\title{On framed rook algebras}
\author[D. Arcis, J. Espinoza and M. Flores]{Diego Arcis\\Jorge Espinoza\\Marcelo Flores}
\address{
	Departamento de Matem\'aticas, Universidad de La Serena, Cisternas 1200 -- 1700000 La Serena, Chile.\textcolor{white}{$\underbrace{1}$}\newline
	Instituto de Matem\'aticas, Universidad de Talca, Campus Norte, Camino Lircay S/N -- 3460000 Talca, Chile.\textcolor{white}{$\underbrace{.}$}\newline
	Instituto de Matem\'aticas, Universidad de Valpara\'iso, Gran Breta\~{n}a 1111 -- 2340000 Valpara\'iso, Chile.}
\email{diego.arcis@userena.cl}
\email{joespinoza@utalca.cl}
\email{marcelo.flores@uv.cl}
\newtheorem{thm}{Theorem}[section]
\newtheorem{crl}[thm]{Corollary}
\newtheorem{lem}[thm]{Lemma}
\newtheorem{pro}[thm]{Proposition}
\theoremstyle{definition}
\newtheorem{exm}[thm]{Example}
\newtheorem{rem}[thm]{Remark}
\def\blue{\color{blue}}\def\red{\color{red}}
\numberwithin{equation}{section}
\renewcommand{\operatorname}{\mathsf}
\newcommand{\F}{\mathcal{F}}
\newcommand{\I}{\mathcal{I}}
\newcommand{\K}{\mathcal{K}}
\newcommand{\Y}{\mathcal{Y}}
\renewcommand{\H}{\mathcal{H}}
\renewcommand{\P}{\mathcal{P}}
\renewcommand{\R}{\mathcal{R}}
\renewcommand{\S}{\mathcal{S}}
\newcommand{\QQ}{\mathcal{Q}}
\newcommand{\RY}{\mathcal{RY}}
\newcommand{\PBr}{\mathcal{P\!B}r}
\newcommand{\CC}{\mathfrak{C}}
\newcommand{\s}{\mathfrak{s}}
\renewcommand{\b}{\mathfrak{b}}
\newcommand{\Fbb}{\mathbb{F}}
\newcommand{\Kbb}{\mathbb{K}}
\newcommand{\Nbb}{\mathbb{N}}
\newcommand{\Sbb}{\mathbb{S}}
\newcommand{\Ebf}{\mathbf{E}}
\newcommand{\Fbf}{\mathbf{F}}
\newcommand{\Gbf}{\mathbf{G}}
\newcommand{\Mbf}{\mathbf{M}}
\newcommand{\Nbf}{\mathbf{N}}
\newcommand{\Tbf}{\mathbf{T}}
\newcommand{\GLbf}{\mathbf{GL}}
\newcommand{\mbf}{\mathbf{m}}
\newcommand{\Csf}{\mathsf{C}}
\newcommand{\Esf}{\mathsf{E}}
\newcommand{\Fsf}{\mathsf{F}}
\newcommand{\Nsf}{\mathsf{N}}
\newcommand{\Psf}{\mathsf{P}}
\newcommand{\Rsf}{\mathsf{R}}
\newcommand{\Tsf}{\mathsf{T}}
\newcommand{\End}{\operatorname{End}}
\newcommand{\Ind}{\operatorname{Ind}}
\newcommand{\dom}{{\operatorname{dom}}}
\newcommand{\inv}{{\operatorname{inv}}}
\newcommand{\supp}{{\operatorname{supp}}}
\newcommand{\codom}{{\operatorname{codom}}}
\newcommand{\arc}[1]{\stackrel{_{_\frown}}{#1}}
\newcommand{\bTsf}{\overline{\mathsf{T}}}
\newcommand{\aspdf}{1}
\newcommand{\vcdraw}[1]{\vcenter{\hbox{#1}}}
\newcommand{\zerc}{\textcolor{gray!50}{0}}
\newcommand{\figscale}{0.9}
\newcommand{\figtwenin}{
	\centering
	\ifnum\aspdf=1\[
		\vcdraw{\includegraphics[scale=\figscale]{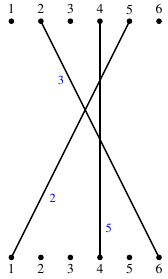}}
		\quad=\quad
		\vcdraw{\includegraphics[scale=\figscale]{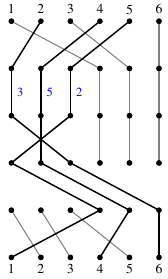}}
	\]\else
		\begin{tikzpicture}
			\vpartition[type=2,height=4,tkzpic=0]{{2,-6},{4,-4},{5,-1}}
			\node at(0.84,3){\blue{$\scriptscriptstyle{3}$}};
			\node at(1.65,0.5){\blue{$\scriptscriptstyle{5}$}};
			\node at(0.7,1){\blue{$\scriptscriptstyle{2}$}};
		\end{tikzpicture}
		\begin{tikzpicture}
			\tie[style=solid,color=black!50,height=0.8]{{1,5},{4,4},{4,3},{4,2}}
			\tie[style=solid,color=black!50,height=0.8]{{3,5},{5,4},{5,3},{5,2}}
			\tie[style=solid,color=black!50,height=0.8]{{6,5},{6,4},{6,3},{6,2}}
			\tie[style=solid,color=black!50,height=0.8]{{1,1},{2,0}}
			\tie[style=solid,color=black!50,height=0.8]{{2,1},{3,0}}
			\tie[style=solid,color=black!50,height=0.8]{{3,1},{5,0}}
			\vpartition[type=-1,tkzpic=0,height=0.8,bulla=0,nstr=6]
			{{4,-1},{5,-4},{6,-6}}
			\vpartition[type=0,tkzpic=0,height=0.8,bulla=0,floor=1,nstr=6]
			{{1,-4},{2,-5},{3,-6}}
			\vpartition[type=0,tkzpic=0,height=0.8,bulla=0,floor=2,nstr=6]
			{{1,-3},{2,-2},{3,-1}}
			\vpartition[type=0,tkzpic=0,height=0.8,bulla=0,floor=3,nstr=6]
			{{1,-1},{2,-2},{3,-3}}
			\vpartition[type=1,tkzpic=0,height=0.8,floor=4,nstr=6]
			{{2,-1},{4,-2},{5,-3}}
			\node at(0.15,2.8){\blue{$\scriptscriptstyle{3}$}};
			\node at(0.65,2.8){\blue{$\scriptscriptstyle{5}$}};
			\node at(1.15,2.8){\blue{$\scriptscriptstyle{2}$}};
		\end{tikzpicture}
	\fi
}
\newcommand{\figtweeig}{
	\centering
	\ifnum\aspdf=1\[
		\omega_A=\vcdraw{\includegraphics[scale=\figscale]{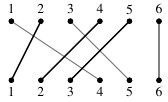}}
		\kern+4em
		\overline{\omega}_A=\vcdraw{\includegraphics[scale=\figscale]{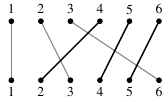}}
	\]\else
		\begin{tikzpicture}
			\tie[style=solid,color=black!50,height=1]{{1,1},{4,0}}
			\tie[style=solid,color=black!50,height=1]{{3,1},{5,0}}
			\tie[style=solid,color=black!50,height=1]{{6,1},{6,0}}
			\vpartition[tkzpic=0,nstr=6,type=2]{{2,-1},{4,-2},{5,-3}}
		\end{tikzpicture}
		\begin{tikzpicture}
			\tie[style=solid,color=black!50,height=1]{{1,1},{1,0}}
			\tie[style=solid,color=black!50,height=1]{{2,1},{3,0}}
			\tie[style=solid,color=black!50,height=1]{{3,1},{6,0}}
			\vpartition[tkzpic=0,nstr=6,type=2]{{4,-2},{5,-4},{6,-5}}
		\end{tikzpicture}
	\fi
}
\newcommand{\figtwesev}{
	\centering
	\ifnum\aspdf=1\[\begin{array}{ccccccc}
		\includegraphics[scale=\figscale]{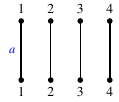}&&
		\includegraphics[scale=\figscale]{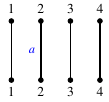}&&
		\includegraphics[scale=\figscale]{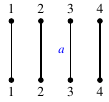}&&
		\includegraphics[scale=\figscale]{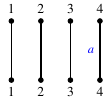}\\[-0.1cm]
		_{\,\,\,\,(1,\,g_1)}&&_{\,(1,\,g_2)}&&_{\,(1,\,g_3)}&&_{\,(1,\,g_4)}
	\end{array}\]\else
		\begin{tikzpicture}
			\vpartition[type=2,tkzpic=0]{{1,-1},{2,-2},{3,-3},{4,-4}}
			\node at(-0.15,0.5){\blue{$\scriptscriptstyle{a}$}};
		\end{tikzpicture}
		\begin{tikzpicture}
			\vpartition[type=2,tkzpic=0]{{1,-1},{2,-2},{3,-3},{4,-4}}
			\node at(0.35,0.5){\blue{$\scriptscriptstyle{a}$}};
		\end{tikzpicture}
		\begin{tikzpicture}
			\vpartition[type=2,tkzpic=0]{{1,-1},{2,-2},{3,-3},{4,-4}}
			\node at(0.85,0.5){\blue{$\scriptscriptstyle{a}$}};
		\end{tikzpicture}
		\begin{tikzpicture}
			\vpartition[type=2,tkzpic=0]{{1,-1},{2,-2},{3,-3},{4,-4}}
			\node at(1.35,0.5){\blue{$\scriptscriptstyle{a}$}};
		\end{tikzpicture}
	\fi\\[-0.35cm]
}
\newcommand{\figtwesix}{
	\centering
	\ifnum\aspdf=1
		\includegraphics[scale=\figscale]{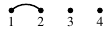}\qquad
		\includegraphics[scale=\figscale]{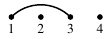}\qquad
		\includegraphics[scale=\figscale]{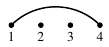}\qquad
		\includegraphics[scale=\figscale]{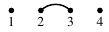}\qquad
		\includegraphics[scale=\figscale]{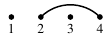}\qquad
		\includegraphics[scale=\figscale]{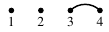}
	\else
		\arcpartition[num=4]{{1,2}}
		\arcpartition[num=4]{{1,3}}
		\arcpartition[num=4]{{1,4}}
		\arcpartition[num=4]{{2,3}}
		\arcpartition[num=4]{{2,4}}
		\arcpartition[num=4]{{3,4}}
	\fi
}
\newcommand{\figtwefiv}{
	\centering
	\ifnum\aspdf=1$
		\vcdraw{\includegraphics[scale=\figscale]{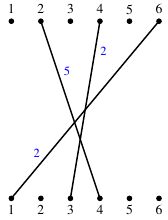}}\,\,\,\,\,=\,\,\,\,\,
		\vcdraw{\includegraphics[scale=\figscale]{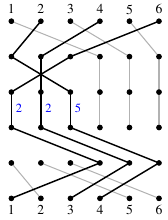}}
	$\else
		\begin{tikzpicture}		
			\vpartition[type=2,height=3,tkzpic=0]{{2,-4},{4,-3},{6,-1}}
			\node at(0.43,0.76){\blue{$\scriptscriptstyle{2}$}};
			\node at(0.94,2.16){\blue{$\scriptscriptstyle{5}$}};
			\node at(1.56,2.49){\blue{$\scriptscriptstyle{2}$}};
		\end{tikzpicture}
		\begin{tikzpicture}
			\tie[style=solid,color=black!30,height=0.6]{{1,5},{4,4},{4,3},{4,2}}
			\tie[style=solid,color=black!30,height=0.6]{{3,5},{5,4},{5,3},{5,2}}
			\tie[style=solid,color=black!30,height=0.6]{{5,5},{6,4},{6,3},{6,2}}
			\tie[style=solid,color=black!30,height=0.6]{{1,1},{2,0}}
			\tie[style=solid,color=black!30,height=0.6]{{2,1},{5,0}}
			\tie[style=solid,color=black!30,height=0.6]{{3,1},{6,0}}
			\vpartition[type=1,height=0.6,tkzpic=0,floor=4]{{2,-1},{4,-2},{6,-3}}
			\vpartition[type=0,height=0.6,tkzpic=0,floor=3,bullb=0,bulla=0]{{1,-3},{2,-2},{3,-1}}
			\vpartition[type=0,height=0.6,tkzpic=0,floor=2,bullb=0,nstr=6]{{1,-1},{2,-2},{3,-3}}
			\vpartition[type=0,height=0.6,tkzpic=0,floor=1,bullb=0]{{1,-4},{2,-5},{3,-6}}
			\vpartition[type=-1,height=0.6,tkzpic=0]{{-1,4},{-3,5},{-4,6}}
			\node at(0.13,1.52){\blue{$\scriptscriptstyle{2}$}};
			\node at(0.63,1.52){\blue{$\scriptscriptstyle{2}$}};
			\node at(1.13,1.52){\blue{$\scriptscriptstyle{5}$}};
		\end{tikzpicture}
	\fi
}
\newcommand{\figtwefou}{
	\centering
	\ifnum\aspdf=1$\begin{array}{cccccc}
		\,\,\includegraphics[scale=\figscale]{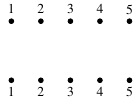}\,\,&
		\,\,\includegraphics[scale=\figscale]{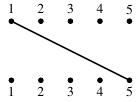}\,\,&
		\,\,\includegraphics[scale=\figscale]{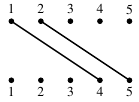}\,\,&
		\,\,\includegraphics[scale=\figscale]{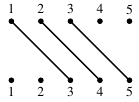}\,\,&
		\,\,\includegraphics[scale=\figscale]{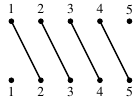}\,\,&
		\,\,\includegraphics[scale=\figscale]{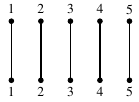}\,\,\\
		\,_{\nu^5\,\,=\,\,\tt00000}&\,_{\nu^4\,\,=\,\,\tt50000}&\,_{\nu^3\,\,=\,\,\tt45000}&\,_{\nu^2\,\,=\,\,\tt34500}&\,_{\nu^1\,\,=\,\,\tt23450}&\,_{\nu^0\,\,=\,\,\tt12345}
	\end{array}$\else
		\vpartition[type=2,nstr=5]{1}
		\vpartition[type=2,nstr=5]{{1,-5}}
		\vpartition[type=2,nstr=5]{{1,-4},{2,-5}}
		\vpartition[type=2,nstr=5]{{1,-3},{2,-4},{3,-5}}
		\vpartition[type=2,nstr=5]{{1,-2},{2,-3},{3,-4},{4,-5}}
		\vpartition[type=2,nstr=5]{{1,-1},{2,-2},{3,-3},{4,-4},{5,-5}}
	\fi
}
\newcommand{\figtwethr}{
	\centering
	\ifnum\aspdf=1
		\includegraphics[scale=\figscale]{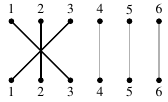}
	\else
		\begin{tikzpicture}
			\tie[style=solid,color=black!30]{{4,1},{4,0}}
			\tie[style=solid,color=black!30]{{5,1},{5,0}}
			\tie[style=solid,color=black!30]{{6,1},{6,0}}
			\vpartition[type=2,tkzpic=0,nstr=6]{{1,-3},{2,-2},{3,-1}}
		\end{tikzpicture}
	\fi
}
\newcommand{\figtwetwo}{
	\centering
	\ifnum\aspdf=1
		\includegraphics[scale=\figscale]{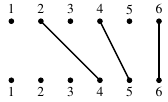}
	\else
		\begin{tikzpicture}
			\vpartition[type=2,tkzpic=0]{{2,-4},{4,-5},{6,-6}}
		\end{tikzpicture}
	\fi
}
\newcommand{\figtweone}{
	\centering
	\ifnum\aspdf=1
		\includegraphics[scale=\figscale]{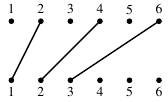}
	\else
		\begin{tikzpicture}
			\vpartition[type=2,tkzpic=0]{{2,-1},{4,-2},{6,-3}}
		\end{tikzpicture}
	\fi
}
\newcommand{\figtwenty}{
	\centering
	\ifnum\aspdf=1
		\includegraphics[scale=\figscale]{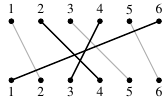}
	\else
		\begin{tikzpicture}
			\tie[style=solid,color=black!30]{{1,1},{2,0}}
			\tie[style=solid,color=black!30]{{3,1},{5,0}}
			\tie[style=solid,color=black!30]{{5,1},{6,0}}
			\vpartition[type=2,tkzpic=0]{{2,-4},{4,-3},{6,-1}}
		\end{tikzpicture}
	\fi
}
\newcommand{\figntn}{
	\centering
	\ifnum\aspdf=1$\begin{array}{ccccccc}
		\,\,\includegraphics[scale=\figscale]{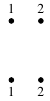}\,\,&
		\,\,\includegraphics[scale=\figscale]{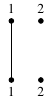}\,\,&
		\,\,\includegraphics[scale=\figscale]{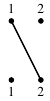}\,\,&
		\,\,\includegraphics[scale=\figscale]{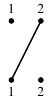}\,\,&
		\,\,\includegraphics[scale=\figscale]{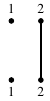}\,\,&
		\,\,\includegraphics[scale=\figscale]{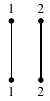}\,\,&
		\,\,\includegraphics[scale=\figscale]{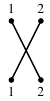}\,\,\\
		\,_{\tt00}&\,_{\tt10}&\,_{\tt20}&\,_{\tt01}&\,_{\tt02}&\,_{\tt12}&\,_{\tt21}
	\end{array}$\else
		\vpartition[type=2,nstr=2]{{1}}
		\vpartition[type=2,nstr=2]{{1,-1}}
		\vpartition[type=2,nstr=2]{{1,-2}}
		\vpartition[type=2,nstr=2]{{2,-1}}
		\vpartition[type=2,nstr=2]{{2,-2}}
		\vpartition[type=2,nstr=2]{{1,-1},{2,-2}}
		\vpartition[type=2,nstr=2]{{1,-2},{2,-1}}
	\fi
}
\newcommand{\figetn}{
	\centering
	\ifnum\aspdf=1$\begin{array}{ccccc}
		\,\,\includegraphics[scale=\figscale]{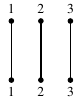}\,\,&
		\,\,\includegraphics[scale=\figscale]{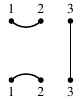}\,\,&
		\,\,\includegraphics[scale=\figscale]{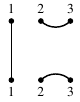}\,\,&
		\,\,\includegraphics[scale=\figscale]{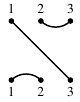}\,\,&
		\,\,\includegraphics[scale=\figscale]{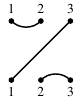}\,\,
	\end{array}$\else
		\vpartition[type=2]{{1,-1},{2,-2},{3,-3}}
		\vpartition[type=2]{{1,2},{-1,-2},{3,-3}}
		\vpartition[type=2]{{1,-1},{2,3},{-2,-3}}
		\vpartition[type=2]{{1,-3},{2,3},{-1,-2}}
		\vpartition[type=2]{{3,-1},{2,1},{-3,-2}}
	\fi
}
\newcommand{\figsvn}{
	\centering
	\ifnum\aspdf=1$\begin{array}{cccccc}
		\,\,\includegraphics[scale=\figscale]{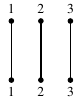}\,\,&
		\,\,\includegraphics[scale=\figscale]{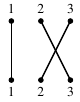}\,\,&
		\,\,\includegraphics[scale=\figscale]{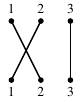}\,\,&
		\,\,\includegraphics[scale=\figscale]{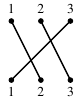}\,\,&
		\,\,\includegraphics[scale=\figscale]{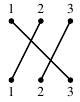}\,\,&
		\,\,\includegraphics[scale=\figscale]{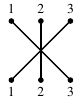}\,\,\\
		\,_{\tt123}&\,_{\tt132}&\,_{\tt213}&\,_{\tt231}&\,_{\tt312}&\,_{\tt321}
	\end{array}$\else
		\permutation{1,2,3}
		\permutation{1,3,2}
		\permutation{2,1,3}
		\permutation{2,3,1}
		\permutation{3,1,2}
		\permutation{3,2,1}
	\fi
}
\newcommand{\figsxn}{
	\centering
	\ifnum\aspdf=1
		\includegraphics[scale=\figscale]{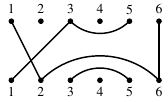}
	\else
		\vpartition[type=2]{{-1,3,5},{1,-2,-6,6},{-3,-5}}
	\fi
}
\newcommand{\figffn}{
	\centering
	\ifnum\aspdf=1$
		\vcdraw{\includegraphics[scale=\figscale]{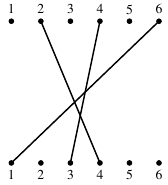}}\,\,\,\,\,=\,\,\,\,\,
		\vcdraw{\includegraphics[scale=\figscale]{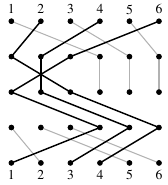}}
	$\else
		\vpartition[type=2,height=2.4]{{2,-4},{4,-3},{6,-1}}
		\begin{tikzpicture}
			\tie[style=solid,color=blue,height=0.6]{{1,4},{4,3},{4,2}}
			\tie[style=solid,color=blue,height=0.6]{{3,4},{5,3},{5,2}}
			\tie[style=solid,color=blue,height=0.6]{{5,4},{6,3},{6,2}}
			\tie[style=solid,color=blue,height=0.6]{{1,1},{2,0}}
			\tie[style=solid,color=blue,height=0.6]{{2,1},{5,0}}
			\tie[style=solid,color=blue,height=0.6]{{3,1},{6,0}}
			\vpartition[type=1,height=0.6,tkzpic=0,floor=3]{{2,-1},{4,-2},{6,-3}}
			\vpartition[type=0,height=0.6,tkzpic=0,floor=2,bullb=0,bulla=0]{{1,-3},{2,-2},{3,-1}}
			\vpartition[type=0,height=0.6,tkzpic=0,floor=1,bullb=0]{{1,-4},{2,-5},{3,-6}}
			\vpartition[type=-1,height=0.6,tkzpic=0]{{-1,4},{-3,5},{-4,6}}
		\end{tikzpicture}
		\begin{tikzpicture}
			\tie[style=solid,color=black!30,height=0.6]{{1,4},{4,3},{4,2}}
			\tie[style=solid,color=black!30,height=0.6]{{3,4},{5,3},{5,2}}
			\tie[style=solid,color=black!30,height=0.6]{{5,4},{6,3},{6,2}}
			\tie[style=solid,color=black!30,height=0.6]{{1,1},{2,0}}
			\tie[style=solid,color=black!30,height=0.6]{{2,1},{5,0}}
			\tie[style=solid,color=black!30,height=0.6]{{3,1},{6,0}}
			\vpartition[type=1,height=0.6,tkzpic=0,floor=3]{{2,-1},{4,-2},{6,-3}}
			\vpartition[type=0,height=0.6,tkzpic=0,floor=2,bullb=0,bulla=0]{{1,-3},{2,-2},{3,-1}}
			\vpartition[type=0,height=0.6,tkzpic=0,floor=1,bullb=0]{{1,-4},{2,-5},{3,-6}}
			\vpartition[type=-1,height=0.6,tkzpic=0]{{-1,4},{-3,5},{-4,6}}
		\end{tikzpicture}
	\fi
}
\newcommand{\figftn}{
	\centering
	\ifnum\aspdf=1
		\includegraphics[scale=\figscale]{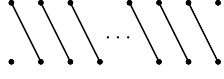}
	\else
		\begin{tikzpicture}
			\vpartition[type=0,nstr=5,bulla=0,bullb=0,tkzpic=0]{{1,-2},{2,-3},{3,-4},{5,-6},{6,-7},{7,-8}}
			\tie[color=black,style=solid]{{1,1},{1,1}}
			\tie[color=black,style=solid]{{2,1},{2,1}}
			\tie[color=black,style=solid]{{3,1},{3,1}}
			\tie[color=black,style=solid]{{5,1},{5,1}}
			\tie[color=black,style=solid]{{6,1},{6,1}}
			\tie[color=black,style=solid]{{7,1},{7,1}}
			\tie[color=black,style=solid]{{8,1},{8,1}}
			\tie[color=black,style=solid]{{1,0},{1,0}}
			\tie[color=black,style=solid]{{2,0},{2,0}}
			\tie[color=black,style=solid]{{3,0},{3,0}}
			\tie[color=black,style=solid]{{4,0},{4,0}}
			\tie[color=black,style=solid]{{6,0},{6,0}}
			\tie[color=black,style=solid]{{7,0},{7,0}}
			\tie[color=black,style=solid]{{8,0},{8,0}}
			\node at(1.84,0.4){$\cdots$};
		\end{tikzpicture}
	\fi\\[-0.3cm]
}
\newcommand{\figttn}{
	\centering
	\ifnum\aspdf=1\[
	\begin{array}{rclll}
		\begin{array}{c}\begin{psmallmatrix}
			\zerc&\zerc&\zerc&\zerc&\zerc\\
			\zerc&\zerc&\zerc&\zerc&1\\
			1&\zerc&\zerc&\zerc&\zerc\\
			\zerc&\zerc&1&\zerc&\zerc\\
			\zerc&\zerc&\zerc&\zerc&\zerc
		\end{psmallmatrix}\\[-0.1cm]_{\tt05130}\\[-0.11cm]\end{array}
		\vcdraw{\includegraphics{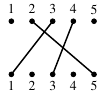}}
		&\stackrel{s_1}{\longrightarrow}&
		\begin{array}{c}\begin{psmallmatrix}
			\zerc&\zerc&\zerc&\zerc&1\\
			\zerc&\zerc&\zerc&\zerc&\zerc\\
			1&\zerc&\zerc&\zerc&\zerc\\
			\zerc&\zerc&1&\zerc&\zerc\\
			\zerc&\zerc&\zerc&\zerc&\zerc
		\end{psmallmatrix}\\[-0.1cm]_{\tt{\red50}130}\\[-0.11cm]\end{array}
		\vcdraw{\includegraphics{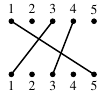}}
		&\stackrel{s_2}{\longrightarrow}&
		\begin{array}{c}\begin{psmallmatrix}
			\zerc&\zerc&\zerc&\zerc&1\\
			1&\zerc&\zerc&\zerc&\zerc\\
			\zerc&\zerc&\zerc&\zerc&\zerc\\
			\zerc&\zerc&1&\zerc&\zerc\\
			\zerc&\zerc&\zerc&\zerc&\zerc
		\end{psmallmatrix}\\[-0.1cm]_{\tt5{\red10}30}\\[-0.11cm]\end{array}
		\vcdraw{\includegraphics{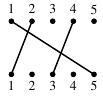}}\\[0.9cm]
		&\stackrel{s_3}{\longrightarrow}&
		\begin{array}{c}\begin{psmallmatrix}
			\zerc&\zerc&\zerc&\zerc&1\\
			1&\zerc&\zerc&\zerc&\zerc\\
			\zerc&\zerc&1&\zerc&\zerc\\
			\zerc&\zerc&\zerc&\zerc&\zerc\\
			\zerc&\zerc&\zerc&\zerc&\zerc
		\end{psmallmatrix}\\[-0.1cm]_{\tt51{\red30}0}\\[-0.11cm]\end{array}		
		\vcdraw{\includegraphics{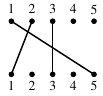}}
		&\stackrel{s_3}{\longrightarrow}&
		\begin{array}{c}\begin{psmallmatrix}
			\zerc&\zerc&\zerc&\zerc&1\\
			1&\zerc&\zerc&\zerc&\zerc\\
			\zerc&\zerc&\zerc&1&\zerc\\
			\zerc&\zerc&\zerc&\zerc&\zerc\\
			\zerc&\zerc&\zerc&\zerc&\zerc
		\end{psmallmatrix}\\[-0.1cm]_{\tt51{\blue40}0}\\[-0.11cm]\end{array}
		\vcdraw{\includegraphics{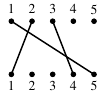}}\\[0.9cm]
		&\stackrel{s_1}{\longrightarrow}&
		\begin{array}{c}\begin{psmallmatrix}
			\zerc&\zerc&\zerc&\zerc&1\\
			\zerc&1&\zerc&\zerc&\zerc\\
			\zerc&\zerc&\zerc&1&\zerc\\
			\zerc&\zerc&\zerc&\zerc&\zerc\\
			\zerc&\zerc&\zerc&\zerc&\zerc
		\end{psmallmatrix}\\[-0.1cm]_{\tt{\blue52}400}\\[-0.11cm]\end{array}
		\vcdraw{\includegraphics{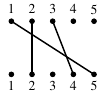}}
		&\stackrel{s_2}{\longrightarrow}&
		\begin{array}{c}\begin{psmallmatrix}
			\zerc&\zerc&\zerc&\zerc&1\\
			\zerc&\zerc&1&\zerc&\zerc\\
			\zerc&\zerc&\zerc&1&\zerc\\
			\zerc&\zerc&\zerc&\zerc&\zerc\\
			\zerc&\zerc&\zerc&\zerc&\zerc
		\end{psmallmatrix}\\[-0.1cm]_{\tt5{\blue34}00}\\[-0.11cm]\end{array}
		\vcdraw{\includegraphics{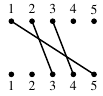}}\\[0.9cm]
		&\stackrel{s_1}{\longrightarrow}&
		\begin{array}{c}\begin{psmallmatrix}
			\zerc&\zerc&1&\zerc&\zerc\\
			\zerc&\zerc&\zerc&\zerc&1\\
			\zerc&\zerc&\zerc&1&\zerc\\
			\zerc&\zerc&\zerc&\zerc&\zerc\\
			\zerc&\zerc&\zerc&\zerc&\zerc
		\end{psmallmatrix}\\[-0.1cm]_{\tt{\red35}400}\\[-0.11cm]\end{array}
		\vcdraw{\includegraphics{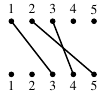}}
		&\stackrel{s_2}{\longrightarrow}&
		\begin{array}{c}\begin{psmallmatrix}
			\zerc&\zerc&1&\zerc&\zerc\\
			\zerc&\zerc&\zerc&1&\zerc\\
			\zerc&\zerc&\zerc&\zerc&1\\
			\zerc&\zerc&\zerc&\zerc&\zerc\\
			\zerc&\zerc&\zerc&\zerc&\zerc
		\end{psmallmatrix}\\[-0.1cm]_{\tt3{\red45}00}\\[-0.11cm]\end{array}
		\vcdraw{\includegraphics{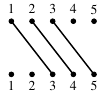}}
	\end{array}
	\]\else
		\begin{tikzpicture}
			\vpartition[tkzpic=0,type=2,nstr=5,width=0.35,height=0.9]{{2,-5},{3,-1},{4,-3}}
		\end{tikzpicture}
		\begin{tikzpicture}
			\vpartition[tkzpic=0,type=2,nstr=5,width=0.35,height=0.9]{{1,-5},{3,-1},{4,-3}}
		\end{tikzpicture}
		\begin{tikzpicture}
			\vpartition[tkzpic=0,type=2,nstr=5,width=0.35,height=0.9]{{1,-5},{2,-1},{4,-3}}
		\end{tikzpicture}
		\begin{tikzpicture}
			\vpartition[tkzpic=0,type=2,nstr=5,width=0.35,height=0.9]{{1,-5},{2,-1},{3,-3}}
		\end{tikzpicture}
		\begin{tikzpicture}
			\vpartition[tkzpic=0,type=2,nstr=5,width=0.35,height=0.9]{{1,-5},{2,-1},{3,-4}}
		\end{tikzpicture}
		\begin{tikzpicture}
			\vpartition[tkzpic=0,type=2,nstr=5,width=0.35,height=0.9]{{1,-5},{2,-2},{3,-4}}
		\end{tikzpicture}
		\begin{tikzpicture}
			\vpartition[tkzpic=0,type=2,nstr=5,width=0.35,height=0.9]{{1,-5},{2,-3},{3,-4}}
		\end{tikzpicture}
		\begin{tikzpicture}
			\vpartition[tkzpic=0,type=2,nstr=5,width=0.35,height=0.9]{{2,-5},{1,-3},{3,-4}}
		\end{tikzpicture}
		\begin{tikzpicture}
			\vpartition[tkzpic=0,type=2,nstr=5,width=0.35,height=0.9]{{3,-5},{1,-3},{2,-4}}
		\end{tikzpicture}
	\fi\\[-0.3cm]
}
\newcommand{\figtwe}{
	\centering
	\ifnum\aspdf=1\[
		\scalebox{0.8}{$\begin{pmatrix}
			\zerc&\zerc&\zerc&\zerc&\zerc\\\zerc&\zerc&\zerc&\zerc&2\\5&\zerc&\zerc&\zerc&\zerc\\\zerc&\zerc&3&\zerc&\zerc\\\zerc&\zerc&\zerc&\zerc&\zerc
		\end{pmatrix}$}\quad\mapsto\quad
		\!\!\!\begin{array}{c}
			\vcdraw{\includegraphics{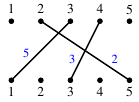}}\\\,_{\tt02530-05130}\\[-0.1cm]
		\end{array}
	\]\else
		\begin{tikzpicture}
			\vpartition[type=2,tkzpic=0]{{2,-5},{3,-1},{4,-3}}
			\node at(0.25,0.45){\blue{$\scriptscriptstyle{5}$}};
			\node at(1.03,0.35){\blue{$\scriptscriptstyle{3}$}};
			\node at(1.75,0.35){\blue{$\scriptscriptstyle{2}$}};
		\end{tikzpicture}
	\fi\\[-0.3cm]
}
\newcommand{\figele}{
	\centering
	\ifnum\aspdf=1\[\begin{array}{cccccc}
		\begin{psmallmatrix}
			\zerc&\zerc&\zerc&\zerc&\zerc\\
			\zerc&\zerc&\zerc&\zerc&\zerc\\
			\zerc&\zerc&\zerc&\zerc&\zerc\\
			\zerc&\zerc&\zerc&\zerc&\zerc\\
			\zerc&\zerc&\zerc&\zerc&\zerc
		\end{psmallmatrix}
		&
		\begin{psmallmatrix}
			\zerc&\zerc&\zerc&\zerc&1\\
			\zerc&\zerc&\zerc&\zerc&\zerc\\
			\zerc&\zerc&\zerc&\zerc&\zerc\\
			\zerc&\zerc&\zerc&\zerc&\zerc\\
			\zerc&\zerc&\zerc&\zerc&\zerc
		\end{psmallmatrix}
		&
		\begin{psmallmatrix}
			\zerc&\zerc&\zerc&1&\zerc\\
			\zerc&\zerc&\zerc&\zerc&1\\
			\zerc&\zerc&\zerc&\zerc&\zerc\\
			\zerc&\zerc&\zerc&\zerc&\zerc\\
			\zerc&\zerc&\zerc&\zerc&\zerc
		\end{psmallmatrix}
		&
		\begin{psmallmatrix}
			\zerc&\zerc&1&\zerc&\zerc\\
			\zerc&\zerc&\zerc&1&\zerc\\
			\zerc&\zerc&\zerc&\zerc&1\\
			\zerc&\zerc&\zerc&\zerc&\zerc\\
			\zerc&\zerc&\zerc&\zerc&\zerc
		\end{psmallmatrix}
		&
		\begin{psmallmatrix}
			\zerc&1&\zerc&\zerc&\zerc\\
			\zerc&\zerc&1&\zerc&\zerc\\
			\zerc&\zerc&\zerc&1&\zerc\\
			\zerc&\zerc&\zerc&\zerc&1\\
			\zerc&\zerc&\zerc&\zerc&\zerc
		\end{psmallmatrix}
		&
		\begin{psmallmatrix}
			1&\zerc&\zerc&\zerc&\zerc\\
			\zerc&1&\zerc&\zerc&\zerc\\
			\zerc&\zerc&1&\zerc&\zerc\\
			\zerc&\zerc&\zerc&1&\zerc\\
			\zerc&\zerc&\zerc&\zerc&1
		\end{psmallmatrix}
		\\[0.2cm]_{\nu_0\,=\,\nu^5}&_{\nu_1\,=\,\nu^4}&_{\nu_2\,=\,\nu^3}&_{\nu_3\,=\,\nu^2}&_{\nu_4\,=\,\nu^1}&_{\nu_5\,=\,\nu^0}\\[0.2cm]
	\end{array}\]\else
		\vpartition[type=2,nstr=5,width=0.35,height=0.9]{1}
		\vpartition[type=2,nstr=5,width=0.35,height=0.9]{{1,-5}}
		\vpartition[type=2,nstr=5,width=0.35,height=0.9]{{1,-4},{2,-5}}
		\vpartition[type=2,nstr=5,width=0.35,height=0.9]{{1,-3},{2,-4},{3,-5}}
		\vpartition[type=2,nstr=5,width=0.35,height=0.9]{{1,-2},{2,-3},{3,-4},{4,-5}}
		\vpartition[type=2,nstr=5,width=0.35,height=0.9]{{1,-1},{2,-2},{3,-3},{4,-4},{5,-5}}
	\fi\\[-0.3cm]
}
\newcommand{\figten}{
	\centering
	\ifnum\aspdf=1\[
		\scalebox{0.8}{$\begin{pmatrix}
			\zerc&\zerc&\zerc&\zerc&\zerc\\\zerc&\zerc&\zerc&\zerc&1\\1&\zerc&\zerc&\zerc&\zerc\\\zerc&\zerc&1&\zerc&\zerc\\\zerc&\zerc&\zerc&\zerc&\zerc
		\end{pmatrix}$}\quad\mapsto\quad
		\!\!\!\begin{array}{cc}
			\vcdraw{\includegraphics{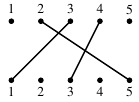}}\\\,_{\tt05130}\\[-0.1cm]
		\end{array}
	\]\else
		\vpartition[type=2]{{2,-5},{3,-1},{4,-3}}
	\fi\\[-0.3cm]
}
\newcommand{\fignin}{
	\centering
	\ifnum\aspdf=1\[
		\vcdraw{\includegraphics{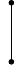}}\cdots
		\vcdraw{\includegraphics{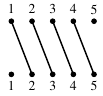}}\,\,\cdots
		\vcdraw{\includegraphics{pics/line.pdf}}
	\]\else
		\vpartition[type=0]{{1,-1}}
		\begin{tikzpicture}
			\vpartition[type=0,bend=60,tkzpic=0]{{1,-1},{2,3},{-2,-3},{4,-4}}
			\tie[snake=true,snakelen=3,style=solid]{{2.5,0.15},{2.5,0.85}}
		\end{tikzpicture}
	\fi
}
\newcommand{\figeig}{
	\centering
	\ifnum\aspdf=1
		\includegraphics[scale=\figscale]{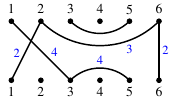}
	\else
		\begin{tikzpicture}
			\vpartition[type=2,tkzpic=0]{{-1,2,6,-6},{1,-3,-5},{3,5}}
			\node at(0.09,0.45){\blue{$\scriptscriptstyle{2}$}};
			\node at(0.73,0.48){\blue{$\scriptscriptstyle{4}$}};
			\node at(1.5,0.34){\blue{$\scriptscriptstyle{4}$}};
			\node at(2,0.52){\blue{$\scriptscriptstyle{3}$}};
			\node at(2.61,0.5){\blue{$\scriptscriptstyle{2}$}};
		\end{tikzpicture}
	\fi
}
\newcommand{\figsev}{
	\centering
	\ifnum\aspdf=1
		\includegraphics[scale=\figscale]{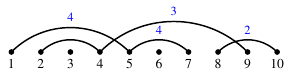}
	\else
		\begin{tikzpicture}
			\arcpartition[tkzpic=0]{{1,5,7},{2,4,9},{8,10}}
			\node at(1,0.59){\blue{$\scriptscriptstyle{2}$}};
			\node at(2.75,0.68){\blue{$\scriptscriptstyle{3}$}};
			\node at(2.5,0.36){\blue{$\scriptscriptstyle{2}$}};
			\node at(4,0.37){\blue{$\scriptscriptstyle{1}$}};
		\end{tikzpicture}
	\fi
}
\newcommand{\figsix}{
	\centering
	\ifnum\aspdf=1$
		\vcdraw{\includegraphics[scale=\figscale]{pics/line.pdf}}\cdots
		\vcdraw{\includegraphics[scale=\figscale]{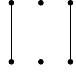}}\,\,\cdots
		\vcdraw{\includegraphics[scale=\figscale]{pics/line.pdf}}
	$\else
		\vpartition[type=0]{{1,-1},{3,-3}}
	\fi
}
\newcommand{\figfiv}{
	\centering
	\ifnum\aspdf=1$
		\vcdraw{\includegraphics[scale=\figscale]{pics/line.pdf}}\cdots
		\vcdraw{\includegraphics[scale=\figscale]{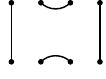}}\,\,\cdots
		\vcdraw{\includegraphics[scale=\figscale]{pics/line.pdf}}
	$\else
		\vpartition[type=0]{{1,-1},{2,3},{-2,-3},{4,-4}}
	\fi
}
\newcommand{\figfou}{
	\centering
	\ifnum\aspdf=1$
		\vcdraw{\includegraphics[scale=\figscale]{pics/line.pdf}}\cdots
		\vcdraw{\includegraphics[scale=\figscale]{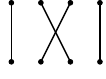}}\,\,\cdots
		\vcdraw{\includegraphics[scale=\figscale]{pics/line.pdf}}
	$\else
		\vpartition[type=0]{{1,-1}}
		\vpartition[type=0]{{1,-1},{2,-3},{3,-2},{4,-4}}
	\fi
}
\newcommand{\figthr}{
	\centering
	\ifnum\aspdf=1$
		\vcdraw{\includegraphics[scale=\figscale]{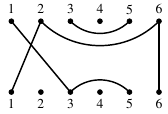}}\,\,\,\ast\,
		\vcdraw{\includegraphics[scale=\figscale]{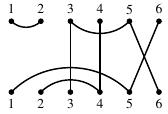}}\,\,\,=\,\,
		\vcdraw{\includegraphics[scale=\figscale]{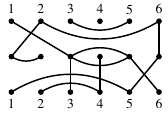}}\,\,\,=\,\,
		\vcdraw{\includegraphics[scale=\figscale]{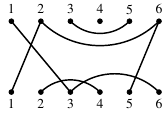}}
	$\else
		\vpartition[height=1.2,type=2]{{-1,2,6,-6},{1,-3,-5},{3,5}}
		\vpartition[height=1.2,type=2]{{-3,3,5,-6},{6,-5,-1},{-2,-4,4},{1,2}}
		\begin{tikzpicture}
			\vpartition[height=0.6,floor=1,tkzpic=0,bullb=0,type=1,bend=31]{{-1,2,6,-6},{1,-3,-5},{3,5}}
			\vpartition[height=0.6,tkzpic=0,type=-1,bend=31]{{-3,3,5,-6},{6,-5,-1},{-2,-4,4},{1,2}}
		\end{tikzpicture}
		\vpartition[height=1.2,type=2]{{1,-3,-6},{-1,2,6,-5},{3,5},{-2,-4}}
	\fi
}
\newcommand{\figtwo}{
	\centering
	\ifnum\aspdf=1$
		\vcdraw{\includegraphics[scale=\figscale]{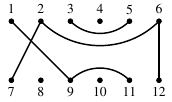}}\,\,\,\to\,\,
		\vcdraw{\includegraphics[scale=\figscale]{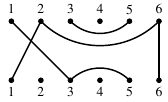}}
	$\else
		\vpartition{{-1,2,6,-6},{1,-3,-5},{3,5}}
		\vpartition[type=2]{{-1,2,6,-6},{1,-3,-5},{3,5}}
	\fi
}
\newcommand{\figone}{
	\centering
	\ifnum\aspdf=1
		\includegraphics[scale=\figscale]{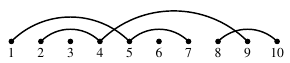}
	\else
		\arcpartition{{1,5,7},{2,4,9},{8,10}}
	\fi
}
\newcommand{\figzer}{
	\centering
	\ifnum\aspdf=1
		\includegraphics[scale=\figscale]{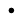}\qquad
		\includegraphics[scale=\figscale]{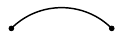}
	\else
		\arcpartition[num=1,type=0]{1}
		\arcpartition[num=1,type=0,width=1.7]{{1,2}}
	\fi
}
\begin{document}

\begin{abstract}
We introduce and study the framed rook algebra, a structure that unifies two significant generalizations of the Iwahori--Hecke algebra. The first one, introduced by Solomon, extends the Hecke algebra to the full matrix monoid, yielding the rook monoid algebra. The second one, developed by Yokonuma, replaces the Borel subgroup with the unipotent subgroup, resulting in the Yokonuma--Hecke algebra. Our concrete algebra is constructed from the double cosets of the unipotent subgroup within the full matrix monoid. We show that this double coset decomposition is indexed by the framed symmetric inverse monoid. We also define the Rook Yokonuma--Hecke algebra as an abstract structure using generators and relations. We then prove the main isomorphism theorem, which establishes that this abstract algebra is isomorphic to the framed rook algebra under a specific parameter specialization. To complete our characterization, we provide a faithful representation on a tensor space and establish a linear basis for the Rook Yokonuma--Hecke algebra.
\end{abstract}

\maketitle


\section{Introduction}

The Iwahori--Hecke algebra of type $A_{n-1}$, denoted by $\H_n(u)$, is a central object in algebraic combinatorics, representation theory, and knot theory. Originally, it arose from the study of finite Chevalley groups \cite{Iw64}. Classically, let $G=\GLbf_n(\Fbb_q)$ be the general linear group over a finite field $\Fbb_q$, and let $B$ be the standard Borel subgroup of upper triangular matrices. The algebra $\H_n(u)$ can be realized as the endomorphism algebra $\End_G(\Ind_B^G(1))$ of the permutation representation of $G$ on the cosets of $B$ \cite{CuRe81}. A fundamental result, based on the Bruhat decomposition of $G$, establishes that the double cosets of $B$ in $G$ are indexed by the Weyl group $W\simeq\S_n$, the symmetric group on $n$ letters \cite{Bru56,Che55}.

This classical construction has been generalized in two distinct but significant directions over the last decades.

The first direction involves extending the underlying algebraic structure from a group to a monoid \cite{Ren86}. In his seminal works, Solomon investigated the Iwahori algebra associated with the full monoid of $n\times n$ matrices, $M:=\Mbf_n(\Fbb_q)$, rather than the group of units $G$ \cite{So90}. He considered the double cosets of the Borel subgroup $B$ within $M$ and demonstrated that the decomposition $M=\bigsqcup_\sigma B\sigma B$ is indexed by the \emph{rook monoid} $\R$, also known as the symmetric inverse monoid $\I_n$ \cite{Mnn57}. The resulting algebra, often referred to as the rook monoid algebra, serves as a semigroup analogue to the Hecke algebra and has been extensively studied for its representation theory and combinatorial properties \cite{So90,So02,HaRa04,BiRaYi12,HaDe14}. In \cite{So04}, Solomon gives a presentation for the rook monoid algebra and provides a tensor representation for such algebra. Subsequently, in \cite{Ha04}, Halverson provides an alternative presentation for this algebra and studies its representation theory.

The second direction involves modifying the observing subgroup. In 1967, Yokonuma introduced a generalization of the Hecke algebra by replacing the Borel subgroup $B$ with its unipotent radical $U$, where $B=UT$ and $T$ is the maximal torus of diagonal matrices \cite{Yo67}. The resulting algebra, known as the \emph{Yokonuma--Hecke algebra} $\Y_{d,n}(u)$, arises from the double cosets in $G$ \cite{Thi04}. Unlike the Iwahori--Hecke algebra, the Yokonuma--Hecke algebra supports a modular structure related to the "framing" of the underlying braid groups, which has led to significant applications in representation theory \cite{Ju98,ChPo14,DaDou17,EsRyH18} and knot theory. More precisely, the Iwahori--Hecke algebra is used to derive the well known HOMFLY--PT polynomial \cite{Jn87}. On the other hand, the Yokonuma--Hecke algebra can also be used to derive invariants of classical and framed links \cite{Ju04,ChLa13,ChJaKaLa16,GoLa17,CJKL18}. It is worth noting that the invariant obtained from $\Y_{d,n}(u)$ distinguishes pairs of links that are  not distinguished by the HOMFLY--PT polynomial.

Motivated for this, Juyumaya and Lambropoulou introduce the notion of \emph{framization of an algebra} in \cite{JuLa15}, which consists in adding framing generators to the algebra with the aim of finding new and powerful invariants of classical links by using the algebra obtained from this procedure. The Yokonuma--Hecke algebra is the prototype of a framization; indeed this algebra can be thought of as a framization of the Iwahori--Hecke algebra $\H_n(u)$. A more formal definition of this procedure was given recently in \cite{AiJuPa25}.

In this paper, we propose a synthesis of these two generalizations. We construct and study the algebra arising from the double cosets of the unipotent subgroup $U$ within the full matrix monoid $M$. We refer to this concrete structure as the \emph{framed rook algebra}, denoted by $H(M,U)$. Furthermore, we introduce an abstract algebra version, defined by generators and relations, which we call the \emph{Rook Yokonuma--Hecke algebra}, denoted by $\RY_{d,n}(u)$. This new algebra can be thought as a framization of the rook monoid algebra in the sense of \cite{AiJuPa25}. In particular, the algebra $\RY_{d,n}(u)$ generalizes both the rook monoid algebra and the Yokonuma--Hecke algebra.

Our approach is motivated by the observation that the transition from the Borel subgroup $B$ to the unipotent subgroup $U$ introduces an action of the torus $T\simeq(\Fbb_q^\times)^n$ on the double cosets $U\sigma U$. This action effectively "colors" or "frames" the combinatorial objects that index them. While Solomon's construction relies on the rook monoid $\R$, we show that the decomposition of the full matrix monoid $M$ into double cosets is naturally indexed by a generalized version of this monoid, which we call the \emph{generalized rook monoid} $\QQ$ (Subsection~\ref{148}). As we establish, $\QQ$ is isomorphic to a combinatorial structure given by coloured set partitions, which we call the \emph{framed symmetric inverse monoid} $\F_q(\I_n)$ (Subsection~\ref{149}). Geometrically, elements of this monoid can be visualized as partial permutations labelled by elements of the cyclic group $\Fbb_q^\times$.

The main contributions of this paper are as follows:

\begin{enumerate}

\item We explicitly define the framed rook algebra $H(M,U)$ as a convolution algebra of $U$-invariant functions on $M$, providing a basis indexed by the generalized rook monoid $\QQ$.

\item We introduce an abstract algebra by generators and relations, which we call the \emph{Rook Yokonuma--Hecke algebra}, denoted by $\RY_{d,n}(u)$. This presentation unifies the defining relations of the classical Yokonuma--Hecke algebra with the idempotent and projection relations characteristic of rook monoid algebras.

\item We prove a main isomorphism theorem (Theorem~\ref{057}), establishing that the concrete algebra $H(M,U)$ is isomorphic to the abstract algebra $\RY_{d,n}(u)$ under the specific parameter specialization $d=q-1$ and $u=q$. This result parallels Solomon's characterization of the Iwahori algebra of $M$.

\item Finally, we construct a faithful tensor space representation for the Rook Yokonuma--Hecke algebra (Theorem~\ref{151}), extending the classical tensor representations of the rook monoid algebra. Furthermore, we establish a linear basis for the Rook Yokonuma--Hecke algebra (Theorem~\ref{163}).
\end{enumerate}

The paper is organized as follows. Section~\ref{152} establishes the combinatorial framework necessary for our construction. We review partition monoids (Subsection~\ref{118}) and formally introduce the concept of framed monoids (Subsection~\ref{157}). Specifically, we define the framed symmetric inverse monoid (Subsection~\ref{149}), denoted by $\F_q(\I_n)$, and provide a presentation by generators and relations (Proposition~\ref{025}).

Section~\ref{153} focuses on the matrix interpretation of these combinatorial structures. We recall the classic rook monoid (Subsection~\ref{158}) and define the generalized rook monoid $\QQ$ as a submonoid of the full matrix monoid $M$ (Subsection~\ref{158}). A key observation in this section is the establishment of an explicit isomorphism between $\QQ$ and the framed symmetric inverse monoid $\F_q(\I_n)$, which allows us to translate geometric properties of diagrams into matrix operations. Furthermore, we analyze the double coset decomposition of $M$ with respect to the unipotent subgroup $U$ (Subsection~\ref{160}), showing that these double cosets are naturally indexed by elements of $\QQ$. We also study the length function and the structure of double classes (Subsection~\ref{159}). We extend the classical length function of the rook monoid to the generalized setting and analyze the convolution product of double cosets, providing explicit multiplication formulas that are crucial for the algebra structure.

Section~\ref{154} contains the main results of the paper. First, we define the concrete framed rook algebra $H(M,U)$ as the algebra of $U$-bi-invariant functions on $M$. Then, we introduce the Rook Yokonuma--Hecke algebra $\RY_{d,n}$ as an abstract algebra defined by generators and relations (Subsection~\ref{007}). We prove our main isomorphism theorem, demonstrating that $H(M,U)$ is isomorphic to the specialization of $\RY_{d,n}(u)$ at $d=q-1$ and $u=q$. We construct a faithful tensor space representation for $\RY_{d,n}$ and use it to establish a standard basis for the algebra, proving that its dimension is equal to the cardinality of the generalized rook monoid (Subsection~\ref{161}). Finally we give some other presentations for the algebra $\RY_{d,n}$ (Subsection~\ref{162}).

\subsubsection*{Notation and conventions}

Throughout this paper, let $\Fbb_q$ denote the finite field with $q$ elements, where $q$ is a power of an odd prime. We recall that the multiplicative group $\Fbb_q^\times$ is cyclic of order $q-1$, that is, $\Fbb_q^\times=\langle a\mid a^{\,q-1}=1\rangle$ for some $a\in\Fbb_q^\times$ (see \cite[Chapter V, Section 5]{Hun11}). We denote by $\Nbb$ the set of positive integers, and for any set $A$, we define $A_0:=A\cup\{0\}$. For integers $m,n\in\Z$, the discrete interval $\{m,\ldots,n\}$ is denoted by $[m,n]$, and we use the abbreviation $[n]$ for $[1,n]$. Furthermore, given a binary relation $R$ on a semigroup, we denote its congruence closure by $\bar{R}$.

\section{Framed monoids}\label{152}

In this section, we establish the combinatorial framework necessary to construct the framed rook algebra. Our approach begins with a review of set partitions and the partition monoid $\CC_n$ emphasizing their diagrammatic representation via strand diagrams. We recall the concept of coloured set partitions and introduce framed monoids by attaching weights from the cyclic group $\Fbb_q^\times$ to the arcs of these diagrams. This construction leads to the definition of the framed symmetric inverse monoid, denoted by $\F_q(\I_n)$. We conclude the section by providing a presentation of $\F_q(\I_n)$ via generators and relations, a result that is crucial for establishing the isomorphism theorem in Section~\ref{154}.

Throughout this section, let $A$ and $B$ denote nonempty sets of integers and $n$ a positive integer. We refer to singletons and to sets containing exactly two integers as \emph{points} and \emph{arcs}, respectively. See Figure~\ref{002}.\begin{figure}[H]
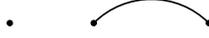

\figzer\caption{A point and an arc.}\label{002}
\end{figure}

\subsection{Set partitions}\label{155}

A \emph{set partition} of $A$ is a collection of pairwise disjoint nonempty subsets of $A$, called \emph{blocks}, whose union is $A$. The collection of all set partitions of $A$ is denoted by $\P(A)$, and we write $\P([n])$ as $\P_n$. It is well-known that the cardinality is $|\P_n|=\b_n$, the $n$th \emph{Bell number} \cite[\href{https://oeis.org/A000110}{A000110}]{OEIS}.

A set partition $I=\{I_1,\ldots,I_k\}$ of $[n]$ is often represented in its \emph{canonical form} as $I=(I_{a_1},\ldots,I_{a_k})$, ordered such that $\min(I_{a_1})<\cdots<\min(I_{a_k})$. Such partitions are commonly depicted by \emph{arc diagrams}, in which elements within the same block are transitively connected by arcs. See Figure~\ref{000}. For $i,j\in[n]$, we write $i\sim_Ij$ to indicate that $i$ and $j$ belong to the same block of $I$. Following \cite[Section 2]{Ea11}, we denote by $[i]_I$ the unique block of $I$ containing $i$. Thus, $i\sim_Ij$ if and only if $[i]_I=[j]_I$.

\begin{figure}[H]
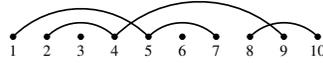
\figone\caption{Arc diagram of a set partition of $\P_7$.}\label{000}\end{figure}

For $I,J\in\P(A)$, we say that $I$ is \emph{finer} than $J$, or equivalently that $J$ is \emph{coarser} than $I$, denoted by $I\prec J$, if and only if every block of $J$ is a union of blocks of $I$. This relation defines a partial order that endows $\P(A)$ with the structure of a lattice, whose \emph{join} operation is denoted by $\vee$. The pair $(\P(A),\vee)$ forms a semigroup with identity element $(\{a\}\mid a\in A)$, referred to as the \emph{monoid of set partitions} of $A$.

As established in \cite[Theorem 2]{Fi03}, the monoid $(\P_n,\vee)$ admits a presentation by generators $e_{i,j}$ for $i,j\in[n]$ with $i<j$, subject to the relations:\begin{equation}\label{001}e_{i,j}^2=e_{i,j};\qquad e_{i,j}e_{r,s}=e_{r,s}e_{i,j};\qquad e_{i,j}e_{i,k}=e_{i,j}e_{j,k}=e_{i,k}e_{j,k}.
\end{equation}Here, $e_{i,j}$ denotes the set partition of $[n]$ whose only non-singleton block is $\{i,j\}$. See Figure~\ref{138}.

\begin{figure}[H]
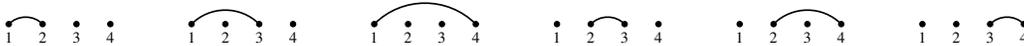
\figtwesix\caption{Generators of $\P_4$.}\label{138}\end{figure}

Every set partition of a subset $X\subset A$ can be viewed as a set partition of $A$ by adjoining singleton blocks for each element of $A\setminus X$. This construction defines a monomorphism from $\P(X)$ into $\P(A)$. Accordingly, given a set $B$ and a pair $(I,J)\in\P(A)\times\P(B)$, we write $I\vee J$ to denote the join of $I$ and $J$, regarded as set partitions of $A\cup B$. Conversely, for any $I\in\P(A)$, we denote by $I|_X$ the \emph{restriction} of $I$ to $X$, defined as the set partition of $X$ whose blocks are the nonempty intersections of the blocks of $I$ with $X$.

\subsection{Partition monoids}\label{018}

Let $\CC_n:=\P_{2n}$, and let $X=\{x_1,\ldots,x_n\}$ be a set of cardinality $n$ disjoint from $[2n]$. Elements of $\CC_n$ are commonly represented by \emph{strand diagrams}, which are constructed from arc diagrams by positioning the first $n$ vertices on an upper row and the remaining $n$ vertices on a lower row, which are subsequently renumbered $[1,n]$ for both levels. See Figure~\ref{003}. An arc intersecting both $[n]$ (upper row) and $[n+1,2n]$ (lower row) is called a \emph{line}. A line of the form $\{i,n+i\}$ is said to be \emph{vertical}. Points contained in $[n]$ are called \emph{up points}, and those contained in $[n+1,2n]$ are called \emph{down points}.

For a subset $B\subseteq[2n]\cup X$, define $B_*=\{b_*\mid b\in B\}$, where $b_*=b$ if $b\in X$, and for $b\in[2n]$, $b_*$ is defined piecewise: $b_*=b+n$ if $b\leq n$ and $b_*=b-n$ if $b>n$. By definition, for each set partition $I\in\CC_n$, the collection $I_*:=\{B_*\mid B\in I\}$ is a set partition of $[2n]$ satisfying $I_{**}=I$. The strand diagram of $I_*$ is obtained by transposing the strand diagram of $I$. Accordingly, $I_*$ is called the \emph{transpose} of $I$. See Figure~\ref{061}.

As introduced in \cite[Section 2]{Ea11}, for each $I\in\CC_n$, the \emph{domain} of $I$, denoted by $\dom(I)$, is the subset of elements $i\in[n]$ such that the block $[i]_I$ intersects $[n+1,2n]$. Similarly, the \emph{codomain} of $I$, denoted by $\codom(I)$, is the subset of elements $i\in[n]$ such that the block $[n+i]_I$ intersects $[n]$. See Figure~\ref{062}.

\begin{figure}[H]
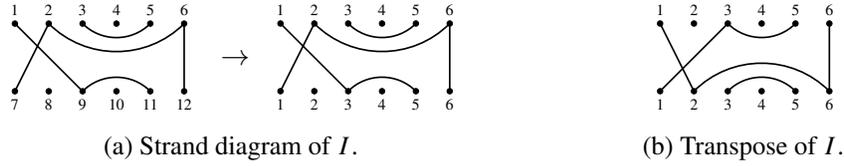
\centering
\begin{subfigure}[b]{0.5\textwidth}\figtwo\caption{Strand diagram of $I$.}\label{003}\end{subfigure}
\begin{subfigure}[b]{0.3\textwidth}\figsxn\caption{Transpose of $I$.}\label{061}\end{subfigure}
\caption{A set partition $I\in\CC_6$ with $\dom(I)=\{1,2,6\}$ and $\codom(I)=\{1,3,5,6\}$.}\label{062}
\end{figure}

For $I,J\in\CC_n$, the \emph{concatenation} of $I$ with $J$ is the set partition $I\ast J:=(I_X\vee J^X)|_{[2n]}$, where $I_X$ is obtained from $I$ by replacing each $n+i$ with $x_i$, and $J^X$ is obtained from $J$ by replacing each $i$ with $x_i$. See Figure~\ref{004}. The pair $(\CC_n,\ast)$ forms a monoid with identity $(\{i,n+i\}\mid i\in[n])$, called the $n$th \emph{partition monoid} \cite{Mr94}.

\begin{figure}[H]
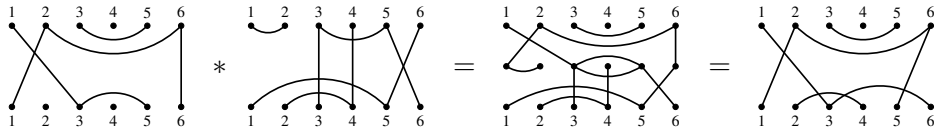
\figthr\caption{Concatenation of two set partitions in $\CC_6$.}\label{004}\end{figure}

There are several important submonoids of $\CC_n$, which play a significant role in semigroup theory and representation theory. One such monoid is the $n$th \emph{partial Brauer monoid} $\PBr_n$, formed by the set partitions of $[2n]$ whose blocks are either arcs or points \cite[Section 2]{Mz98} \cite[Subsection 2.1]{HaDe14}. This monoid contains most of the highly important submonoids of $\CC_n$, including the \emph{Brauer monoid} \cite{Br37} and the \emph{Jones monoid} \cite{Jn83}. Crucially, it also includes the symmetric group and the symmetric inverse monoid, which we introduce in the sequel.

\subsubsection{Permutations}

The collection of set partitions of $[2n]$ whose blocks are all lines forms a submonoid $(\S_n,\ast)$ of $\CC_n$, which coincides with the $n$th \emph{symmetric group} of permutations of $[n]$. Thus, $|\S_n|=n!$, the $n$th factorial number \cite[\href{https://oeis.org/A000142}{A000142}]{OEIS}.
It is also known that $\S_n$ is precisely the group of units of $\CC_n$. Indeed, we have $s^{-1}=s_*$ for all $s\in\S_n$. As shown in \cite{Moo896}, the group $\S_n$ admits a presentation by generators $s_1,\ldots,s_{n-1}$ subject to the following relations:\begin{gather}s_i^2=1;\qquad s_is_js_i=s_js_is_j,\quad|i-j|=1;\qquad s_is_j=s_js_i,\quad|i-j|>1.\label{005}\end{gather}Here, $s_i$ corresponds to the simple transposition exchanging $i$ with $i+1$. In our context, $s_i$ is the set partition where $\{i,n+i+1\}$ and $\{i+1,n+i\}$ are the unique blocks that are not vertical lines. See Figure~\ref{013}.

\begin{figure}[H]
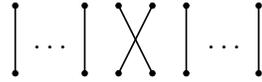
\figfou\caption{Generator $s_i$.}\label{013}\end{figure}

We equip $\S_n$ with the well-known \emph{length function} $\ell:\S_n\to\Nbb_0$, where $\ell(\sigma)$ is the minimal number of generators $s_i$, possibly repeated, needed to express $\sigma$ as a product. An expression of $\sigma$ using exactly $\ell(\sigma)$ generators is called a \emph{reduced expression} of $\sigma$ \cite[Subsection 1.6]{Hm97}.

For each $\sigma\in\S_n$ and $i\in[n]$, we write $i\sigma$ to denote the image of $i$ under $\sigma$. We also represent permutations of $[n]$ as words in $[n]^*$, namely $\sigma=\sigma_1\cdots\sigma_n$, where $\sigma_i=i\sigma$ for all $i\in[n]$. See Figure~\ref{063}.

\begin{figure}[H]
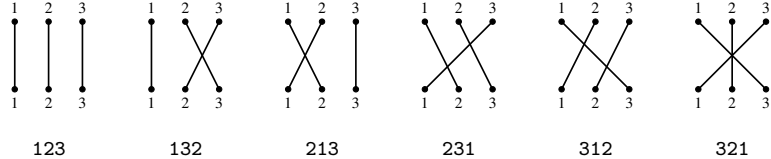
\figsvn\caption{The six permutations in $\S_3$.}\label{063}\end{figure}

Recall that an \emph{inversion} of a permutation $\sigma$ is a pair $(i,j)\in[n]^2$ with $i<j$ such that $\sigma_i>\sigma_j$. The set of inversions of $\sigma$ is denoted by $\inv(\sigma)$. It is well known that $|\inv(\sigma)|=\ell(\sigma)$ \cite[Subsection 1.3]{St97}. Diagrammatically, $|\inv(\sigma)|$ counts the number of times the lines connecting the top row to the bottom row cross each other.

\subsubsection{Partial permutations}

The collection of set partitions of $[2n]$ whose blocks are either lines or points forms a submonoid $(\I_n,\ast)$ of $\CC_n$, called the $n$th \emph{symmetric inverse monoid} \cite{Mnn57}. The elements $\sigma$ of $\I_n$ can be regarded as \emph{partial permutations} of $[n]$, that is, as bijective maps $\sigma:\dom(\sigma)\to\codom(\sigma)$.

As shown in \cite[Remark 4.13]{KuMa06}, the monoid $\mathcal{I}_n$ admits a presentation by generators $s_1,\ldots,s_{n-1}$ satisfying the relations in \eqref{005}, and generators $r_1,\ldots,r_n$ satisfying the relations:
\begin{gather}
r_i^2=r_i;\qquad r_ir_j=r_jr_i;\label{008}\\
s_ir_i=r_{i+1}s_i;\qquad r_is_ir_i=r_ir_{i+1};\qquad s_ir_j=r_js_i,\quad j\not\in\{i,i+1\}.\label{009}
\end{gather}
The generator $r_i$ corresponds to the set partition in which the points $\{i\}$ and $\{n+i\}$ are the unique blocks that are not vertical lines. See Figure~\ref{015}.

\begin{figure}[H]
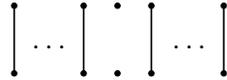
\figsix\caption{Generator $r_i$.}\label{015}\end{figure}

Observe that $(s_i)_*=s_i$ and $(r_i)_*=r_i$ for all $i\in[n]$. Hence, the transpose of any product of these generators is obtained by reversing the order of the factors.

We denote by $\I_n^r$ the set of partial permutations of $[n]$ with exactly $r$ lines. Note that $\I_n^n=\S_n$. The cardinality of $\I_n$ is given in \cite[\href{https://oeis.org/A002720}{A002720}]{OEIS}. See \cite[Corollary 2.3]{Um10}. More precisely, we have:\begin{equation}\label{019}|\I_n|=\sum_{r=0}^n|\I_n^r|;\qquad |\I_n^r|=\binom{n}{r}^2r!.\end{equation}
Accordingly to \cite[Corollary 2]{La98}, we have $\I_n=(\K_n\rtimes\S_n)/R$, where $\K_n$ is the free idempotent commutative monoid generated by $r_1,\ldots,r_n$, subject to the relations in \eqref{008}. The action that defines the semidirect product is given by $s_i\cdot r_j=r_{js_i}$, and $R$ is the congruence closure on $\K_n\rtimes\S_n$ generated by the pairs $(r_is_ir_i,r_ir_{i+1})$.

\begin{rem}\label{112}
By \eqref{009}, we have $r_{i+1}=s_ir_is_i$ for all $i\in[n-1]$. As a consequence, the commutation relations in \eqref{008} can be derived from the commutation of $r_1$ with $r_2$. Thus, the second relation in \eqref{008} can be replaced by the specific relation: $r_1s_1r_1s_1=s_1r_1s_1r_1$, or equivalently, by the relations $r_{i+1}r_is_i=s_ir_{i+1}r_i$.
\end{rem}

As with permutations, for each $\sigma\in\I_n$ and each $i\in\dom(\sigma)$, we write $i\sigma$ to denote the image of $i$ under $\sigma$. We also represent partial permutations of $[n]$ as words in $[n]^*_0$, namely $\sigma=\sigma_1\cdots\sigma_n$, where $\sigma_i=i\sigma$ for all $i\in\dom(\sigma)$, and $\sigma_i=0$ otherwise. See Figure~\ref{065}. An \emph{inversion} of a partial permutation $\sigma$ is a pair $(i,j)\in\dom(\sigma)^2$ with $i<j$ such that $\sigma_i>\sigma_j$. The set of inversions of $\sigma$ is denoted by $\inv(\sigma)$. Let $\sigma^{\scriptscriptstyle+}:=\sigma_1^{\scriptscriptstyle+}\cdots\sigma_r^{\scriptscriptstyle+}$ be the word obtained by removing the zeros from $\sigma$. We write $\sigma^\star$ for the unique permutation in $\S_r$ satisfying $\sigma^{\scriptscriptstyle+}_{1\sigma^{\star-1}}<\cdots<\sigma^{\scriptscriptstyle+}_{r\sigma^{\star-1}}$. For instance, if $\sigma={\tt05203}\in\I_5^3$, then $\sigma^\star={\tt312}\in\S_3$.

\begin{figure}[H]
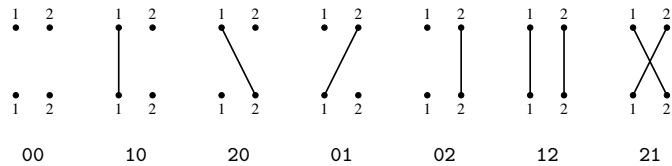
\figntn\caption{The six partial permutations in $\I_2$.}\label{065}\end{figure}

By applying simple \emph{Tietze transformations} \cite[Section 3.2]{Ru95}, we obtain the following result.
\begin{pro}[{\cite[Theorem 6.2]{So02}}]
The monoid $\I_n$ can also be presented by generators $s_1,\ldots,s_{n-1}$ satisfying the relations in \eqref{005}, together with a generator $\nu:=r_ns_{n-1}\cdots s_1r_1$, subject to the relations below.
\begin{gather}\nu^{i+1}s_i=\nu^{i+1}=s_{n-i}\nu^{i+1};\qquad s_i\nu=\nu s_{i+1};\qquad \nu s_1\cdots s_{n-1}\nu=\nu;\label{066}\end{gather}
\end{pro}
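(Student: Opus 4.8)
The plan is to realize the new presentation as the image of the Kudryavtseva--Mazorchuk presentation of $\I_n$ (generators $s_1,\dots,s_{n-1},r_1,\dots,r_n$ with relations \eqref{005}, \eqref{008}, \eqref{009}) under a sequence of Tietze transformations, exactly as indicated before the statement. First I would introduce $\nu$ as an abbreviation, i.e.\ add the defining relation $\nu=r_ns_{n-1}\cdots s_1r_1$; this is a Tietze transformation of the first kind (adjoining a redundant generator). The substantive work is then to show that, in the presence of \eqref{005} and the new relations \eqref{066}, one can eliminate the generators $r_1,\dots,r_n$, and conversely that \eqref{066} follows from \eqref{005}, \eqref{008}, \eqref{009} together with the defining equation for $\nu$. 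Concretely, I would verify:

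\emph{Step 1 (deriving $r_i$ from $\nu$).} Using $s_i r_j = r_j s_i$ for $j\notin\{i,i+1\}$ and $s_i r_i = r_{i+1}s_i$, one computes $r_1 = \nu^{\,n}$ more precisely by tracking how $\nu = r_n s_{n-1}\cdots s_1 r_1$ acts, and more usefully one shows $r_i = \nu^{\,i-1}(\text{something})\nu^{-?}$; the cleanest route is to observe that $s_{n-1}\cdots s_1$ is the long cycle, so $\nu$ ``rotates'' the $r$'s, and iterated products $\nu^{k}$ cut down the domain. I would prove by induction on $k$ that $\nu^{k} = r_n r_{n-1}\cdots r_{n-k+1}\,c^{k}\,r_1 r_2\cdots r_k$ for the appropriate cycle $c=s_{n-1}\cdots s_1$, whence each $r_i$ is recovered as a word in $\nu$ and the $s_j$ (using $r_{i+1}=s_ir_is_i$ from \eqref{009}, cf.\ Remark~\ref{112} to generate all $r_i$ from $r_1$). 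This is the Tietze elimination: every $r_i$ becomes expressible via $\nu$ and the $s_j$.

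\emph{Step 2 (the three relations in \eqref{066}).} With the $r_i$ expressed in terms of $\nu$, I would check each relation directly on diagrams or via the known multiplication in $\I_n$ (since we already have $\I_n$ in hand, it suffices to verify equalities of specific partial permutations). The relation $\nu s_1\cdots s_{n-1}\nu = \nu$ is the key ``rook'' relation: $\nu$ has a single line $\{n,\,2n\}$ (domain $\{1\}$ mapped from\,/\,to extremes), and conjugating/composing recovers $\nu$ up to the single surviving line; this encodes $r_1 s_1\cdots s_{n-1}\cdots = r_1$-type collapses. The relations $\nu^{i+1}s_i=\nu^{i+1}=s_{n-i}\nu^{i+1}$ say that once the domain is shrunk to $[1,n-i-1]$ (resp.\ codomain to a small interval), the transpositions $s_i$ (resp.\ $s_{n-i}$) act trivially — this is immediate from $r_j s_j r_j = r_j r_{j+1}$ and $r_j s_j = $ ... on the relevant factors. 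The commutation $s_i\nu=\nu s_{i+1}$ is the rotation property of $c=s_{n-1}\cdots s_1$.

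\emph{Step 3 (completeness).} Finally I would argue that no relations are lost: the monoid presented by \eqref{005} and \eqref{066} surjects onto $\I_n$ (send $s_i\mapsto s_i$, $\nu\mapsto r_ns_{n-1}\cdots s_1r_1$; all relations hold by Steps 1--2), and it is generated by $s_1,\dots,s_{n-1},\nu$ which we have shown generate $\I_n$; a counting/normal-form argument — every element can be brought to the form $\nu^{j}\sigma\nu^{k}$-ish, or more safely one uses that $\{s_i,\nu\}$ generate a quotient of $\I_n$ of order at most $|\I_n|$ by the rewriting afforded by \eqref{066} — shows the surjection is an isomorphism. The main obstacle is precisely this last step: producing enough consequences of \eqref{066} to establish that the abstractly presented monoid has order $\le|\I_n|$ (equivalently, that \eqref{066} suffices to rewrite every word into a canonical form indexed by partial permutations). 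In practice this is handled by the Tietze-transformation bookkeeping — one shows the new relations are \emph{equivalent} to the old ones modulo the substitution, so no independent normal-form argument is needed — but verifying that \eqref{008} and \eqref{009} are each \emph{derivable} from \eqref{005}+\eqref{066}+(defn of $\nu$) is the delicate computation, especially re-deriving $r_is_ir_i=r_ir_{i+1}$ from the collapsing relation $\nu^{i+1}s_i=\nu^{i+1}$ after conjugating back by powers of $c$.
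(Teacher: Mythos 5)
The paper itself gives no proof of this proposition: it is quoted from \cite[Theorem 6.2]{So02} and justified only by the remark that it follows from the Kudryavtseva--Mazorchuk presentation by Tietze transformations. Your overall strategy (adjoin $\nu$ as a redundant generator, eliminate $r_1,\ldots,r_n$, check that \eqref{066} holds in $\I_n$ and that it implies \eqref{008}--\eqref{009}) is therefore the intended route. However, your execution contains concrete errors about the element $\nu$ that would derail Steps 1 and 2. The element $\nu=r_ns_{n-1}\cdots s_1r_1$ is not a single-line diagram with domain $\{1\}$: it is the rank-$(n-1)$ shift $i\mapsto i+1$, with $\dom(\nu)=[n-1]$ and $\codom(\nu)=[2,n]$. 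This is exactly what makes the relations work; for instance $\nu^{i+1}$ has codomain $[i+2,n]$, so postcomposing with $s_i$ does nothing, giving $\nu^{i+1}s_i=\nu^{i+1}$. Consequently $r_1\neq\nu^{\,n}$; in fact $\nu^{\,n}$ is the zero element of $\I_n$ (all blocks are points), while $r_1$ has rank $n-1$. The correct elimination identity, recorded in the paper immediately after the statement, is $r_1=s_1\cdots s_{n-1}\nu$, from which $r_{i+1}=s_i\cdots s_1r_1s_1\cdots s_i$ recovers all the $r_j$. Also, the expression $\nu^{-?}$ has no meaning since $\nu$ is not invertible, and your proposed identity $\nu^k=r_n\cdots r_{n-k+1}\,c^k\,r_1\cdots r_k$ goes in the wrong direction for a Tietze elimination: it writes $\nu^k$ in terms of the $r_i$ rather than the reverse.

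The more serious issue is that the genuinely hard half of the argument --- showing that \eqref{008} and \eqref{009} become derivable from \eqref{005} and \eqref{066} once each $r_i$ is replaced by its expression in $\nu$ and the $s_j$ --- is only named, not carried out. Your Step 3 concedes this and appeals to ``Tietze-transformation bookkeeping,'' but that bookkeeping \emph{is} the proof: one must actually verify, for instance, that $r_i^2=r_i$ and $r_is_ir_i=r_ir_{i+1}$ follow from $\nu s_1\cdots s_{n-1}\nu=\nu$, $s_i\nu=\nu s_{i+1}$ and the collapsing relations $\nu^{i+1}s_i=\nu^{i+1}=s_{n-i}\nu^{i+1}$ after substituting $r_1=s_1\cdots s_{n-1}\nu$. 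As it stands, the proposal establishes at most that the abstractly presented monoid surjects onto $\I_n$, not that the surjection is injective.
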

The generator $\nu$ corresponds to the set partition whose non-singleton blocks are precisely the lines $\{i,n+i+1\}$ for all $i\in[n-1]$. See Figure~\ref{067} . More generally, observe that $\nu^{n-r}\in\I_n^r$ with $\dom(\nu^{n-r})=[r]$ and $\codom(\nu^{n-r})=[n-r+1,n]$, satisfying $1\nu<\cdots<r\nu$. See Figure~\ref{079}. As a consequence of relations \eqref{008} and \eqref{009}, we have $s_1\cdots s_{n-1}\nu=s_1\cdots s_{n-1}s_{n-1}\cdots s_1r_1^2=r_1$ and $r_i=s_i\cdots s_1r_1s_1\cdots s_i$.

\begin{figure}[H]
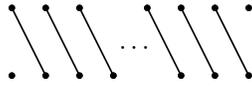
\figftn\caption{Generator $\nu$.}\label{067}\end{figure}
\begin{figure}[H]
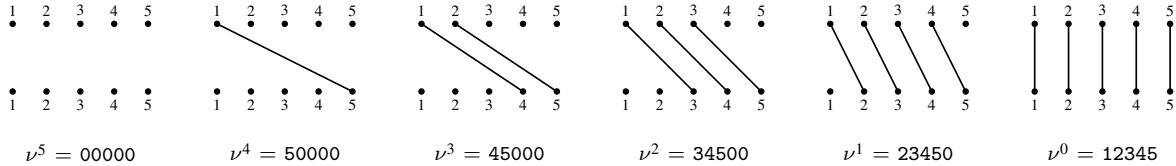
\figtwefou\caption{Powers of the generator $\nu$ in $\I_5$.}\label{079}\end{figure}

\subsection{Coloured set partitions}\label{156}

Let $S=\{q_1<\cdots<q_k\}$ be a finite set of integers ordered canonically. The \emph{arc decomposition} of $S$ is the collection $\arc{S}$ formed by the arcs $\{q_i,q_{i+1}\}$ for all $i\in[k-1]$. We note that $\arc{S}$ is empty if $S$ is a point.

Given a set partition $I:=(I_1,\ldots,I_k)\in\P(A)$ expressed in its canonical form, we define $\arc{I}=\arc{I}_1\sqcup\cdots\sqcup\arc{I}_k$. We refer to the elements of $\arc{I}$ as the \emph{standard arcs} of $I$. The \emph{standard arc diagram} of a set partition $I\in\P_n$ is the arc diagram whose arcs are exactly those in $\arc{I}$. Similarly, the \emph{standard strand diagram} of a set partition in $\CC_n$ is the strand diagram obtained using its standard arc decomposition.

An \emph{$\Fbb_q$-coloured set partition} \cite[Subsection 2.1]{Thi10} \cite{Mar13} of $A$ is a pair $(I,g)$, where $I\in\P(A)$ is a set partition and $g:\arc{I}\to\Fbb_q^\times$ is a map called a \emph{colouring}. The collection of $\Fbb_q$-coloured set partitions of $A$ is denoted by $\P_q(A)$, and we write $\P_{q,n}:=\P_q([n])$. Elements $(I,g)\in\P_{q,n}$ are represented by labelling the standard arcs of $I$ with the corresponding nontrivial elements of $\Fbb_q^\times$ assigned by $g$. See Figure~\ref{016}.

\begin{figure}[H]
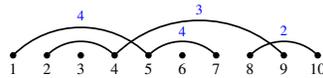
\figsev\caption{Diagram of a coloured set partition $(I,g)$ in $\P_{5,10}$.}\label{016}\end{figure}

We observe that every uncoloured set partition $I$ can be naturally regarded as an $\Fbb_q$-coloured set partition $(I,g)$ in which the standard arcs are trivially coloured by the identity element $1$, that is, $g(b)=1$ for all $b\in\arc{I}$.

\subsection{Framed monoids}\label{157}

A \emph{$q$-framed set partition} is an $\Fbb_q$-coloured set partition whose underlying set partition belongs to $\CC_n$. Such framed set partitions $(I,g)$ are represented by labelling the arcs of the standard strand diagram of $I$ with the nontrivial elements in $\Fbb_q^\times$ assigned by $g$. See Figure~\ref{017} . Given a subset $X$ of $\CC_n$, we denote by $\F_q(X)$ the collection of $q$-framed set partitions with underlying set partition in $X$. The \emph{transpose} of $(I,g)\in\F_q(\CC_n)$ is the framed set partition $(I,g)_*:=(I_*,g_*)$, where $g_*:\arc{I}_*\to\Fbb_q^\times$ is defined by $g_*(B)=g(B_*)$ for each arc $B\in\,\arc{I}\!\!\!_*$.

\begin{figure}[H]
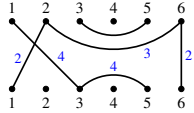
\figeig\caption{Diagram of a framed set partition in $\F_5(\CC_6)$.}\label{017}\end{figure}

The concatenation product for two $q$-framed set partitions is defined naturally within the context of the partial Brauer monoid. Indeed, by the definition of the concatenation product (Subsection~\ref{018}), if $I,J\in\PBr_n$, then each arc $B$ in $I\ast J$ arises as the restriction to $[2n]$ of the union of a unique collection of arcs in $I_X\cup J^X$. Denote by $B(I)$ and $B(J)$ the respective subsets of arcs in $I$ and $J$ that give rise to $B$ via the join $I_X\vee J^X$. Then, for any $q$-framed set partitions $(I,g)$ and $(J,h)$, there is a unique colouring $g\ast h:\,\,\arc{I\ast J}\to\Fbb_q^\times$ defined by:\[(g\ast h)(B)=\prod_{a\in B(I)}g(a)\prod_{a\in B(J)}h(a).\]Thus, the pair $(I\ast J,g\ast h)$ is again a $q$-framed set partition, called the \emph{concatenation} of $(I,g)$ with $(J,h)$, and denoted by $(I,g)\ast(J,h)$. This operation defines a product on $\F_q(\PBr_n)$, which is closed on $\F_q(S)$ for every subsemigroup $S$ of $\PBr_n$. If $M\subseteq\PBr_n$ is a submonoid, then $(\F_q(M),\ast)$ is a monoid with identity given by $1\in\CC_n$, which we call the \emph{$q$-framed monoid} of $M$. Note that the map $I\mapsto(I,1)$, where $1$ denotes the constant colouring, defines a monomorphism from $M$ into $\F_q(M)$. Accordingly, we identify $M$ with its image inside $\F_q(M)$ in what follows.

\begin{pro}\label{020}
$\F_q(\{1\})=(\Fbb_q^\times)^n=\langle a_1,\ldots,a_n\mid a_i^{\,q-1}=1;\quad a_ia_j=a_ja_i\rangle$.
\end{pro}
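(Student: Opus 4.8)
The statement to prove is $\F_q(\{1\})=(\Fbb_q^\times)^n=\langle a_1,\ldots,a_n\mid a_i^{\,q-1}=1;\ a_ia_j=a_ja_i\rangle$, where $\{1\}$ is the trivial submonoid of $\CC_n$ consisting only of the identity strand diagram.

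The plan is to unwind the definitions directly. First I would observe that the identity element $1\in\CC_n$ is the set partition $(\{i,n+i\}\mid i\in[n])$, all of whose blocks are vertical lines; in particular each block is an arc, so $1\in\PBr_n$ and $\F_q(\{1\})$ makes sense as a $q$-framed monoid. Its standard arc decomposition $\arc{1}$ consists of exactly the $n$ arcs $\{i,n+i\}$ for $i\in[n]$, one per vertical line. Hence an element of $\F_q(\{1\})$ is a pair $(1,g)$ with $g\colon\{\{i,n+i\}\mid i\in[n]\}\to\Fbb_q^\times$, i.e. an arbitrary assignment of a nonzero scalar $a_i:=g(\{i,n+i\})$ to the $i$th strand. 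This gives a bijection of sets $\F_q(\{1\})\to(\Fbb_q^\times)^n$, $(1,g)\mapsto(a_1,\ldots,a_n)$.

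Next I would check this bijection is a monoid homomorphism. Concatenating $1$ with $1$ yields $1\ast 1=1$, and for the colourings: when forming $1\ast 1$, each vertical line $\{i,n+i\}$ of the product arises from the union of the $i$th vertical line of the top copy (contributing $g(\{i,n+i\})$) and the $i$th vertical line of the bottom copy (contributing $h(\{i,n+i\})$), with $B(1)$ and $B(1)$ each a single arc. By the defining formula $(g\ast h)(B)=\prod_{a\in B(I)}g(a)\prod_{a\in B(J)}h(a)$, this gives $(g\ast h)(\{i,n+i\})=g(\{i,n+i\})h(\{i,n+i\})$, so the $i$th coordinate multiplies componentwise in $\Fbb_q^\times$. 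Thus $\F_q(\{1\})\cong(\Fbb_q^\times)^n$ as monoids, indeed as groups since $\Fbb_q^\times$ is a group. Finally, since $\Fbb_q^\times$ is cyclic of order $q-1$ (recalled in the Notation section), each factor is $\langle a_i\mid a_i^{\,q-1}=1\rangle$, and the direct product of $n$ copies of a group with presentation $\langle a\mid a^{q-1}=1\rangle$ has presentation $\langle a_1,\ldots,a_n\mid a_i^{\,q-1}=1;\ a_ia_j=a_ja_i\rangle$ (a standard fact about presentations of direct products), which is the claimed presentation.

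There is essentially no obstacle here: the only point requiring a moment's care is confirming that the concatenation formula specializes correctly on the identity — namely that for $I=J=1$ the sets $B(I)$, $B(J)$ attached to the block $\{i,n+i\}$ of $I\ast J$ are each the singleton $\{\{i,n+i\}\}$ and no other arcs of $I_X\cup J^X$ contribute — which is immediate from the diagrammatic description of concatenation since stacking the identity on the identity connects the $i$th top point straight through to the $i$th bottom point via exactly the $i$th strand of each copy. Everything else is a routine translation between the arc-diagram bookkeeping and the coordinatewise group structure on $(\Fbb_q^\times)^n$.
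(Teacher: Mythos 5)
Your proposal is correct and takes essentially the same route as the paper: both arguments reduce to the observation that the standard arcs of the identity diagram are the $n$ vertical lines, that concatenation multiplies the colourings componentwise, and that $\Fbb_q^\times$ is cyclic of order $q-1$. The only organizational difference is that the paper builds an epimorphism from the presented group $(\Fbb_q^\times)^n$ onto $\F_q(\{1\})$ and concludes injectivity by counting $|\F_q(\{1\})|=(q-1)^n$, whereas you exhibit the explicit bijection in the other direction and then cite the standard presentation of a direct product of cyclic groups; both are valid and equally routine.
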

\begin{proof}
Let $(1,g)\in\F_q(\{1\})$, and for each $i\in[n]$, write $a^{m_i}:=g(\{i,n+i\})$. Define $g_i:\arc{1}\to\Fbb_q^\times$ by setting $g_i(\{i,n+i\})=a$ and $g_i(\{j,n+j\})=1$ for all $j\neq i$. Then, we have $(1,g)=(1,g_1)^{m_1}\cdots(1,g_n)^{m_n}$, so $\F_q(\{1\})$ is generated by the elements $(1,g_1),\ldots,(1,g_n)$. See Figure~\ref{006}. Observe that $(1,g_i)^{\,q-1}=1$ and $(1,g_i)\ast(1,g_j)=(1,g_j)\ast(1,g_i)$, for all $i,j\in[n]$. Hence, the assignment $a_i\mapsto(1,g_i)$ defines a group epimorphism from $(\Fbb_q^\times)^n$ onto $\F_q(\{1\})$. Moreover, since the standard diagram of $1$ contains exactly $n$ arcs, and each arc can be independently coloured by any of the $q-1$ elements of $\Fbb_q^\times$, it follows that $|\F_q(\{1\})|=(q-1)^n=|(\Fbb_q^\times)^n|$. Therefore, the epimorphism $a_i\mapsto(1,g_i)$ is a group isomorphism.
\end{proof}

From now on, as in Proposition~\ref{020}, we identity each element $(1,g_i)$ with the generator $a_i\in(\Fbb_q^\times)^n$.

\begin{figure}[H]
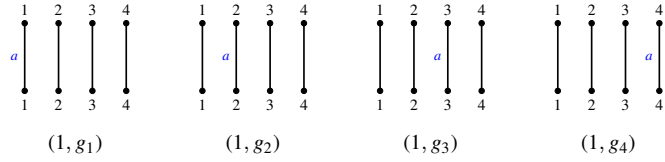
\figtwesev\caption{Generators of $\F_q(\{1\})$ for $n=4$.}\label{006}\end{figure}

\subsection{The framed symmetric inverse monoid}\label{149}

We refer to $\F_q(\I_n)$ as the $n$th \emph{framed symmetric inverse monoid}, and its elements are called \emph{framed partial permutations} of $[n]$. Since each line of a partial permutation in $\I_n^r$ can be independently labelled by one of the $q-1$ nonzero elements of $\Fbb_q^\times$, it follows that $|\F_q(\I_n^r)|=(q-1)^r|\I_n^r|$. In particular, for $r=n$ we obtain $|\F_q(\S_n)|=(q-1)^nn!$. Combining this with identity \eqref{019}, we deduce the following enumeration formula:\begin{equation}|\F_q(\I_n)|=\sum_{r=0}^n|\F_q(\I_n^r)|;\qquad|\F_q(\I_n^r)|=(q-1)^r\binom{n}{r}^2r!.\label{128}\end{equation}Observe that this formula recovers \eqref{019} \cite[\href{https://oeis.org/A000108}{A000108}]{OEIS} whenever $q=2$. Moreover, the resulting sequences for $q=3$ and $q=4$ appear to coincide with \cite[\href{https://oeis.org/A025167}{A025167}]{OEIS} and \cite[\href{https://oeis.org/A102757}{A102757}]{OEIS}, respectively.

\begin{lem}
The monoid $\F_q(\I_n)$ is generated by $s_1,\ldots,s_{n-1}$, $r_1,\ldots,r_n$ and $a_1,\ldots,a_n$.
\end{lem}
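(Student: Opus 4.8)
The plan is to show that an arbitrary framed partial permutation $(\sigma,g)\in\F_q(\I_n)$ can be written as a product of the proposed generators. The key structural fact is the decomposition $\F_q(\I_n)=\F_q(\{1\})\cdot\I_n$ at the level of diagrams: a framed partial permutation consists of an underlying partial permutation $\sigma\in\I_n$ together with a colouring of its lines. First I would recall from the discussion preceding Proposition~\ref{020} that $\I_n$ embeds in $\F_q(\I_n)$ via $\sigma\mapsto(\sigma,1)$, so the uncoloured partial permutations are already in the submonoid generated by $s_1,\dots,s_{n-1},r_1,\dots,r_n$ (using the presentation from \cite{KuMa06} recalled in \eqref{008}--\eqref{009}). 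It then remains to produce the colourings.

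The main step is to express a single coloured line as a product. Given $(\sigma,g)$, each line of $\sigma$ is of the form $\{i,n+(i\sigma)\}$ for $i\in\dom(\sigma)$, coloured by some $g(\{i,n+(i\sigma)\})=a^{m_i}\in\Fbb_q^\times$. The idea is that precomposing $(\sigma,1)$ with the framing element $a_i^{m_i}\in\F_q(\{1\})$ multiplies the colour on the line emanating from the up-point $i$ by $a^{m_i}$, since under concatenation the colour on an arc is the product of the colours on the arcs composing it (the formula for $g\ast h$ in Subsection~\ref{157}). Concretely, I would check that
\[
(\sigma,g)=\Bigl(\prod_{i\in\dom(\sigma)}a_i^{m_i}\Bigr)\ast(\sigma,1),
\]
the product over $i$ being well-defined since the $a_i$ commute by Proposition~\ref{020}; here for $i\notin\dom(\sigma)$ the point $\{i\}$ of $\sigma$ absorbs no colour, so it does no harm to take the full product $\prod_{i=1}^n a_i^{m_i}$ with $m_i=0$ for such $i$. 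Combined with a generating expression for $(\sigma,1)$ in terms of the $s_j$ and $r_j$, this exhibits $(\sigma,g)$ as a word in $s_1,\dots,s_{n-1},r_1,\dots,r_n,a_1,\dots,a_n$.

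The main obstacle — really the only point needing care — is verifying the colour-transfer identity above from the concatenation definition: one must track which standard arcs of $a_i^{m_i}$ and of $(\sigma,1)$ fuse into each standard arc of the product, and confirm that the resulting colour on the line through $i$ is exactly $a^{m_i}$ while all other lines and points remain trivially coloured. This is a routine but slightly fiddly diagram computation, best done by noting that $a_i^{m_i}$ equals the identity diagram $1\in\CC_n$ with its $i$th vertical line coloured $a^{m_i}$, so concatenating on top of $(\sigma,1)$ simply prepends that colour to whatever arc of $\sigma$ starts at up-point $i$ and leaves everything else unchanged. Once this is in hand, the lemma follows immediately, and in fact this computation is exactly what will be reused to prove the presentation in Proposition~\ref{025}.
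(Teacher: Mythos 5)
Your proposal is correct and follows essentially the same route as the paper: factor $(\sigma,g)$ as a framing element of $\F_q(\{1\})$ (which Proposition~\ref{020} writes as a product of the $a_i$) concatenated with the trivially coloured partial permutation $(\sigma,1)$, which lies in the copy of $\I_n$ generated by the $s_j$ and $r_j$. The colour-transfer identity you flag as the one point needing care is exactly the factorization $(I,g)=(1,h)\ast(I,1)$ that the paper asserts.
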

\begin{proof}
Let $(I,g)\in\F_q(\I_n)$ be a framed partial permutation. Define $h:\arc{1}\to\Fbb_q^\times$ by setting $h(\{i,n+i\})=g([i]_I)$ if $[i]_I$ is a line, and $h(\{i,n+i\})=1$ if $[i]_I$ is a point. Then $(I,g)=(1,h)\ast(I,1)$, where $(1,h)\in\F_q(\{1\})$ and $(I,1)$ is the element $I\in\I_n$ viewed as a framed partial permutation with trivial colouring. By Proposition~\ref{020}, the element $(1,h)$ is a product of generators $a_1,\ldots,a_n$. Since $I\in\I_n$, it can be written as a product of the generators $s_1,\ldots,s_{n-1}$ and $r_1,\ldots,r_n$. This proves the lemma.
\end{proof}

Observe that the following relations hold in $\F_q(\I_n)$:\begin{gather}
a_i^{\,q-1}=1;\qquad a_ia_j=a_ja_i;\label{022}\\
s_ia_i=a_{i+1}s_i;\qquad s_ia_j=a_js_i,\quad j\not\in\{i,i+1\};\label{023}\\
r_ia_j=a_jr_i;\qquad r_ia_i=r_i.\label{024}
\end{gather}

\begin{pro}\label{025}
The monoid $\F_q(\I_n)$ admits a presentation by generators $s_1,\ldots,s_{n-1}$ satisfying \eqref{005}, generators $r_1,\ldots,r_n$ satisfying \eqref{008} and \eqref{009}, and generators $a_1,\ldots,a_n$, subject to \eqref{022}, \eqref{023} and \eqref{024}.
\end{pro}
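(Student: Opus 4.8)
The strategy is the standard one for establishing a monoid presentation: show that the proposed generators and relations define a monoid $\widetilde{\F}$ that surjects onto $\F_q(\I_n)$, and then show the surjection is injective by a counting (or normal-form) argument. The surjectivity is immediate: the previous lemma shows $s_i$, $r_i$, $a_i$ generate $\F_q(\I_n)$, and relations \eqref{005}, \eqref{008}, \eqref{009}, \eqref{022}, \eqref{023}, \eqref{024} all hold in $\F_q(\I_n)$ (the latter three were just verified), so there is a well-defined monoid epimorphism $\pi\colon\widetilde{\F}\twoheadrightarrow\F_q(\I_n)$. It remains to prove $|\widetilde{\F}|\le|\F_q(\I_n)|=\sum_{r=0}^n(q-1)^r\binom{n}{r}^2r!$, using only the abstract relations.

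**Key steps.** First I would observe that relations \eqref{022} alone make the submonoid $\langle a_1,\dots,a_n\rangle\subseteq\widetilde{\F}$ a quotient of $(\Fbb_q^\times)^n$, hence of cardinality at most $(q-1)^n$. Second, using \eqref{023} I would push all the $a_i$'s to the left of any word in the generators: for a word $w$ in the $s_i,r_i,a_i$, the relations $s_ia_j=a_{j'}s_i$ and $r_ia_j=a_jr_i$ (together with $r_ia_i=r_i$, which can only delete letters) let me rewrite $w=\mathbf{a}\cdot w'$ where $\mathbf{a}\in\langle a_1,\dots,a_n\rangle$ and $w'$ is a word in $s_i,r_i$ only. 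Third, invoking the presentation of $\I_n$ via \eqref{005}, \eqref{008}, \eqref{009} (stated in the excerpt following \cite{KuMa06}), the word $w'$ represents some $\sigma\in\I_n$, and I may choose a fixed normal form $w'_\sigma$ for each $\sigma$. Thus every element of $\widetilde{\F}$ is represented as $\mathbf{a}\,w'_\sigma$. Fourth — and this is the heart of the bound — I must show that the ``colour'' part $\mathbf{a}$ can be normalized so that it only records $q-1$ choices on each of the $r=|{\I_n^r}|$-many lines of $\sigma$, and is forced to be trivial on positions in $[n]\setminus\dom(\sigma)$. This uses \eqref{024}: whenever $w'_\sigma$ begins (after moving colours left) with an $r_i$ at a position $i$ that ends up outside $\dom(\sigma)$, the relation $r_ia_i=r_i$ absorbs any colour there; and via the conjugation relations $s_i a_i=a_{i+1}s_i$ the colour originally on strand $i$ is transported to the strand $i\sigma$ in the codomain, so colours on the $\dom(\sigma)$-strands and colours on the $\codom(\sigma)$-strands carry the same information, namely one element of $\Fbb_q^\times$ per line. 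Counting: $\sum_r(q-1)^r|\I_n^r|=|\F_q(\I_n)|$ by \eqref{128}, giving $|\widetilde{\F}|\le|\F_q(\I_n)|$ and hence, with surjectivity, $\pi$ is an isomorphism.

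**Main obstacle.** The delicate point is the fourth step: making rigorous that the relations \eqref{024} — specifically $r_ia_i=r_i$ — interact correctly with a chosen normal form $w'_\sigma$ of $\sigma\in\I_n$ so as to kill precisely the colours on the ``undefined'' strands and to identify domain-colours with codomain-colours, without accidentally over-counting. One clean way to package this is to avoid committing to an explicit normal form and instead argue via the semidirect-product description $\I_n=(\K_n\rtimes\S_n)/R$ recalled after \eqref{019}: one shows $\widetilde{\F}\cong\big((\Fbb_q^\times)^n\times\K_n\big)\rtimes\S_n$ modulo the congruence generated by $(r_is_ir_i,r_ir_{i+1})$ together with $(r_ia_i,r_i)$, with $\S_n$ acting diagonally by $s_i\cdot a_j=a_{js_i}$, $s_i\cdot r_j=r_{js_i}$; then a direct transversal count in this extended semidirect product yields exactly $\sum_r(q-1)^r\binom{n}{r}^2r!$. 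Either route reduces the problem to bookkeeping already implicit in the proof of the $\I_n$-presentation, so I expect the write-up to be short once this bookkeeping is set up carefully.
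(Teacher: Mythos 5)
Your proposal is correct and, in its final ``clean packaging'', coincides with the paper's own argument: the paper realizes $\F_q(\I_n)$ as the quotient of the semidirect product $(\Fbb_q^\times)^n\rtimes\I_n$ (whose presentation is supplied by \cite[Corollary 2]{La98} together with the known presentation of $\I_n$) by the congruence generated by the pairs $(r_ia_i,r_i)$. The only cosmetic difference is that where you propose to finish by a transversal count giving $|\widetilde{\F}|\le\sum_r(q-1)^r\binom{n}{r}^2r!$, the paper instead identifies the kernel of the epimorphism directly, using the observation that if two coloured words $a_1^{p_1}\cdots a_n^{p_n}rs$ and $a_1^{q_1}\cdots a_n^{q_n}rs$ have the same image then any index $i$ with $p_i\neq q_i$ must be a point of the underlying partial permutation, so $r_i$ occurs in $r$ and the discrepancy is absorbed by $r_ia_i=r_i$ --- which is exactly the bookkeeping you flagged as the delicate step.
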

\begin{proof}
Let $(1,g),(1,h)\in\F_q(\{1\})$, and $I,J\in\I_n$ be such that $(1,g)\ast(I,1)=(1,h)\ast(J,1)$. Then necessarily $I=J$ and $g\ast1=h\ast1$, that is, $g(\{i,n+i\})=h(\{i,n+i\})$ for all $i\in[n]$ such that $[i]_I$ is a line. As noted in Subsection~\ref{018}, there exist $r\in\I_n$ and $s\in\S_n$ such that $(I,1)=rs$. Moreover, by Proposition~\ref{020} we can write $(1,g)=a_1^{p_1}\cdots a_n^{p_n}$ and $(1,h)=a_1^{q_1}\cdots a_n^{q_n}$ for some $p_i,q_i\in[q-1]_0$, with $g(\{i,n+i\})=h(\{i,n+i\})$ if and only if $p_i=q_i$. Hence $a_1^{p_1}\cdots a_n^{p_n}rs=a_1^{q_1}\cdots a_n^{q_n}rs$. On the other hand, if $p_i\neq q_i$ for some $i\in[n]$, then $I$ contains the blocks $\{i\}$, so the generator $r_i$ occurs in $r$. In this case, the pair $(a_1^{p_1}\cdots a_n^{p_n}rs,a_1^{q_1}\cdots a_n^{q_n}rs)$ lies in the congruence closure generated by $K:=\{(r_ia_i,r_i)\mid i\in[n]\}$.

Now consider the semidirect product $(\Fbb_q^\times)^n\rtimes\I_n$ defined by the action $s_i\cdot a_j=a_{js_i}$ and $r_i\cdot a_j=a_j$. By \cite[Corollary 2]{La98} and Proposition~\ref{018}, this monoid has presentation with generators $s_1,\ldots,s_{n-1}$ satisfying \eqref{005}, generators $r_1,\ldots,r_n$ satisfying \eqref{008} and \eqref{009}, and generators $a_1,\ldots,a_n$ satisfying \eqref{022}, \eqref{023} and the first relation in \eqref{024}. Since these relations hold in $\F_q(\I_n)$, the map $s_i\mapsto s_i$, $r_i\mapsto r_i$ and $a_i\mapsto a_i$ defines an epimorphism $(\Fbb_q^\times)^n\rtimes\I_n\to\F_q(\I_n)$. From the discussion above, together with the fact that the second relation in \eqref{024} holds in $\F_q(\I_n)$, the kernel of this epimorphism is precisely the congruence closure of $K$. Therefore $((\Fbb_q^\times)^n\rtimes\I_n)/\bar{K}\simeq\F_q(\I_n)$, as required.
\end{proof}

Observe that $\F_q(\I_n)$ is one of the monoids studied in \cite[Section 3]{ChEa23}, and also belongs to the class of \emph{abacus monoids} introduced in \cite[Subsubsection 3.5.1]{AiJuPa25}.

\begin{crl}\label{026}
The group $\F_q(\S_n)$ admits presentation by generators $s_1,\ldots,s_{n-1}$ satisfying \eqref{005}, and generators $a_1,\ldots,a_n$ satisfying \eqref{022} and \eqref{023}. Consequently, $\F_q(\S_n)=(\Fbb_q^\times)^n\rtimes\S_n$.
\end{crl}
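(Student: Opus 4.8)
The plan is to obtain the presentation from Proposition~\ref{025} by an order count, using the enumeration \eqref{128}. Since $\F_q(\S_n)$ is a submonoid of $\F_q(\I_n)$, the relations \eqref{005}, \eqref{022} and \eqref{023} hold in $\F_q(\S_n)$; and, exactly as in the proof that $\F_q(\I_n)$ is generated by the $s_i$, $r_i$ and $a_i$, every framed permutation $(I,g)$ with $I\in\S_n$ decomposes as $(1,h)\ast I$ with $(1,h)\in\F_q(\{1\})$, so $\F_q(\S_n)$ is generated by $s_1,\ldots,s_{n-1}$ and $a_1,\ldots,a_n$. Hence, letting $G$ be the group abstractly presented by these generators subject to \eqref{005}, \eqref{022} and \eqref{023}, the assignment $s_i\mapsto s_i$, $a_i\mapsto a_i$ defines an epimorphism $\psi\colon G\to\F_q(\S_n)$.

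The next step is to bound $|G|$ from above. Set $A=\langle a_1,\ldots,a_n\rangle\le G$. By \eqref{022}, $A$ is abelian and generated by $n$ elements each of order dividing $q-1$, so $|A|\le(q-1)^n$. By \eqref{023} together with $s_i^2=1$, conjugation by $s_i$ interchanges $a_i$ and $a_{i+1}$ and fixes the remaining $a_j$; thus each $s_i$ normalizes $A$, and since $G$ is generated by the $s_i$ and the $a_j$, it follows that $A$ is normal in $G$. The quotient $G/A$ is generated by the images of $s_1,\ldots,s_{n-1}$, which satisfy \eqref{005}, so $G/A$ is a quotient of $\S_n$ and $|G/A|\le n!$. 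Therefore $|G|\le(q-1)^n\,n!$.

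Finally, \eqref{128} with $r=n$ gives $|\F_q(\S_n)|=(q-1)^n\,n!$, so $\psi$ is a surjection between finite groups of equal cardinality, hence an isomorphism; this establishes the claimed presentation. Moreover, equality of cardinalities forces $|A|=(q-1)^n$ and $|G/A|=n!$, so $A\cong(\Fbb_q^\times)^n$, $G/A\cong\S_n$, and the extension splits through $\langle s_1,\ldots,s_{n-1}\rangle$; since \eqref{023} encodes precisely the action $s_i\cdot a_j=a_{js_i}$, we conclude $\F_q(\S_n)\cong(\Fbb_q^\times)^n\rtimes\S_n$. Equivalently, one may simply observe that the displayed presentation is the standard presentation of that semidirect product.

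I expect the only nonroutine point to be the second step, namely checking that \eqref{023} makes $A$ normal with $|A|\le(q-1)^n$ and $G/A$ a quotient of $\S_n$ — that is, that these relations express a genuine permutation action of the $s_i$ on the $a_j$ and nothing more; everything else is the size comparison against \eqref{128}.
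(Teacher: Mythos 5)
Your proof is correct. The paper gives no separate argument for this corollary: it is meant to fall out of Proposition~\ref{025}, whose proof builds the epimorphism from the semidirect product $(\Fbb_q^\times)^n\rtimes\I_n$ (whose presentation is imported from \cite[Corollary 2]{La98}) and identifies the kernel as the congruence generated by the pairs $(r_ia_i,r_i)$; dropping the $r_i$ makes that congruence trivial, so the presentation of $(\Fbb_q^\times)^n\rtimes\S_n$ survives unchanged. You instead work directly with the abstractly presented group $G$: you verify from \eqref{023} and $s_i^2=1$ that conjugation by $s_i$ permutes the $a_j$, so that $A=\langle a_1,\ldots,a_n\rangle$ is a normal abelian subgroup of order at most $(q-1)^n$ with $G/A$ a quotient of $\S_n$, and then close the argument by comparing $|G|\le(q-1)^n\,n!$ with the count $|\F_q(\S_n)|=(q-1)^n\,n!$ from \eqref{128}. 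This is the same counting device the paper itself uses in Proposition~\ref{020}, but applied here it replaces the appeal to \cite{La98} with an elementary, self-contained verification that the relations \eqref{022}--\eqref{023} encode exactly a permutation action and nothing more; the trade-off is that you re-derive by hand (normality, the splitting via $\langle s_1,\ldots,s_{n-1}\rangle$, the identification $A\cong(\Fbb_q^\times)^n$) what the cited semidirect-product presentation theorem packages for free. Both routes are sound and reach the same conclusion.
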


Moreover, by applying simple \emph{Tietze transformations} \cite[Section 3.2]{Ru95}, we can show that $\F_q(\I_n)$ also admits a presentation with generators $s_1,\ldots,s_{n-1}$ satisfying \eqref{005}, the generator $\nu$ satisfying \eqref{066}, and generators $a_1,\ldots,a_{n-1}$ satisfying \eqref{022} and \eqref{023}, together with the relations:\begin{gather}
a_i\nu=\nu a_{i+1},\quad i<n;\qquad \nu a_1=a_n\nu=\nu.\label{085}
\end{gather}In particular, $a_{n-i+1}\nu^{\,j}=\nu^{\,j}a_i=\nu^{\,j}$ whenever $i\leq j$.

Observe that for any $\sigma\in\F_q(\I_n)$ with $|\dom(\sigma)|=r$, there are unique $\omega_1,\omega_2\in\S_n$, $\sigma^\star\in\S_r$ and $f\in \F_q(\{1\})$ such that $\sigma=\omega_1\,f\sigma^\star v^{n-r}\omega_2$. Furthermore, for $j,k\in[n]$ with $j\leq k$ and $\mbf:=(m_1,\ldots,m_r)\in(\Fbb_q^{\times})^r$, define\[s_{k,\,j}=\begin{cases} 
1&\text{if }k=j\\s_{k-1}\cdots s_j&\text{otherwise}\end{cases}\qquad\text{and}\qquad f_\mbf=a_1^{m_1}\cdots a_r^{m_r}.\]Additionally, for a subset $A=\{i_1<\cdots<i_k\}\subseteq[n]$, we define\[\omega_A=s_{i_1,1}\ldots s_{i_k,k}\qquad\text{and}\qquad\overline{\omega}_A=s_{n-k+1,i_1}\ldots s_{n,i_k}.\]Thus, we have $\omega_A=\overline{\omega}_A=1$ whenever $|A|=n$. See Figure~\ref{165}.\begin{figure}[H]
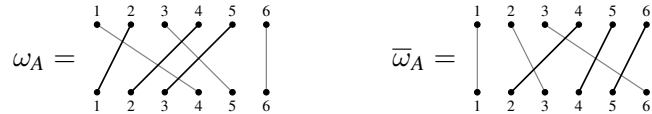
\figtweeig\caption{Elements $\omega_A$ and $\overline{\omega}_A$ for $n=6$ and $A=\{2,4,5\}$.}\label{165}\end{figure}

Let $\sigma\in\F_q(\I_n)$ with $A:=\dom(\sigma)=\{i_1<\cdots<i_r\}$ and $B:=\codom(\sigma)=\{j_1<\cdots<j_r\}$. By applying the defining relations we can write $\sigma=f_\sigma\tilde{\sigma}$, where $\tilde{\sigma}\in\I_n$ and $f_\sigma=a_{i_1}^{m_1}\dots a_{i_r}^{m_r}$ for some $\mbf=(m_1,\ldots,m_r)\in(\Fbb_q^{\times})^r$, which implies $\sigma=\omega_A\,f_\mbf\,\sigma^\star v^{n-r}\,\overline{\omega}_B$. For instance,\begin{figure}[H]
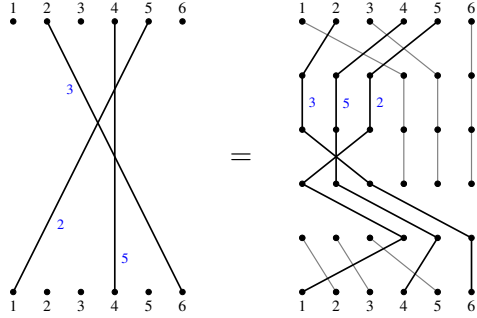
\figtwenin\caption{Element $\sigma=\omega_{\{2,4,5\}}\,f_{(3,5,2)}\,(s_1s_2s_1)\,\nu^3\,\overline{\omega}_{\{1,4,6\}}$.}\end{figure}

\section{Rook monoids}\label{153}

In this section, we focus on the matrix interpretation of the combinatorial structures established in Section~\ref{152}. We begin by recalling the classical rook monoid and introducing the generalized rook monoid $\QQ$ as a submonoid of the full matrix monoid $M$. 
A central result of this section is the establishment of an explicit isomorphism between $\QQ$ and the framed symmetric inverse monoid $\F_q(\I_n)$. Furthermore, we analyze the double coset decomposition of $M$ with respect to the unipotent subgroup $U$, demonstrating that these double cosets are naturally indexed by elements of $\QQ$. Finally, we extend the classical length function to this generalized setting and study the convolution product of double cosets, providing the necessary multiplication formulas for the algebra structure defined in the sequel.

Let $M=\Mbf_n(\Fbb_q)$ be the monoid of $n\times n$ matrices over the field $\Fbb_q$. For each $r\in[n]_0$, we denote by $M^r$ the subset of $M$ consisting of matrices of rank $r$. If $A\subset M$, we set $A^r:=A\cap M^r$.

For $i,j\in[n]$, let $E_{i,j}$ be the $n\times n$ matrix with all entries equal to zero, except for a $1$ in position $(i,j)$. Every matrix $\sigma=(\sigma_{i,\,j})\in M$ can be written uniquely as a linear combination of these elementary matrices:
\begin{equation}\label{029}
\sigma=\sum_{i,\,j\in[n]}\sigma_{i,\,j}E_{i,j}.
\end{equation}
The matrix product is given by the following rule for the elementary matrices:
\begin{equation}
E_{i,j}E_{h,k}=\begin{cases}E_{i,k}&\text{if }j=h\\0&\text{otherwise}.\end{cases}\label{081}
\end{equation}

\subsection{The rook monoid}\label{158}

The $n$th \emph{rook monoid} \cite{So90} is the submonoid $\R\subset M$ consisting of $n\times n$ matrices with entries in $\{0,1\}\subset\Fbb_q$ having at most one nonzero entry in each row and column. It is known that $|\R^r|$ is \cite[\href{https://oeis.org/A144084}{A144084}]{OEIS} and $|\R|$ is \cite[\href{https://oeis.org/A002720}{A002720}]{OEIS}. As mentioned in \cite{So90}, $\R$ is isomorphic to the $n$th symmetric inverse monoid $\I_n$. See Figure~\ref{027}. Each matrix $\sigma=(\sigma_{i,j})\in\R$ corresponds to the partial permutation where the $\{i,n+j\}$ is a line if and only if $\sigma_{i,j}=1$. The sets $I(\sigma)$ and $J(\sigma)$ thus coincide with the domain and codomain of the associated partial permutation. The group of units of $\R$ is the $n$th \emph{Weyl group} $W\simeq\S_n$, of order $n!$ \cite[\href{https://oeis.org/A000142}{A000142}]{OEIS}, consisting of the \emph{permutation matrices} in $\R$, that is, those with no zero rows. By this isomorphism, $W$ inherits the \emph{length function} $\ell:W\to\Nbb_0$ from $\S_n$, which extends to the entire rook monoid \cite[Section 2]{So90} by defining $\ell(\sigma)$ for $\sigma\in\R^r$ as the minimal number of simple transpositions needed to transform $\sigma$ into $v_r:=(E_{1,2}+\cdots+E_{n-1,n})^{n-r}=E_{1,1+n-r}+\cdots+E_{r,n}$. See Figure~\ref{028}. Explicitly, the length is defined by:\[\ell(\sigma)=\min\{\ell(u)+\ell(w)\mid u,w\in W\quad\text{and}\quad u\sigma w=v_r\}.\]This is well-defined because the action of $W\times W$ on $\R^r$ given by $(u,w)\sigma=u\sigma w^{-1}$ is transitive \cite[Equation (2.2)]{So90}. Under the identification above, the generators of $W$ are $s_1,\ldots,s_{n-1}$, where each $s_k$ corresponds to the permutation matrix $1-(E_{k,k}-E_{k+1,k+1})+(E_{k,k+1}+E_{k+1,k})$, and $\nu_{n-1}$ corresponds to the partial permutation $\nu$ in Figure~\ref{067}, so that $\nu_r=\nu^{n-r}$. Moreover, the partial permutation corresponding to the transpose matrix $\sigma_*$ of $\sigma\in\R$ is the transpose of the partial permutation associated with $\sigma$. For instance, if $\sigma$ is the element ${\tt05130}$ in Figure~\ref{027}, then the partial permutation corresponding to $\sigma_*$ is ${\tt05130}_*={\tt30402}$.

\begin{figure}[H]
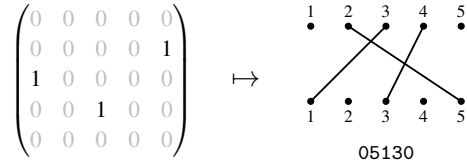
\figten\caption{Correspondence between $\R$ and $\I_n$.}\label{027}\end{figure}

\begin{figure}[H]
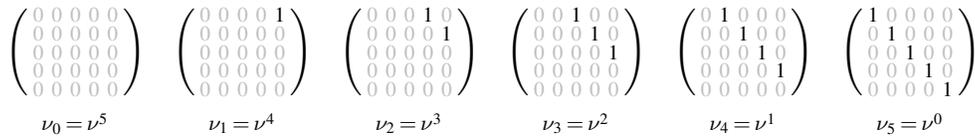
\figele\caption{Elements $v_r$ and their corresponding partial permutations.}\label{028}\end{figure}

As shown in \cite[Theorem 4.12 and (4.20)]{So90}, every matrix in $\R$ can be written as a product of permutations $s_i$ and powers of the matrix $\nu$. More precisely,\begin{equation}\R=\bigsqcup_{r=0}^nW\nu^rW.\label{070}\end{equation}For instance,\[\begin{array}{ccccc}
\begin{psmallmatrix}\zerc&\zerc&\zerc&\zerc&\zerc\\\zerc&\zerc&\zerc&\zerc&1\\1&\zerc&\zerc&\zerc&\zerc\\\zerc&\zerc&1&\zerc&\zerc\\\zerc&\zerc&\zerc&\zerc&\zerc\end{psmallmatrix}&=&
\begin{psmallmatrix}\zerc&\zerc&\zerc&1&\zerc\\\zerc&\zerc&1&\zerc&\zerc\\1&\zerc&\zerc&\zerc&\zerc\\\zerc&1&\zerc&\zerc&\zerc\\\zerc&\zerc&\zerc&\zerc&1\end{psmallmatrix}&
\begin{psmallmatrix}\zerc&\zerc&1&\zerc&\zerc\\\zerc&\zerc&\zerc&1&\zerc\\\zerc&\zerc&\zerc&\zerc&1\\\zerc&\zerc&\zerc&\zerc&\zerc\\\zerc&\zerc&\zerc&\zerc&\zerc\end{psmallmatrix}&
\begin{psmallmatrix}\zerc&1&\zerc&\zerc&\zerc\\\zerc&\zerc&\zerc&1&\zerc\\1&\zerc&\zerc&\zerc&\zerc\\\zerc&\zerc&1&\zerc&\zerc\\\zerc&\zerc&\zerc&\zerc&1\end{psmallmatrix}\\&&_{s_1s_2s_3s_1s_2}&_{\nu^2}&_{s_2s_1s_3}\end{array}\]

\subsection{The generalized rook monoid}\label{148}

The $n$th \emph{generalized rook monoid} \cite[Definici\'on 4.0.1]{Jor22} is the submonoid $\QQ\subset M$ consisting of those $n\times n$ matrices having at most one nonzero entry in each row and column. 

For $\sigma\in\QQ$ as in \eqref{029}, we define the following subsets of indices: $I(\sigma)\subset[n]$ is the set of indices $i$ such that the $i$th row of $\sigma$ is nonzero, and $J(\sigma)\subset[n]$ is the set of indices $j$ such that the $j$th column of $\sigma$ is nonzero. We denote by $\sigma^\star$ the $|I(\sigma)|\times|J(\sigma)|$ matrix obtained from $\sigma$ by removing all zero rows and zero columns. Crucially, we define the $n\times n$ matrix $\tilde{\sigma}=(\tilde{\sigma}_{i,j})$ by setting $\tilde{\sigma}_{i,j}=1$ if $\sigma_{i,j}\neq0$ and $\tilde{\sigma}_{i,j}=0$ otherwise. This matrix essentially records the positions of the nonzero entries of $\sigma$. For instance,
\[\sigma=\scalebox{0.8}{$\begin{pmatrix}\zerc&\zerc&\zerc&\zerc&\zerc\\\zerc&\zerc&\zerc&\zerc&{\blue2}\\{\blue5}&\zerc&\zerc&\zerc&\zerc\\\zerc&\zerc&{\blue3}&\zerc&\zerc\\\zerc&\zerc&\zerc&\zerc&\zerc\end{pmatrix}$};\qquad\sigma^\star=\scalebox{0.8}{$\begin{pmatrix}\zerc&\zerc&{\blue2}\\{\blue5}&\zerc&\zerc\\\zerc&{\blue3}&\zerc\end{pmatrix}$};\qquad\tilde{\sigma}=\scalebox{0.8}{$\begin{pmatrix}\zerc&\zerc&\zerc&\zerc&\zerc\\\zerc&\zerc&\zerc&\zerc&1\\1&\zerc&\zerc&\zerc&\zerc\\\zerc&\zerc&1&\zerc&\zerc\\\zerc&\zerc&\zerc&\zerc&\zerc\end{pmatrix}$}.\]
Observe that $\QQ$ is the preimage of $\R$ under the map $\sigma\mapsto\tilde{\sigma}$. More precisely, each subset $\QQ^r$ is the preimage of $\R^r$. Following \cite{So90}, given $\sigma=(\sigma_{i,j})\in\QQ$ and $i\in I(\sigma)$, we denote by $i\sigma$ the unique element in $J(\sigma)$ such that $\sigma_{i,i\sigma}$ is nonzero. Similarly, for each $j\in J(\sigma)$ we denote by $\sigma j$ the unique element in $I(\sigma)$ such that $\sigma_{\sigma j,j}$ is nonzero. With this notation, $\sigma$ admits the decompositions:\[\sigma=\sum_{i\in I(\sigma)}\sigma_{i,i\sigma}E_{i,i\sigma}=\sum_{j\in J(\sigma)}\sigma_{\sigma j,j}E_{\sigma j,j}.\]Therefore, analogously to \cite[Equation (3.9)]{So90}, we obtain:\begin{equation}\label{035}E_{i,j}\sigma=\begin{cases}
\sigma_{j,\,j\sigma}E_{i,j\sigma}&\text{if }j\in I(\sigma)\\
0&\text{otherwise}
\end{cases}
\qquad
\sigma E_{i,j}=\begin{cases}
\sigma_{\sigma i,i}E_{\sigma i,j}&\text{if }i\in J(\sigma)\\
0&\text{otherwise}.
\end{cases}\end{equation}

As mentioned in \cite[Cap\'itulo 4]{Jor22}, the elements of $\QQ$ can be represented by the diagrams of $\F_q(\I_n)$. Indeed, each $\sigma=(\sigma_{i,j})\in\QQ$ corresponds to the framed partial permutation in which $\{i,n+j\}$ is a line labelled by $\sigma_{i,j}$ if and only if $\sigma_{i,j}$ is nonzero. This correspondence yields an isomorphism between $\QQ$ and the $n$th framed symmetric inverse monoid. See Figure~\ref{030}. In this setting, labels are encoded by diagonal matrices. Namely, for every $\sigma\in\QQ$ there is a, not necessarily unique, diagonal matrix $t=(t_{i,j})$ such that $t_{i,i}=\sigma_{i,i\sigma}$ for all $i\in I(\sigma)$, and $\sigma=t\tilde{\sigma}=\tilde{\sigma}t^\sigma$, where\begin{gather}\label{044}t=\sum_{i\in[n]}t_{i,i}E_{i,i}\quad\text{ and }\quad t^\sigma=\sum_{i\in[n]}t_{i,i}E_{i\hat{\sigma},i\hat{\sigma}}=\sum_{j\in[n]}t_{\hat{\sigma}j,\hat{\sigma}j}E_{j,j}.\end{gather}Here, $\hat{\sigma}$ denotes the unique permutation matrix satisfying $i\hat{\sigma}=i\sigma$ for all $i\in I(\sigma)$, and $i\hat{\sigma}<j\hat{\sigma}$ whenever $i,j\in[n]\setminus I(\sigma)$ with $i<j$. In particular, we write $t$ as $t_I$ when it is the unique diagonal matrix satisfying $\sigma=t\tilde{\sigma}$ and $t_{i,i}=1$ for all $i\in[n]\setminus I(\sigma)$. Similarly, we write $t^\sigma$ as $t_J$ when it is the unique diagonal matrix satisfying $\sigma=\tilde{\sigma}t_J$ and $t^\sigma_{i,i}=1$ for all $i\in[n]\setminus J(\sigma)$. For instance,
\[\begin{array}{ccccccc}
\begin{psmallmatrix}\zerc&\zerc&\zerc&\zerc&\zerc\\\zerc&\zerc&\zerc&\zerc&{\blue2}\\{\blue5}&\zerc&\zerc&\zerc&\zerc\\\zerc&\zerc&{\blue3}&\zerc&\zerc\\\zerc&\zerc&\zerc&\zerc&\zerc\end{psmallmatrix}&=&
\begin{psmallmatrix}1&\zerc&\zerc&\zerc&\zerc\\\zerc&{\blue2}&\zerc&\zerc&\zerc\\\zerc&\zerc&{\blue5}&\zerc&\zerc\\\zerc&\zerc&\zerc&{\blue3}&\zerc\\\zerc&\zerc&\zerc&\zerc&1\end{psmallmatrix}&
\begin{psmallmatrix}\zerc&\zerc&\zerc&\zerc&\zerc\\\zerc&\zerc&\zerc&\zerc&1\\1&\zerc&\zerc&\zerc&\zerc\\\zerc&\zerc&1&\zerc&\zerc\\\zerc&\zerc&\zerc&\zerc&\zerc\end{psmallmatrix}&=&
\begin{psmallmatrix}\zerc&\zerc&\zerc&\zerc&\zerc\\\zerc&\zerc&\zerc&\zerc&1\\1&\zerc&\zerc&\zerc&\zerc\\\zerc&\zerc&1&\zerc&\zerc\\\zerc&\zerc&\zerc&\zerc&\zerc\end{psmallmatrix}&
\begin{psmallmatrix}{\blue5}&\zerc&\zerc&\zerc&\zerc\\\zerc&1&\zerc&\zerc&\zerc\\\zerc&\zerc&{\blue3}&\zerc&\zerc\\\zerc&\zerc&\zerc&1&\zerc\\\zerc&\zerc&\zerc&\zerc&{\blue2}\end{psmallmatrix}\\
_{\sigma}&&_{t_I}&_{\tilde{\sigma}}&&_{\tilde{\sigma}}&_{t_J}
\end{array}\]
\begin{figure}[H]\figtwe\caption{Correspondence between $\QQ$ and $\F_q(\I_n)$.}\label{030}\end{figure}

Thus, for all $r\in[n]_0$, we conclude that:\begin{equation}\label{164}\QQ^r=T\R^r=T\R^r=T\R^rT.\end{equation}

Due to the isomorphism mentioned above, in what follows we will identify each $a_i$ with the corresponding diagonal matrix $1+(a-1)E_{i,i}$. For instance, for $n=5$, we have\[a_4=\begin{psmallmatrix}
1&\zerc&\zerc&\zerc&\zerc\\
\zerc&1&\zerc&\zerc&\zerc\\
\zerc&\zerc&1&\zerc&\zerc\\
\zerc&\zerc&\zerc&{\blue a}&\zerc\\
\zerc&\zerc&\zerc&\zerc&1
\end{psmallmatrix}\]

By applying \eqref{070} and \eqref{044}, we obtain a decomposition:\begin{equation}\QQ=\bigsqcup_{k=0}^nWa_1^{m_1}\cdots a_{n-k}^{m_{n-k}}\nu^kW=\bigsqcup_{k=0}^nW\nu^ka_{k+1}^{m_{k+1}}\cdots a_n^{m_n}W.\label{071}\end{equation}

For instance, $q=7$ and $a=3$, we have:\[\begin{array}{cccccc}
\begin{psmallmatrix}\zerc&\zerc&\zerc&\zerc&\zerc\\\zerc&\zerc&\zerc&\zerc&{\blue2}\\{\blue5}&\zerc&\zerc&\zerc&\zerc\\\zerc&\zerc&{\blue3}&\zerc&\zerc\\\zerc&\zerc&\zerc&\zerc&\zerc\end{psmallmatrix}&=&
\begin{psmallmatrix}\zerc&\zerc&\zerc&1&\zerc\\\zerc&\zerc&1&\zerc&\zerc\\1&\zerc&\zerc&\zerc&\zerc\\\zerc&1&\zerc&\zerc&\zerc\\\zerc&\zerc&\zerc&\zerc&1\end{psmallmatrix}&
\begin{psmallmatrix}{\blue5}&\zerc&\zerc&\zerc&\zerc\\\zerc&{\blue3}&\zerc&\zerc&\zerc\\\zerc&\zerc&{\blue2}&\zerc&\zerc\\\zerc&\zerc&\zerc&1&\zerc\\\zerc&\zerc&\zerc&\zerc&1\end{psmallmatrix}&
\begin{psmallmatrix}\zerc&\zerc&1&\zerc&\zerc\\\zerc&\zerc&\zerc&1&\zerc\\\zerc&\zerc&\zerc&\zerc&1\\\zerc&\zerc&\zerc&\zerc&\zerc\\\zerc&\zerc&\zerc&\zerc&\zerc\end{psmallmatrix}&
\begin{psmallmatrix}\zerc&1&\zerc&\zerc&\zerc\\\zerc&\zerc&\zerc&1&\zerc\\1&\zerc&\zerc&\zerc&\zerc\\\zerc&\zerc&1&\zerc&\zerc\\\zerc&\zerc&\zerc&\zerc&1\end{psmallmatrix}\\&&_{s_1s_2s_3s_1s_2}&_{a_1^5a_2a_3^2}&_{\nu^2}&_{s_2s_1s_3}\end{array}\]
Observe that the framed partial permutation corresponding to the transpose matrix $\sigma_*$ of $\sigma\in\QQ$ is precisely the transpose of the framed partial permutation associated to $\sigma$. For instance, if $\sigma$ is the element ${\tt02530}$--${\tt05130}$ in Figure~\ref{030}, then the framed partial permutation associated with $\sigma_*$ is $(\tt{02530}$--$\tt{05130})_*=\tt{50302}$--$\tt{30402}$.

\subsection{Length function}\label{159}

The \emph{length function} of the rook monoid extends naturally to the generalized rook monoid by setting the length of $\sigma\in\QQ$ to be the length of its support matrix $\tilde{\sigma}$, that is, $\ell(\sigma):=\ell(\tilde{\sigma})$.

Via the isomorphism $\QQ\simeq\F_q(\I_n)$ described above, the notion of inversion for partial permutations carries over naturally to matrices. Specifically, an \emph{inversion} of $\sigma\in\QQ$ is a pair $(i,j)$ with $i<j$ such that $i,j\in I(\sigma)$ and $i\sigma>j\sigma$. The set of inversions of $\sigma$ is denoted by $\inv(\sigma)$, and we write $n(\sigma):=|\inv(\sigma)|$. The number of inversions is preserved under transposition, that is, $n(\sigma_*)=n(\sigma)$.

A \emph{left-inversion} (resp. \emph{right-inversion}) of $\sigma$ is a pair $(i,j)$ with $i<j$ such that $i\not\in I(\sigma)$ and $j\in I(\sigma)$ (resp. $i\in J(\sigma)$ and $j\not\in J(\sigma)$). We denote by $\inv_L(\sigma)$ and $\inv_R(\sigma)$ the sets of left- and right-inversions, respectively, and define\[m_L(\sigma)=|\inv_L(\sigma)|,\qquad m_R(\sigma)=|\inv_R(\sigma)|,\qquad m(\sigma)=m_L(\sigma)+m_R(\sigma).\]Since the length function satisfies $\ell(\sigma)=\ell(\tilde{\sigma})$, the following result is an immediate consequence of \cite[Proposition 2.43 and Lemma 2.25]{So90}.
\begin{pro}\label{031}
For every $\sigma\in\QQ$, we have $\ell(\sigma)=n(\sigma)+m(\sigma)$. Furthermore, if $\sigma\in\QQ^r$, then\[m_L(\sigma)=\sum_{i\in I(\sigma)}(i-1)-\frac{r(r-1)}{2}\quad\text{ and }\quad m_R(\sigma)=\sum_{j\in J(\sigma)}(n-j)-\frac{r(r-1)}{2}.\]
\end{pro}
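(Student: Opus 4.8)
The plan is to reduce the statement for $\sigma \in \QQ$ to the corresponding statement for its support matrix $\tilde\sigma \in \R$, which is precisely \cite[Proposition 2.43 and Lemma 2.25]{So90}. Since $\ell(\sigma) = \ell(\tilde\sigma)$ by definition, and since the sets $I(\sigma) = I(\tilde\sigma)$, $J(\sigma) = J(\tilde\sigma)$, $\inv(\sigma) = \inv(\tilde\sigma)$, $\inv_L(\sigma) = \inv_L(\tilde\sigma)$ and $\inv_R(\sigma) = \inv_R(\tilde\sigma)$ depend only on the positions of the nonzero entries of $\sigma$ (equivalently, only on $\tilde\sigma$), the quantities $n(\sigma)$, $m_L(\sigma)$, $m_R(\sigma)$, $m(\sigma)$ are all equal to their counterparts for $\tilde\sigma$. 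Hence both assertions of the proposition hold for $\sigma$ if and only if they hold for $\tilde\sigma \in \R^r$, where $r = |I(\sigma)| = |J(\sigma)|$ is the rank.

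First I would recall from \cite[Section 2]{So90} that for $w \in \R^r$ the length $\ell(w)$ decomposes as the number of "genuine" inversions $n(w)$ coming from the permutation part $w^\star \in \S_r$, plus the number $m(w)$ of additional crossings forced by the positions of the domain and codomain: each element of $I(w)$ lying above a gap contributes crossings counted by $m_L$, and symmetrically for $J(w)$ and $m_R$. Concretely, writing $v_r = E_{1,\,1+n-r} + \cdots + E_{r,n}$ as in the excerpt, one transports $w$ to $v_r$ by first using a minimal permutation on the left to sort the rows of the domain into $[1,r]$ (contributing $m_L(w)$, which does not interfere with the columns), then a minimal permutation on the right to sort the columns of the codomain into $[n-r+1,n]$ (contributing $m_R(w)$), and finally undoing the permutation $w^\star$ (contributing $n(w)$); additivity of length under this factorization is the content of \cite[Lemma 2.25]{So90}. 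This gives $\ell(w) = n(w) + m_L(w) + m_R(w) = n(w) + m(w)$.

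For the explicit formulas, I would invoke \cite[Proposition 2.43]{So90} directly: the minimal number of adjacent transpositions needed to move the row-indices $I(w) = \{i_1 < \cdots < i_r\}$ into the positions $\{1, \ldots, r\}$ equals $\sum_{k=1}^r (i_k - k) = \sum_{i \in I(w)}(i-1) - \tfrac{r(r-1)}{2}$, which is $m_L(w)$; the dual computation with $J(w) = \{j_1 < \cdots < j_r\}$ moving to $\{n-r+1, \ldots, n\}$ gives $m_R(w) = \sum_{j \in J(w)}(n-j) - \tfrac{r(r-1)}{2}$. Substituting $w = \tilde\sigma$ and using the equalities of the previous paragraph completes the proof.

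The only point requiring a little care — and the main (mild) obstacle — is to check that all the combinatorial data entering the formula genuinely pass through the map $\sigma \mapsto \tilde\sigma$ unchanged: this is immediate since $\tilde\sigma_{i,j} = 1 \iff \sigma_{i,j} \neq 0$, so $\tilde\sigma$ has its nonzero entries in exactly the same positions as $\sigma$, whence $I, J, \inv, \inv_L, \inv_R$ and therefore $n, m_L, m_R, m$ are literally the same for $\sigma$ and $\tilde\sigma$; combined with $\ell(\sigma) := \ell(\tilde\sigma)$ and $\QQ^r$ being the preimage of $\R^r$, the reduction to \cite{So90} is exact and the result follows.
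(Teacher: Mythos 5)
Your proposal is correct and follows exactly the route the paper takes: the paper's entire justification is the one-line remark that $\ell(\sigma)=\ell(\tilde{\sigma})$ by definition, so the statement is an immediate consequence of \cite[Proposition 2.43 and Lemma 2.25]{So90} applied to $\tilde{\sigma}\in\R^r$. Your additional verification that $I$, $J$, $\inv$, $\inv_L$ and $\inv_R$ depend only on the positions of the nonzero entries, and hence pass unchanged through $\sigma\mapsto\tilde{\sigma}$, is exactly the (implicit) content of the paper's reduction.
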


\begin{exm}
Let $\sigma$ to be the element in Figure~\ref{030}. In this case, $n(\sigma)=2$, $m_L(\sigma)=m_R(\sigma)=3$ and hence $m(\sigma)=6$, where\[\inv(\sigma)=\{(2,3),(2,4)\};\qquad\inv_L(\sigma)=\{(1,2),(1,3),(1,4)\};\qquad \inv_R(\sigma)=\{(1,2),(1,4),(3,4)\}.\]According to Proposition~\ref{031}, we get $\ell(\sigma)=8$. Therefore, eight simple transpositions are required to transform $\tilde{\sigma}$ into $v_r$. See Figure~\ref{032}. Note that $\tilde{\sigma}$ is the element in Figure~\ref{027}. The first three red moves correspond to row operations on the matrix, or equivalently, to upward vertex movements on the strand diagram, when multiplying on the left to eliminate left-inversions. Similarly, the blue moves correspond to column operations on the matrix, or to downward vertex movements on the strand diagram, when multiplying on the right to eliminate right-inversions. The last two red moves correspond to multiplication on the left to remove the remaining inversions.
\begin{figure}[H]
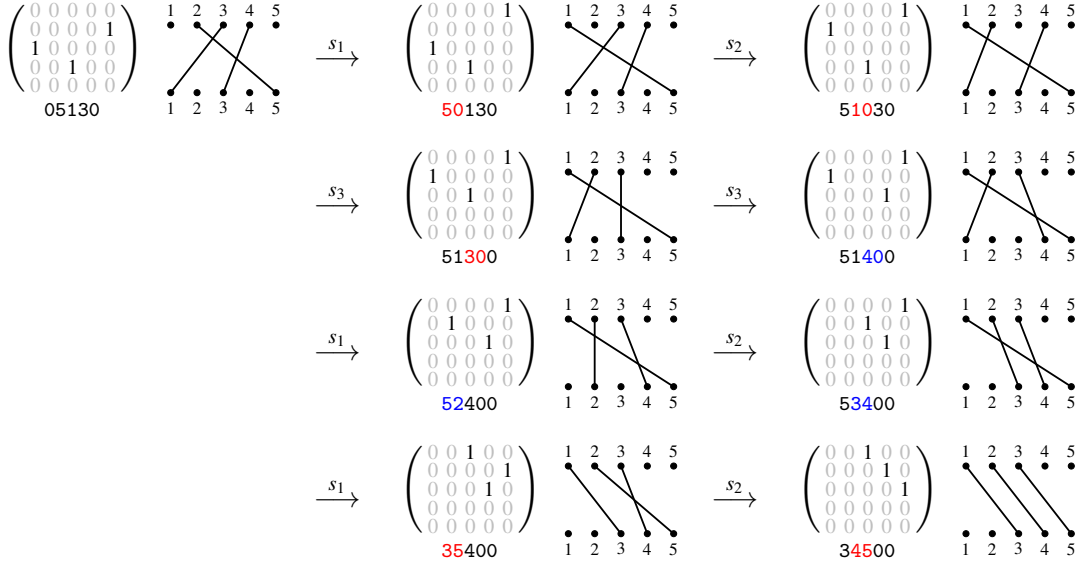
\figttn\caption{Steps to transform $\tilde{\sigma}$ into $v_r=(s_2s_1)(s_3s_2s_1)\tilde{\sigma}(s_3s_1s_2)$.}\label{032}\end{figure}
\end{exm}

\begin{lem}\label{049}
Let $p_1,\ldots,p_{h+k}$ and $q_1,\ldots,q_{h+k}$ be indices in $[n]$, and let $\alpha_1,\ldots,\alpha_{h+k}\in\Fbb_q^\times$. Define the matrix $\tau=\sigma+\varsigma$, where\[\sigma=\alpha_1E_{p_1,q_1}+\cdots+\alpha_hE_{p_h,q_h}\quad\text{and}\quad\varsigma=\alpha_{h+1}E_{p_{h+1},q_{h+1}}+\cdots+\alpha_{h+k}E_{p_{h+k},{q_{h+k}}}.\]If the pairs $(p_i,q_i)$ are all distinct for $i\in\{1,\ldots,h+k\}$, then $\tilde{\tau}=\tilde{\sigma}+\tilde{\varsigma}$, where\[\tilde{\sigma}=E_{p_1,q_1}+\cdots+E_{p_h,q_h}\quad\text{and}\quad\tilde{\varsigma}=E_{p_{h+1},q_{h+1}}+\cdots+E_{p_{h+k},{q_{h+k}}}.\]
\end{lem}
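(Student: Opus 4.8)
The plan is to unwind the definition of the support matrix and observe that the distinctness hypothesis rules out all cancellation. Recall that for an arbitrary matrix $A=(A_{i,j})\in M$ the matrix $\tilde{A}$ is defined entrywise by $\tilde{A}_{i,j}=1$ if $A_{i,j}\neq 0$ and $\tilde{A}_{i,j}=0$ otherwise, so this construction makes sense even when $A$ does not belong to $\QQ$. The only input I would use is the elementary remark that, since the positions $(p_1,q_1),\ldots,(p_{h+k},q_{h+k})$ are pairwise distinct, the elementary matrices $E_{p_1,q_1},\ldots,E_{p_{h+k},q_{h+k}}$ have pairwise disjoint supports; hence for any scalars $c_1,\ldots,c_{h+k}\in\Fbb_q^\times$, the matrix $\sum_i c_iE_{p_i,q_i}$ has $(p_i,q_i)$-entry equal to $c_i\neq 0$ and all remaining entries equal to $0$, with no cancellation occurring.

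Applying this remark to $\tau=\sum_{i=1}^{h+k}\alpha_iE_{p_i,q_i}$, whose coefficients $\alpha_i$ lie in $\Fbb_q^\times$, yields immediately $\tilde{\tau}=\sum_{i=1}^{h+k}E_{p_i,q_i}$. The same remark applied to the sub-collection $i\in\{1,\ldots,h\}$ gives $\tilde{\sigma}=E_{p_1,q_1}+\cdots+E_{p_h,q_h}$, and applied to the sub-collection $i\in\{h+1,\ldots,h+k\}$ gives $\tilde{\varsigma}=E_{p_{h+1},q_{h+1}}+\cdots+E_{p_{h+k},q_{h+k}}$. It then remains to add these two matrices: because all $h+k$ pairs are distinct, in particular no $(p_i,q_i)$ with $i\leq h$ equals any $(p_j,q_j)$ with $j>h$, so $\tilde{\sigma}+\tilde{\varsigma}$ has entries in $\{0,1\}$ and equals $\sum_{i=1}^{h+k}E_{p_i,q_i}=\tilde{\tau}$, as claimed.

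There is essentially no obstacle in this argument: its content is the bookkeeping statement that passing to the support matrix commutes with forming $\Fbb_q^\times$-linear combinations of elementary matrices placed at distinct positions. The single place where the full hypothesis is needed — rather than merely distinctness within each of the two blocks separately — is the final addition step, where one must ensure that $\tilde{\sigma}+\tilde{\varsigma}$ produces no entry equal to $2$.
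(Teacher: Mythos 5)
Your proof is correct and follows essentially the same route as the paper's: both arguments rest on the single observation that pairwise distinct positions preclude any cancellation, so the support matrix of $\sum_i\alpha_iE_{p_i,q_i}$ is $\sum_iE_{p_i,q_i}$, applied to $\sigma$, $\varsigma$, and $\tau$ in turn. Your version is slightly more explicit about the final addition step (no entry equal to $2$) and about the fact that the tilde construction extends to arbitrary matrices, but the content is identical.
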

\begin{proof}
As each $E_{p_i,q_i}$ occurs only once in $\sigma$, then $\alpha_i=\sigma_{p_i,q_i}$ for all $i\in[h]$. Hence, $\tilde{\sigma}=E_{p_1,q_1}+\cdots+E_{p_h,q_h}$. A similar argument yields $\tilde{\varsigma}=E_{p_{h+1},q_{h+1}}+\cdots+E_{p_{h+k},{q_{h+k}}}$, and therefore $\tilde{\tau}=E_{p_1,q_1}+\cdots+E_{p_{h+k},{q_{h+k}}}$.
\end{proof}

\begin{lem}\label{050}
Let $\sigma\in\R$ and $\varsigma\in M$.
\begin{enumerate}
\item If $\tau=\sigma\varsigma$, then $\tilde{\tau}=\sigma\tilde{\varsigma}$.
\item If $\tau=\varsigma\sigma$, then $\tilde{\tau}=\tilde{\varsigma}\sigma$.
\end{enumerate}
\end{lem}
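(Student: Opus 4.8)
The plan is to reduce both statements to Lemma~\ref{049} by expanding the matrix products into elementary matrices and observing that, since $\sigma$ is a rook matrix, no two surviving elementary summands share a position; hence passing to supports commutes with the multiplication.

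For part (1), since $\sigma\in\R$ has all nonzero entries equal to $1$, I would write $\sigma=\sum_{j\in J(\sigma)}E_{\sigma j,\,j}$, so that $\tau=\sigma\varsigma=\sum_{j\in J(\sigma)}E_{\sigma j,\,j}\varsigma$. Using \eqref{081} together with the decomposition \eqref{029} one gets $E_{\sigma j,\,j}\varsigma=\sum_{k\in[n]}\varsigma_{j,\,k}E_{\sigma j,\,k}$, and therefore $\tau=\sum_{j\in J(\sigma)}\sum_{k\in[n]}\varsigma_{j,\,k}E_{\sigma j,\,k}$, which after discarding zero terms is a sum of scalar multiples of the elementary matrices $E_{\sigma j,\,k}$ over the pairs $(j,k)$ with $j\in J(\sigma)$ and $\varsigma_{j,\,k}\neq0$. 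The crucial point is that the map $j\mapsto\sigma j$ is injective on $J(\sigma)$: two distinct nonzero columns of a rook matrix cannot carry their unique $1$ in the same row. Consequently the positions $(\sigma j,\,k)$ appearing above are pairwise distinct, so Lemma~\ref{049} applies and yields $\tilde\tau=\sum_{j\in J(\sigma)}\sum_{k:\,\varsigma_{j,\,k}\neq0}E_{\sigma j,\,k}$. Running the identical computation with $\tilde\varsigma$ in place of $\varsigma$ gives $\sigma\tilde\varsigma=\sum_{j\in J(\sigma)}\sum_{k\in[n]}\tilde\varsigma_{j,\,k}E_{\sigma j,\,k}$, and since $\tilde\varsigma_{j,\,k}=1$ precisely when $\varsigma_{j,\,k}\neq0$, this agrees with $\tilde\tau$.

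Part (2) is the mirror computation on columns: write $\sigma=\sum_{i\in I(\sigma)}E_{i,\,i\sigma}$, use $\varsigma E_{i,\,i\sigma}=\sum_{k\in[n]}\varsigma_{k,\,i}E_{k,\,i\sigma}$, and invoke the injectivity of $i\mapsto i\sigma$ on $I(\sigma)$ to again apply Lemma~\ref{049}; alternatively it follows from (1) applied to $\sigma_*\in\R$ and $\varsigma_*$, via $(XY)_*=Y_*X_*$ and $\widetilde{X_*}=(\tilde X)_*$. I do not expect a genuine obstacle here: the only thing that must be handled carefully is the bookkeeping ensuring no cancellation occurs among the elementary summands, and this is exactly what the rook hypothesis on $\sigma$ — at most one nonzero entry in each row and column — guarantees, so that Lemma~\ref{049} does the rest.
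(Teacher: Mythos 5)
Your proof is correct and follows essentially the same route as the paper's: expand $\sigma\varsigma$ into elementary matrices via \eqref{081}, note that the resulting positions are pairwise distinct because a rook matrix has at most one nonzero entry per row, and apply Lemma~\ref{049} (the paper likewise handles part (2) by the symmetric column argument). Your explicit remark about discarding zero terms before invoking Lemma~\ref{049}, and the alternative derivation of (2) via transposition, are harmless refinements of the same argument.
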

\begin{proof}
Let $I(\sigma)=\{p_1,\ldots,p_r\}$ and $J(\sigma)=\{q_1,\ldots,q_r\}$. Then\[\sigma\varsigma=\sum_{k\in[r]}E_{p_k,q_k}\varsigma=\sum_{k\in[r]}\sum_{i,j\in[n]}\varsigma_{i,j}E_{p_k,q_k}E_{i,j}=\sum_{k\in[r]}\sum_{j\in[n]}\varsigma_{q_k,j}E_{p_k,j}.\]Since $\sigma\in\R$, then $p_1,\ldots,p_r$ are all distinct. Thus, by applying Lemma~\ref{049}, we get\[\tilde{\tau}=\sum_{k\in[r]}\sum_{j\in[n]}\tilde{\varsigma}_{q_k,j}E_{p_k,j}=\sigma\tilde{\varsigma}.\]The second identity follows because $q_1,\ldots,q_r$ are also distinct, which allow us to apply Lemma~\ref{049} again.
\end{proof}

\begin{lem}\label{051}
Let $\sigma\in\QQ$ and $i\in[n-1]$.
\begin{enumerate}
\item If $\ell(s_i\sigma)=\ell(\sigma)$, then $s_i\sigma=\sigma$.\label{052}
\item If $\ell(\sigma s_i)=\ell(\sigma)$, then $\sigma s_i=\sigma$.\label{053}
\item $\ell(\nu_{n-1}\sigma)\leq\ell(\sigma)$.\label{054}
\end{enumerate}
\end{lem}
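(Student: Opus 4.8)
The plan is to reduce each of the three statements to the corresponding fact for the support matrix $\tilde\sigma\in\R$, using the identity $\ell(\sigma)=\ell(\tilde\sigma)$ together with Lemma~\ref{050}, and then to transport the conclusion back to $\QQ$ by keeping track of the diagonal factors $t_I$, $t_J$ of $\sigma$.

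For~\eqref{052}, since $s_i\in W\subseteq\R$, Lemma~\ref{050}(1) gives $\widetilde{s_i\sigma}=s_i\tilde\sigma$, hence $\ell(s_i\tilde\sigma)=\ell(s_i\sigma)=\ell(\sigma)=\ell(\tilde\sigma)$. First I would prove the rook-monoid version: for $\rho\in\R$, the equality $\ell(s_i\rho)=\ell(\rho)$ forces $s_i\rho=\rho$. This is a short case analysis from the formula $\ell=n+m_L+m_R$ of Proposition~\ref{031}, according to how many of $i,i+1$ belong to $I(\rho)$: if both do, left multiplication by $s_i$ only interchanges the two images, leaving $m_L,m_R$ fixed and changing $n$ by exactly $\pm1$ (as for permutations); if exactly one does, then $n$ and $m_R$ are unchanged while $m_L$ changes by $\pm1$; and if neither does, rows $i$ and $i+1$ of $\rho$ vanish, so $s_i\rho=\rho$. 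Thus $\ell(s_i\rho)=\ell(\rho)$ occurs only in the last case. Applying this to $\rho=\tilde\sigma$ yields $i,i+1\notin I(\sigma)$, so the diagonal matrix $t_I$ with $\sigma=t_I\tilde\sigma$ satisfies $(t_I)_{i,i}=(t_I)_{i+1,i+1}=1$; therefore $s_i$ commutes with $t_I$ and $s_i\sigma=s_it_I\tilde\sigma=t_Is_i\tilde\sigma=t_I\tilde\sigma=\sigma$. Part~\eqref{053} is the mirror image: Lemma~\ref{050}(2) gives $\widetilde{\sigma s_i}=\tilde\sigma s_i$, the above case analysis applied with $J(\rho)$ in place of $I(\rho)$ (equivalently, via transposition) shows $\rho s_i=\rho$, forcing $i,i+1\notin J(\sigma)$, and the factorization $\sigma=\tilde\sigma t_J$ with $(t_J)_{i,i}=(t_J)_{i+1,i+1}=1$ lets $s_i$ pass through $t_J$ to conclude $\sigma s_i=\sigma$.

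For~\eqref{054}, recall $\nu_{n-1}=\nu=E_{1,2}+\cdots+E_{n-1,n}\in\R^{n-1}$, so by Lemma~\ref{050}(1) it suffices to prove $\ell(\nu\rho)\le\ell(\rho)$ for all $\rho\in\R$; here $\nu\rho$ is $\rho$ with every row moved up by one and the last row deleted. I would split according to whether $1\in I(\rho)$. If $1\notin I(\rho)$, then $\nu\rho\in\R^r$ with $I(\nu\rho)=\{i-1:i\in I(\rho)\}$ and $J(\nu\rho)=J(\rho)$, and Proposition~\ref{031} gives $n(\nu\rho)=n(\rho)$, $m_R(\nu\rho)=m_R(\rho)$, $m_L(\nu\rho)=m_L(\rho)-r$, so $\ell(\nu\rho)=\ell(\rho)-r\le\ell(\rho)$. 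If $1\in I(\rho)$ with $1\rho=c$, then $\nu\rho\in\R^{r-1}$ with domain $\{i-1:i\in I(\rho)\setminus\{1\}\}$ and codomain $J(\rho)\setminus\{c\}$; a direct computation with the formulas of Proposition~\ref{031} gives $m_L(\nu\rho)=m_L(\rho)$, $m_R(\nu\rho)=m_R(\rho)-(n-c)+(r-1)$, and $n(\nu\rho)=n(\rho)-d$, where $d=|\{j\in J(\rho):j<c\}|$ is the number of inversions of $\rho$ of the form $(1,j)$. Hence $\ell(\rho)-\ell(\nu\rho)=d+(n-c)-(r-1)$, which is nonnegative because the $r-1-d$ elements of $J(\rho)$ larger than $c$ all lie in $[c+1,n]$, whence $r-1-d\le n-c$. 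Therefore $\ell(\nu_{n-1}\sigma)=\ell(\nu\tilde\sigma)\le\ell(\tilde\sigma)=\ell(\sigma)$.

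The essentially routine parts are~\eqref{052} and~\eqref{053}: once the rook-monoid statement is in hand, the only subtlety is to check that the relevant diagonal entries of $t_I$ (resp.\ $t_J$) equal $1$, which is exactly what makes the diagonal factor commute past $s_i$. The real bookkeeping is in~\eqref{054}: the rank-preserving case ($1\notin I(\rho)$) and the rank-dropping case ($1\in I(\rho)$) behave quite differently, and in the latter the inequality is not termwise --- it holds only after balancing the gain $d$ in inversions against the potential loss $(n-c)-(r-1)$ coming from $m_R$, via the counting estimate on $J(\rho)$.
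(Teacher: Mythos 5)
Your proof is correct and follows the same overall strategy as the paper: reduce from $\QQ$ to the support matrix $\tilde\sigma\in\R$ via $\ell(\sigma)=\ell(\tilde\sigma)$ and Lemma~\ref{050}, establish the statement in the rook monoid, and transport it back using the diagonal factorization of $\sigma$. The only real difference is that the paper simply cites \cite[Corollary 2.44]{So90} for parts~\eqref{052}--\eqref{053} and \cite[Lemma 2.50]{So90} for part~\eqref{054}, whereas you reprove both rook-monoid facts from scratch using the length formula of Proposition~\ref{031}; your case analysis on $|\{i,i+1\}\cap I(\rho)|$ and your bookkeeping of $n$, $m_L$, $m_R$ under left multiplication by $\nu$ (including the balancing inequality $r-1-d\le n-c$ in the rank-dropping case) are all correct, so your write-up is a self-contained version of the same argument. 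Two cosmetic remarks: the paper's final step in part~\eqref{052} uses the right factorization $\sigma=\tilde\sigma t_J$, which lets $s_i$ act without any commutation, slightly more economically than your route through $\sigma=t_I\tilde\sigma$ and the commutation of $s_i$ with $t_I$ (though your route is equally valid, since $i,i+1\notin I(\sigma)$ forces the relevant diagonal entries to be $1$); and your description of $\nu\rho$ as ``$\rho$ with every row moved up by one and the last row deleted'' should say that it is the \emph{first} row of $\rho$ that is lost (the last row of the product becomes zero) --- your subsequent split on whether $1\in I(\rho)$ shows you are using the correct operation.
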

\begin{proof}
\eqref{052} Since $s_i\in\QQ$, Lemma~\ref{050} implies $\ell(s_i\sigma)=\ell(s_i\tilde{\sigma})$. Thus $\ell(s_i\tilde{\sigma})=\ell(\tilde{\sigma})$, and by \cite[Corollary 2.44]{So90} we have $s_i\tilde{\sigma}=\tilde{\sigma}$. Therefore $s_i\sigma=s_i\tilde{\sigma}t_J=\tilde{\sigma}t_J=\sigma$. Claim~\eqref{053} follows by a similar argument.

\eqref{054} Since $\nu_{n-1}\in\R$, Lemma~\ref{050} and \cite[Lemma 2.50]{So90} yield $\ell(\nu_{n-1}\sigma)=\ell(\nu_{n-1}\tilde{\sigma})\leq\ell(\tilde{\sigma})=\ell(\sigma)$.
\end{proof}

\subsection{Double classes}\label{160}

Recall that $B=UT=TU$, where $T$ is the subgroup of diagonal matrices and $U$ is the subgroup of upper unitriangular matrices.

\begin{pro}[{\cite[Teorema 4.0.1]{Jor22}}]\label{033}
We have\[M=\bigsqcup_{\sigma\in\QQ}U\sigma U.\]Moreover, if $\sigma,\sigma'\in\QQ$ satisfy $U\sigma U=U\sigma' U$, then $\sigma=\sigma'$.
\end{pro}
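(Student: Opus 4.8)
The plan is to derive the decomposition from Solomon's Borel double coset decomposition $M=\bigsqcup_{\pi\in\R}B\pi B$ \cite{So90} by controlling how each cell $B\pi B$ splits into double cosets of $U$, the key tool being a cardinality count. Fix $\pi\in\R$ of rank $r$ and set $\QQ_\pi:=\{\tau\in\QQ\mid\tilde\tau=\pi\}$, so that $\QQ=\bigsqcup_{\pi\in\R}\QQ_\pi$. A first elementary computation, using that $(s\pi t)_{i,\,j}=s_{i,\,i}\,\pi_{i,\,j}\,t_{j,\,j}$ for diagonal $s,t$, shows $\QQ_\pi=\{t\pi\mid t\in T\}=\{s\pi t\mid s,t\in T\}$ and $|\QQ_\pi|=(q-1)^r$. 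Since $B=TU=UT$ and $T$ normalizes $U$, this yields $B\pi B=T(U\pi U)T=\bigcup_{s,t\in T}U(s\pi t)U=\bigcup_{\tau\in\QQ_\pi}U\tau U$, and feeding this into Solomon's decomposition gives \[M=\bigcup_{\pi\in\R}B\pi B=\bigcup_{\pi\in\R}\;\bigcup_{\tau\in\QQ_\pi}U\tau U=\bigcup_{\sigma\in\QQ}U\sigma U,\] which is the existence half of the statement. (Alternatively, existence admits a direct proof by Gaussian elimination: take the bottom-most nonzero entry of the first nonzero column of a matrix as a pivot, clear its row to the right and its column above by column and row operations lying in $U$, and recurse on the complementary submatrix.)

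Disjointness of the $U\sigma U$ then reduces, since distinct $\pi$ give double cosets inside the disjoint cells $B\pi B$, to showing that for a fixed $\pi$ the $(q-1)^r$ sets $U\tau U$ with $\tau\in\QQ_\pi$ are pairwise disjoint. For this I would count. First, for $\tau=s\pi t\in\QQ_\pi$ one has $U\tau U=s(U\pi U)t$, so all these sets have the same cardinality $|U\pi U|$, as left and right multiplication by invertible matrices are bijections of $M$. Second, I would prove the identity $|B\pi B|=(q-1)^r|U\pi U|$: letting $B\times B$ (resp.\ $U\times U$) act on $M$ by $(g_1,g_2)\cdot m=g_1 m g_2^{-1}$, the orbit--stabilizer theorem gives $|B\pi B|=|B|^2/|Z_B(\pi)|$ and $|U\pi U|=|U|^2/|Z_U(\pi)|$, where $Z_B(\pi):=\{(g_1,g_2)\mid g_1\pi=\pi g_2\}$ and $Z_U(\pi):=Z_B(\pi)\cap(U\times U)$; as $|B|=(q-1)^n|U|$, the identity is equivalent to $|Z_B(\pi)|=(q-1)^{2n-r}|Z_U(\pi)|$.

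The core of the argument, and the step I expect to be the main obstacle, is this stabilizer identity: I must show that $Z_B(\pi)$ is the product of its torus part $Z_B(\pi)\cap(T\times T)$ and its unipotent part $Z_U(\pi)$, which meet trivially, whence the order is the product of the two orders. The torus part is readily counted: $s_1\pi=\pi s_2$ for diagonal $s_1,s_2$ forces $(s_1)_{i,\,i}=(s_2)_{i\pi,\,i\pi}$ for $i\in I(\pi)$ and leaves the remaining $2(n-r)$ diagonal entries arbitrary, so it has order $(q-1)^{2n-r}$. For the factorization, given $(g_1,g_2)\in Z_B(\pi)$ I would take the Borel decompositions $g_1=s_1u_1$, $g_2=s_2u_2$ with $s_i\in T$, $u_i\in U$; comparing the $(i,i\pi)$-entries of $g_1\pi=\pi g_2$ for $i\in I(\pi)$ yields $(g_1)_{i,\,i}=(g_2)_{i\pi,\,i\pi}$, i.e.\ $(s_1)_{i,\,i}=(s_2)_{i\pi,\,i\pi}$, hence $s_1\pi=\pi s_2$, so $(s_1,s_2)\in Z_B(\pi)\cap(T\times T)$ and therefore $(u_1,u_2)=(s_1,s_2)^{-1}(g_1,g_2)\in Z_B(\pi)\cap(U\times U)=Z_U(\pi)$, with $(g_1,g_2)=(s_1,s_2)(u_1,u_2)$.

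Finally I would conclude by a counting sandwich: $B\pi B$ is the union of the $(q-1)^r$ sets $U\tau U$ with $\tau\in\QQ_\pi$, each of cardinality $|U\pi U|$, and its cardinality equals $(q-1)^r|U\pi U|$; hence these sets are pairwise disjoint. Thus distinct elements of $\QQ_\pi$ produce disjoint double cosets, distinct $\pi$ produce double cosets inside the disjoint cells $B\pi B$, and $\{U\sigma U\mid\sigma\in\QQ\}$ is a partition of $M$. In particular $U\sigma U=U\sigma' U$ with $\sigma,\sigma'\in\QQ$ forces $\sigma=\sigma'$.
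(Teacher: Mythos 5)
Your proof is correct, but it takes a genuinely different route from the paper's. The paper proves existence exactly as in your parenthetical aside -- Solomon's reduction of an arbitrary matrix to an element of $\QQ$ by row operations from below and column operations from the left, all implemented by elements of $U$ -- and then disposes of uniqueness directly by observing that such operations cannot change the positions of the nonzero entries of an element of $\QQ$, so that $\sigma'\in U\sigma U$ forces $\tilde\sigma=\tilde{\sigma}'$ and then $\sigma=\sigma'$ since $T\cap U=\{1\}$. You instead bootstrap from Solomon's Borel decomposition $M=\bigsqcup_{\pi\in\R}B\pi B$ and settle uniqueness by a counting sandwich, whose crux is the identity $|B\pi B|=(q-1)^r|U\pi U|$ obtained from orbit--stabilizer together with the Levi-type factorization $Z_B(\pi)=\bigl(Z_B(\pi)\cap(T\times T)\bigr)\cdot Z_U(\pi)$; all the steps there check out (the comparison of $(i,i\pi)$-entries correctly isolates the diagonal parts because $\pi$ has at most one nonzero entry per row and column). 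The paper's argument is shorter but leans on the cited thesis for the claim that the reduction preserves pivot positions and values; yours is longer but self-contained modulo the Borel decomposition, and it has the side benefit of establishing $|B\tilde\sigma B|=(q-1)^r|U\sigma U|$ and $|U\sigma U|=|U\tau U|$ for $\tilde\sigma=\tilde\tau$ \emph{before} the double coset decomposition is known -- facts the paper only derives afterwards (Lemmas~\ref{034} and~\ref{040}) using Proposition~\ref{033} itself. So your route would let those counting lemmas be read off without that dependence.
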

\begin{proof}
According to \cite[Proposition 3.1]{So90}, for every $m\in M$, there are matrices $u_1,u_2\in U$ such that $\sigma=u_1mu_2$ for some $\sigma\in\QQ$. This is achieved by applying a sequence of elementary operations in which addition of rows is done from below to above and addition of columns is done from left to right.

Furthermore, suppose $\sigma,\sigma'\in\QQ$ such that $\sigma'\in U\sigma U$. Then $\sigma'$ may be obtained from $\sigma$ by a sequence of elementary operations as it is described above. We conclude that the nonzero entries of $\sigma$ and $\sigma'$ are in the same positions. Since $T\cap U=\{1\}$, it follows that $\sigma=\sigma'$, which completes the proof.
\end{proof}

For each $\sigma\in\QQ$ and $a,b\in\{0,1\}$, define\[I_{a,b}(\sigma)=\{(i,j)\in I_a(\sigma)\times I_b(\sigma)\mid i<j\}\quad\text{and}\quad J_{a,b}(\sigma)=\{(i,j)\in J_a(\sigma)\times J_b(\sigma)\mid i<j\},\]where $I_1(\sigma)=I(\sigma)$, $I_0(\sigma)=[n]\setminus I(\sigma)$, $J_1(\sigma)=J(\sigma)$ and $J_0(\sigma)=[n]\setminus J(\sigma)$. Diagrammatically, the set $I_0(\sigma)$ represents the upper points of $\sigma$, and similarly, the set $J_0(\sigma)$ represents the bottom points of $\sigma$. Observe that $I_{0,1}(\sigma)=\inv_L(\sigma)$ and $J_{1,0}(\sigma)=\inv_R(\sigma)$. Also, the set $I_{1,1}(\sigma)$ contains $\inv(\sigma)$, and the set $J_{1,1}(\sigma)$ contains $\inv(\sigma_*)$. For instance, for $\sigma=\tt{02530}$--$\tt{05130}$ as in Figure~\ref{030}, we have:\begin{gather*}I_{0,0}(\sigma)=\{(1,5)\};\quad I_{1,0}(\sigma)=\{(2,5),(3,5),(4,5)\};\quad I_{1,1}(\sigma)=\{(2,3),(2,4),(3,4)\};\\ J_{0,0}(\sigma)=\{(2,4)\};\quad J_{0,1}(\sigma)=\{(2,3),(2,5)\};\qquad J_{1,1}(\sigma)=\{(1,3),(1,5),(3,5)\}.\end{gather*}

\begin{lem}\label{034}
Let $\sigma,\tau\in\QQ^r$ be such that $\tilde{\sigma}=\tilde{\tau}$. Then, the map $u\sigma u'\mapsto u\tau u'$ defines a bijection between the double classes $U\sigma U$ and $U\tau U$. Consequently, we have $|U\sigma U|=|U\tau U|$.
\end{lem}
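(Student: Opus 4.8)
My plan is to reduce the statement, using the hypothesis $\tilde{\sigma}=\tilde{\tau}$, to a single left translation by an element of the torus $T$. Since $\sigma,\tau\in\QQ^{r}$, by \eqref{164} we may write $\sigma=t\rho$ and $\tau=t'\rho'$ with $t,t'\in T$ (so invertible) and $\rho,\rho'\in\R^{r}$. Applying the map $x\mapsto\tilde{x}$, and noting that left multiplication by an invertible diagonal matrix rescales the nonzero entries without changing their positions, gives $\rho=\tilde{\sigma}$ and $\rho'=\tilde{\tau}$; hence the hypothesis forces $\rho=\rho'$, and therefore $\tau=d\sigma$ with $d:=t'\,t^{-1}\in T$. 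This is the only place the hypothesis enters, and it reduces the claim to comparing $U\sigma U$ with $U(d\sigma)U$.

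Next I would invoke that $T$ normalises $U$ --- conjugation by a diagonal matrix preserves upper unitriangularity, i.e.\ $U$ is normal in $B=TU$ --- so that $dU=Ud$ as subsets of $M$. Then
\[
U\tau U=U(d\sigma)U=(Ud)\,\sigma\,U=d\,(U\sigma U).
\]
Since $d$ is invertible, $x\mapsto dx$ is a bijection of $M$, and it carries $U\sigma U$ onto $d\,(U\sigma U)=U\tau U$; restricting it yields a bijection $U\sigma U\to U\tau U$, whence $|U\sigma U|=|U\tau U|$, which is the ``consequently'' clause. On a typical element it acts by $u\sigma u'\mapsto du\sigma u'=(dud^{-1})\,\tau\,u'$ with $dud^{-1}\in U$; thus, after the left $U$-factor is reindexed by the inner automorphism $u\mapsto dud^{-1}$ (a bijection of $U$ precisely because $dU=Ud$), this bijection is the assignment $u\sigma u'\mapsto u\tau u'$ of the statement. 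For the cardinality alone one may instead observe that the surjection $(u,u')\mapsto u\sigma u'$ from $U\times U$ onto $U\sigma U$ has all fibres of the same size $|K_{\sigma}|$, where $K_{\sigma}:=\{(v,v')\in U\times U:v\sigma v'=\sigma\}$, and that $(v,v')\mapsto(d^{-1}vd,v')$ identifies $K_{\tau}$ with $K_{\sigma}$, so $|U\sigma U|=|U|^{2}/|K_{\sigma}|=|U|^{2}/|K_{\tau}|=|U\tau U|$.

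The only inputs are \eqref{164} and the normality of $U$ in $B$. The step I expect to require the most care is pinning down how the displayed translation $x\mapsto dx$ is matched with the assignment $u\sigma u'\mapsto u\tau u'$ --- that is, controlling the interaction of the torus element $d$ with $U$ on both sides, encoded in $dU=Ud$; everything else is routine once the single $d\in T$ with $\tau=d\sigma$ has been extracted from $\tilde{\sigma}=\tilde{\tau}$.
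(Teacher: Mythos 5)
Your argument does establish the cardinality claim $|U\sigma U|=|U\tau U|$ correctly, and by a route different from the paper's: you extract a single $d\in T$ with $\tau=d\sigma$ from $\tilde{\sigma}=\tilde{\tau}$ together with \eqref{164} (this extraction is fine, since left multiplication by an invertible diagonal matrix preserves the positions of nonzero entries), and then use $dU=Ud$ to see that left translation by $d$ carries $U\sigma U$ onto $U\tau U$; your fibre-counting variant is also correct. Since only the equality of cardinalities is used later (in Lemma~\ref{040}), this part would suffice for the applications.

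However, the lemma's first assertion --- that the specific assignment $u\sigma u'\mapsto u\tau u'$ is a well-defined bijection --- is not established, and the step where you identify the translation $x\mapsto dx$ with this assignment ``after reindexing'' is where the argument breaks. The translation sends $u\sigma u'$ to $du\sigma u'=(dud^{-1})\,\tau\,u'$, whereas the statement's map sends it to $u\tau u'=ud\,\sigma\,u'$; these are genuinely different elements of $U\tau U$ in general because $du\sigma\neq ud\sigma$. Concretely, for $n=2$, $\sigma=E_{2,1}$, $d=\diag(1,a)$ and $u=1+E_{1,2}$ one gets $du\sigma=aE_{2,1}+E_{1,1}$ but $u\tau=ud\sigma=aE_{2,1}+aE_{1,1}$, so the two maps disagree whenever $a\neq 1$. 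Reindexing the parametrization cannot repair this: the statement's map is defined on \emph{elements} of $U\sigma U$, and the entire content of the lemma is that the output $u\tau u'$ does not depend on which representation $u\sigma u'$ of a given element is chosen. That well-definedness is precisely what the paper proves: writing $u\sigma u'=(ut)\tilde{\sigma}u'$ with $ut\in B$, it invokes the uniqueness statement of \cite[Lemma 3.15]{So90} to conclude that $u\sigma u'=v\sigma v'$ forces $u\tilde{\sigma}=v\tilde{\sigma}$ and $u'=v'$, whence $u\tau u'=v\tau v'$. Your proof contains no analogue of this uniqueness step, so the map named in the statement is never shown to be well defined, let alone bijective.
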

\begin{proof}
Since $\tilde{\sigma}=\tilde{\tau}$, there are $t,t'\in T$ such that $\sigma=t\tilde{\sigma}$ and $\tau=t'\tilde{\sigma}$. By \eqref{164}, there are $d,d'\in T$ such that $\sigma=\tilde{\sigma}d$ and $\tau=\tilde{\sigma}d'$. Now, take $u,u',v,v'\in U$ such that $(ut)\tilde{\sigma} u'=u\sigma u'=v\sigma v'=(vt)\tilde{\sigma} v'$. Since $ut,vt\in UT=B$, \cite[Lemma 3.15]{So90} implies that $(ut)\tilde{\sigma}=(vt)\tilde{\sigma}$ and $u'=v'$. Thus $u\tilde{\sigma}d=v\tilde{\sigma}d$, which yields $u\tilde{\sigma}=v\tilde{\sigma}$. Multiplying by $d'$ on the right gives $u\tilde{\sigma}d'u'=v\tilde{\sigma}d'u'=v\tilde{\sigma}d'v'$, so $u\tau u'=v\tau v'$. The converse implication follows by the same argument, showing that the map $u\sigma u'\mapsto u\tau u'$ is indeed a bijection.
\end{proof}

\begin{lem}\label{040}
For each $\sigma\in\QQ^r$, we have $|U\sigma U|=q^{\frac{r(r-1)}{2}}q^{\ell(\sigma)}$.
\end{lem}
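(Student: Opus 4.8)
The plan is to reduce to the classical rook monoid and then count $|U\sigma U|$ by an orbit--stabilizer argument. First, applying Lemma~\ref{034} with $\tau=\tilde\sigma\in\R^r$ (note $\widetilde{\tilde\sigma}=\tilde\sigma$) gives $|U\sigma U|=|U\tilde\sigma U|$, so it suffices to prove the formula when $\sigma\in\R^r$, say $\sigma=\sum_{i\in I(\sigma)}E_{i,i\sigma}$ with $|I(\sigma)|=|J(\sigma)|=r$. Let $U\times U$ act on $M$ by $(u,u')\cdot m=um(u')^{-1}$; then $U\sigma U$ is the orbit of $\sigma$, so $|U\sigma U|=|U|^2/|S_\sigma|$ with $S_\sigma:=\{(u,u')\in U\times U\mid u\sigma=\sigma u'\}$ and $|U|=q^{n(n-1)/2}$. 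Thus the whole problem reduces to showing $|S_\sigma|=q^{\,n(n-1)-\ell(\sigma)-r(r-1)/2}$.

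To compute $|S_\sigma|$ I would write the equation $u\sigma=\sigma u'$ entrywise. By \eqref{035}, the $(k,l)$-entry of $u\sigma$ is $u_{k,\sigma l}$ when $l\in J(\sigma)$ and $0$ otherwise, and the $(k,l)$-entry of $\sigma u'$ is $u'_{k\sigma,l}$ when $k\in I(\sigma)$ and $0$ otherwise; so $S_\sigma$ is a linear subspace of the $n(n-1)$-dimensional space of above-diagonal coordinates of $(u,u')$, cut out by the linear equations $[l\in J(\sigma)]\,u_{k,\sigma l}=[k\in I(\sigma)]\,u'_{k\sigma,l}$. Sorting these by the two membership conditions: the pairs with $k\notin I(\sigma)$, $l\in J(\sigma)$ impose $u_{k,i}=0$ for $i\in I(\sigma)$, a genuine condition exactly on the pairs of $\inv_L(\sigma)$, killing $m_L(\sigma)$ coordinates of $u$; dually the pairs with $k\in I(\sigma)$, $l\notin J(\sigma)$ kill the $m_R(\sigma)$ coordinates of $u'$ indexed by $\inv_R(\sigma)$; and the pairs with $(k,l)\in I(\sigma)\times J(\sigma)$ read $u_{k,m}=u'_{k\sigma,m\sigma}$ for $k,m\in I(\sigma)$, where for each of the $n(\sigma)$ inversions of $\sigma$ the reading with $k<m$ kills one further coordinate of $u$ and the reading with $k>m$ kills one further coordinate of $u'$, while each of the remaining $\binom r2-n(\sigma)$ pairs $k<m$ in $I(\sigma)$ identifies a still-free coordinate of $u$ with a still-free coordinate of $u'$. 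Summing up, the codimension of $S_\sigma$ inside this coordinate space is $(m_L(\sigma)+n(\sigma))+(m_R(\sigma)+n(\sigma))+(\binom r2-n(\sigma))=m(\sigma)+n(\sigma)+\tfrac{r(r-1)}2$, which equals $\ell(\sigma)+\tfrac{r(r-1)}2$ by Proposition~\ref{031}. Hence $|S_\sigma|=q^{\,n(n-1)-\ell(\sigma)-r(r-1)/2}$ and $|U\sigma U|=q^{n(n-1)}/|S_\sigma|=q^{\,r(r-1)/2+\ell(\sigma)}$, as claimed.

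The main obstacle is precisely the last codimension count: one must verify that the four families of constraints (two coming from $\inv_L(\sigma)$ and $\inv_R(\sigma)$, two coming from the inversions of $\sigma$ in the $k<m$ and $k>m$ readings) involve pairwise distinct coordinates, and that each of the $\binom r2-n(\sigma)$ identifications links a coordinate of $u$ and a coordinate of $u'$ that is otherwise unconstrained; this distinctness is exactly what makes the codimension equal the number of constraints. As an alternative in the spirit of Lemma~\ref{051}, one could instead induct on $\ell(\sigma)$ within a fixed rank $r$: the base case is $\sigma=v_r$, for which $Uv_rU$ is directly seen to be the set of matrices $\left(\begin{smallmatrix}0&u_0\\0&0\end{smallmatrix}\right)$ with $u_0$ an $r\times r$ upper unitriangular matrix, so $|Uv_rU|=q^{r(r-1)/2}$; for the inductive step one picks $s_i$ with $\ell(s_i\sigma)<\ell(\sigma)$ using Lemma~\ref{051} and proves the $U$-analogue $|U\sigma U|=q\,|Us_i\sigma U|$ of the Bruhat multiplication rule $Bs_iB\cdot BwB=Bs_iwB$.
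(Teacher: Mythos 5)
Your argument is correct, but it takes a genuinely different route from the paper. The paper's proof is a two-line reduction to Solomon's count of $B$-double cosets: using \eqref{164} and Lemma~\ref{034} it writes $B\tilde{\sigma}B=\bigsqcup_{\tau\in T\tilde{\sigma}}U\tau U$ as a disjoint union of $(q-1)^r$ $U$-double cosets of equal cardinality, and then divides the known formula $|B\tilde{\sigma}B|=(q-1)^rq^{\frac{r(r-1)}{2}}q^{\ell(\tilde{\sigma})}$ from \cite[Lemma 3.18]{So90} by $(q-1)^r$. You instead share only the first reduction step (Lemma~\ref{034} with $\tau=\tilde{\sigma}$, which is exactly how the paper transfers from $\QQ^r$ to $\R^r$) and then compute $|U\tilde{\sigma}U|$ from scratch by orbit--stabilizer, solving $u\sigma=\sigma u'$ entrywise. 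Your bookkeeping of the stabilizer's codimension checks out: the constraints from $\inv_L(\sigma)$ and $\inv(\sigma)$ hit disjoint coordinates of $u$ (the former have $k\notin I(\sigma)$, the latter $k\in I(\sigma)$), dually for $u'$, and each non-inversion pair in $I(\sigma)$ identifies one otherwise-unconstrained coordinate of $u$ with one of $u'$, giving codimension $m(\sigma)+2n(\sigma)+\bigl(\binom{r}{2}-n(\sigma)\bigr)=\ell(\sigma)+\frac{r(r-1)}{2}$ by Proposition~\ref{031}. What each approach buys: the paper's is shorter and reuses Solomon's machinery, but is only as self-contained as the citation; yours is elementary and independent of \cite[Lemma 3.18]{So90} (in effect it reproves that lemma, since $|B\sigma B|=(q-1)^r|U\sigma U|$ follows), at the cost of the careful distinctness check you rightly flag as the crux. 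The inductive alternative you sketch at the end is essentially the content of Proposition~\ref{037}/Corollary~\ref{039}, but as stated it risks circularity (it would use $|Us_kU|=q|U|$, an instance of the formula being proved), so the direct count is the better of your two options.
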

\begin{proof}
Combining \eqref{164} with Proposition~\ref{033}, Lemma~\ref{034} and the fact that $|T\tilde{\sigma}|=(q-1)^r$, we obtain:\[|B\tilde{\sigma}B|=|UT\tilde{\sigma}TU|=|UT\tilde{\sigma}U|=\sum_{\tau\in T\tilde{\sigma}}|U\tau U|=(q-1)^r|U\sigma U|.\]Moreover, \cite[Lemma 3.18]{So90} shows that $|B\tilde{\sigma}B|=(q-1)^rq^{\frac{r(r-1)}{2}}q^{\ell(\tilde{\sigma})}$. Therefore $|U\sigma U|=q^{\frac{r(r-1)}{2}}q^{\ell(\sigma)}$.
\end{proof}

Given $i,j\in[n]$ with $i\neq j$ and $r\in\Fbb_q^\times$, we define $x_{i,j}(r)=1+rE_{i,j}$. As shown in \cite[Equation (3.8)]{So90}, for every $\sigma\in\QQ$ we have the decomposition $U=U_\sigma\bar{U}_\sigma$ with $U_\sigma\cap\bar{U}_\sigma=\{1\}$, where $U_\sigma$ is the subgroup of $U$ generated by the matrices $x_{i,j}(r)$ with $(i,j)\not\in\inv_R(\sigma)\sqcup\inv(\sigma_*)$, and $\bar{U}_\sigma$ is the subgroup of $U$ generated by the matrices $x_{i,j}(r)$ with $(i,j)\in\inv_R(\sigma)\sqcup\inv(\sigma_*)$. From \eqref{035}, and as in \cite[Equation (3.11)]{So90}, we obtain the following relations:\begin{equation}\label{036}\begin{array}{rcll}
x_{i,j}(r)\sigma&=&\sigma&\quad\text{if }j\not\in I(\sigma);\\[0.1cm]
\sigma x_{i,j}(r)&=&\sigma&\quad\text{if }i\not\in J(\sigma);\\
\end{array}\qquad\begin{array}{rcll}
x_{i,j}(r)\sigma&=&\sigma x_{i\sigma,j\sigma}(\sigma_{j,j\sigma}\sigma_{i,i\sigma}^{-1}r)&\quad\text{if }i,j\in I(\sigma);\\[0.1cm]
\sigma x_{i,j}(r)&=&x_{\sigma i,\sigma j}(\sigma_{\sigma i,i}\sigma_{\sigma j,j}^{-1}r)\sigma&\quad\text{if }i,j\in J(\sigma);\\
\end{array}
\end{equation}

For each $k\in[n-1]$ and $r\in\Fbb_q^\times$, we define the diagonal matrix:\[h_k(r)=1+(-r^{-1}-1)E_{k,k}+(r-1)E_{k+1,k+1}.\]Observe that if $r=a^k$, then\begin{equation}\label{080}h_i(r)=a_i^{\frac{q-1}{2}}a_i^{-k}a_{i+1}^k.\end{equation}

\begin{pro}\label{037}
For each $k\in[n-1]$ and $\sigma\in\QQ$, we have:\[Us_kU\cdot U\sigma U=\begin{cases}
U\sigma U&\text{if }(k,k+1)\in I_{0,0}(\sigma)\\
Us_k\sigma U&\text{if }(k,k+1)\in I_{1,0}(\sigma)\sqcup I_{1,1}(\sigma)\setminus\inv(\sigma)\\
{\displaystyle Us_k\sigma U\,\,\,\cup\,\bigcup_{r\in\Fbb_q^\times}Uh_k(r)\sigma U}&\text{if }(k,k+1)\in I_{0,1}(\sigma)\sqcup\inv(\sigma)
\end{cases}\]
\end{pro}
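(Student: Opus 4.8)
The plan is to reduce both factors to unions of cosets and then, for each resulting coset $UgU$, identify the unique $\rho\in\QQ$ with $g\in U\rho U$ granted by Proposition~\ref{033}. First I would record a coset decomposition of $Us_kU$. Since $I(s_k)=J(s_k)=[n]$, we have $\inv_R(s_k)=\emptyset$ and, as $s_k$ is self-transpose, $\inv((s_k)_*)=\inv(s_k)=\{(k,k+1)\}$; hence in the decomposition $U=U_{s_k}\bar{U}_{s_k}$ recalled above the second factor is $\bar{U}_{s_k}=\{x_{k,k+1}(c)\mid c\in\Fbb_q\}$. Conjugation by $s_k$ only interchanges the indices $k$ and $k+1$ and sends no strictly upper position other than $(k,k+1)$ to a strictly lower one, whence $s_kU_{s_k}s_k^{-1}\subseteq U$ and therefore $Us_kU=Us_k\bar{U}_{s_k}=\bigcup_{c\in\Fbb_q}Us_kx_{k,k+1}(c)$. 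Absorbing the inner product $U\cdot U=U$ then gives $Us_kU\cdot U\sigma U=Us_kU\sigma U=\bigcup_{r\in\Fbb_q}Us_kx_{k,k+1}(r)\sigma U$, so it suffices to compute, for each $r\in\Fbb_q$, the element $\rho_r\in\QQ$ with $s_kx_{k,k+1}(r)\sigma\in U\rho_rU$; clearly $\rho_0=s_k\sigma$, since $x_{k,k+1}(0)=1$ and $s_k\sigma\in\QQ$.

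Next I would dispose of the cases in which $x_{k,k+1}(r)$ passes harmlessly through $\sigma$. If $k+1\notin I(\sigma)$, i.e. $(k,k+1)\in I_{0,0}(\sigma)\sqcup I_{1,0}(\sigma)$, the first relation in \eqref{036} gives $x_{k,k+1}(r)\sigma=\sigma$ for every $r$, so every summand equals $Us_k\sigma U$; and if in addition $k\notin I(\sigma)$ then $s_k\sigma=\sigma$, giving the first case of the statement. If $k,k+1\in I(\sigma)$ with $k\sigma<(k+1)\sigma$, i.e. $(k,k+1)\in I_{1,1}(\sigma)\setminus\inv(\sigma)$, the third relation in \eqref{036} gives $x_{k,k+1}(r)\sigma=\sigma\,x_{k\sigma,(k+1)\sigma}(c_r)$ for a scalar $c_r$, with $x_{k\sigma,(k+1)\sigma}(c_r)\in U$ since $k\sigma<(k+1)\sigma$, so again every summand equals $Us_k\sigma U$. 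As $s_k\sigma\in\QQ$ throughout, these observations establish the first two cases.

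The crux is the third case: $(k,k+1)\in I_{0,1}(\sigma)\sqcup\inv(\sigma)$, equivalently $k+1\in I(\sigma)$ together with either $k\notin I(\sigma)$ or $k\sigma>(k+1)\sigma$. Here $r=0$ contributes $Us_k\sigma U$. For $r\neq0$ I would use the identity $s_kx_{k,k+1}(r)=x_{k+1,k}(r)s_k$ and bring $x_{k+1,k}(r)s_k\sigma$ to its $\QQ$-normal form using the directed elementary operations from the proof of Proposition~\ref{033} (adding a lower row to an upper row, and a left column to a right column). Left multiplication of $s_k\sigma$ by $x_{k+1,k}(r)$ produces a matrix with exactly one repeated column; a single row operation $x_{k,k+1}(-r^{-1})$ clears it when $k\notin I(\sigma)$, while when $k\sigma>(k+1)\sigma$ one further column operation is required, and in both sub-cases a short explicit computation shows that the resulting normal form is precisely $h_k(r)\sigma$, the scalars $-r^{-1}$ and $r$ multiplying rows $k$ and $k+1$ emerging from these operations. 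Hence $\rho_r=h_k(r)\sigma$ for $r\neq0$, and collecting the summands yields $Us_kU\cdot U\sigma U=Us_k\sigma U\cup\bigcup_{r\in\Fbb_q^\times}Uh_k(r)\sigma U$.

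I expect the main obstacle to be exactly this normal-form computation in the third case: one must track where the nonzero entries of $\sigma$ land after the transposition $s_k$ and the shear $x_{k+1,k}(r)$, verify that the permitted (directed) elementary operations terminate at a matrix of $\QQ$, and check that the outcome is the specific torus twist $h_k(r)\sigma$ rather than some other one, uniformly across the $I_{0,1}(\sigma)$ and $\inv(\sigma)$ sub-cases. A useful simplification is that in the $I_{0,1}(\sigma)$ sub-case row $k$ of $\sigma$ is zero, so $h_k(r)\sigma=(1+(r-1)E_{k+1,k+1})\sigma$ and the factor $-r^{-1}$ multiplying the empty row $k$ is immaterial; in particular $h_k(1)\sigma=\sigma$, so $U\sigma U$ itself occurs among the terms of the product.
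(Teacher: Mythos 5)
Your argument is correct, and its skeleton coincides with the paper's: both reduce $Us_kU\cdot U\sigma U$ to $\bigcup_{c\in\Fbb_q}Us_kx_{k,k+1}(c)\sigma U$ via the factorization $U=U_{s_k}\bar U_{s_k}$ with $s_kU_{s_k}s_k=U_{s_k}$, and both dispose of the first two cases by pushing $x_{k,k+1}(c)$ through $\sigma$ using \eqref{036}. The divergence is in the third case. The paper treats the sub-case $(k,k+1)\in I_{0,1}(\sigma)$ by the $\mathbf{SL}_2$ Gauss identity $s_kx_{k,k+1}(r)=x_{k,k+1}(r^{-1})h_k(r)x_{k+1,k}(r^{-1})$ together with $x_{k+1,k}(r^{-1})\sigma=\sigma$, and then handles $(k,k+1)\in\inv(\sigma)$ separately by invoking Yokonuma's lemma $s_kUs_k\subset U\cup\bigcup_rUh_k(r)s_kU$ and the already-established cases. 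You instead write $s_kx_{k,k+1}(r)=x_{k+1,k}(r)s_k$ and run the directed Gaussian elimination from the proof of Proposition~\ref{033} on $x_{k+1,k}(r)s_k\sigma$, identifying the normal form via the uniqueness statement there; this handles both sub-cases uniformly and avoids the external citation of \cite{Yo67}. The computation you defer does check out: in the $I_{0,1}$ sub-case one row operation $x_{k,k+1}(-r^{-1})$ clears the doubled column and leaves $(1+(r-1)E_{k+1,k+1})\sigma=h_k(r)\sigma$; in the $\inv(\sigma)$ sub-case, with $j_1=k\sigma>j_2=(k+1)\sigma$, the same row operation followed by adding $-\sigma_{k,j_1}/(r\sigma_{k+1,j_2})$ times column $j_2$ to column $j_1$ (legal since $j_2<j_1$) leaves row $k$ scaled by $-r^{-1}$ and row $k+1$ scaled by $r$, i.e.\ exactly $h_k(r)\sigma$, and since these operations are left/right multiplications by elements of $U$ the double coset is $Uh_k(r)\sigma U$. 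The trade-off is that your route is more elementary and self-contained but requires trusting the explicit normal-form bookkeeping, whereas the paper's route leans on structural facts (the Gauss decomposition and Yokonuma's lemma) that generalize beyond the matrix model. Your observation that $h_k(1)\sigma=\sigma$ when row $k$ of $\sigma$ vanishes, so that $U\sigma U$ reappears inside the union in the $I_{0,1}$ case, is a correct consistency check not made explicit in the paper.
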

\begin{proof}
The argument follows the same pattern as in \cite[Proposition 3.12]{So90}. First, note that $s_ki<s_k\,j$ for all $(i,j)\neq(k,k+1)$. From \eqref{036} it follows that $s_kU_{s_k}s_k=U_{s_k}$, hence:\begin{equation}\label{038}Us_kU\cdot U\sigma U=Us_kU\sigma U=Us_k(U_{s_k}\bar{U}_{s_k})\sigma U=U(s_kU_{s_k}s_k)s_k\bar{U}_{s_k}\sigma U=UU_{s_k}s_k\bar{U}_{s_k}\sigma U=Us_k\bar{U}_{s_k}\sigma U.\end{equation}Here, $\bar{U}_{s_k}=\{x_{k,k+1}(r)\mid r\in\Fbb_q^\times\}$. If $(k,k+1)\in I_{0,0}(\sigma)\sqcup I_{1,0}(\sigma)$, we have $k+1\not\in I(\sigma)$. From \eqref{038} and \eqref{036} we deduce $Us_kU\cdot U\sigma U=Us_k\sigma U$. In particular, $Us_kU\cdot U\sigma U=U\sigma U$ when $(k,k+1)\in I_{0,0}(\sigma)$.

Now, if $(k,k+1)\in I_{1,1}\setminus\inv(\sigma)$, then $k,k+1\in I(\sigma)$ and $k\sigma<(k+1)\sigma$. Using \eqref{038} and \eqref{036} we get $Us_kU\cdot U\sigma U=Us_k\sigma\bar{U}_{s_k} U=Us_k\sigma U$.

If $(k,k+1)\in I_{0,1}(\sigma)$, we have $s_kx_{k,k+1}(0)\sigma=s_k\sigma\in Us_k\sigma U$. However, due to \eqref{036}, for $r\in\Fbb_q^\times$ we get $s_kx_{k,k+1}(r)\sigma=x_{k,k+1}(r^{-1})h_k(r)x_{k+1,k}(r^{-1})\sigma=x_{k,k+1}(r^{-1})h_k(r)\sigma\in Uh_k(r)\sigma U$. From \eqref{038} we conclude\[Us_kU\cdot U\sigma U=Us_k\sigma U\,\,\,\cup\,\bigcup_{r\in\Fbb_q^\times}Uh_k(r)\sigma U.\]Finally, if $(k,k+1)\in\inv(\sigma)$, then $(k,k+1)\in I_{1,1}(s_k\sigma)\setminus\inv(s_k\sigma)$. By applying the previous case, we obtain $Us_kU\cdot U(s_k\sigma)U=Us_k^2\sigma U=U\sigma U$. Moreover, \cite[Lemma 3]{Yo67} gives:\[s_kUs_k\subset U\cup\bigcup_{r\in\Fbb_q^\times}Uh_k(r)s_kU.\]Therefore,\[Us_kU\cdot U\sigma U=U(s_kUs_k)s_k\sigma U=Us_k\sigma U\,\cup\bigcup_{r\in\Fbb_q^\times}Uh_k(r)s_kUs_k\sigma U=Us_k\sigma U\,\cup\bigcup_{r\in\Fbb_q^\times}Uh_k(r)\sigma U.\]This completes the proof.
\end{proof}

\begin{crl}\label{039}
For each $k\in[n-1]$ and $\sigma\in\QQ$, we have:\[Us_kU\cdot U\sigma U=\begin{cases}
U\sigma U&\text{if }\ell(s_k\sigma)=\ell(\sigma)\\
Us_k\sigma U&\text{if }\ell(s_k\sigma)=\ell(\sigma)+1\\
{\displaystyle Us_k\sigma U\,\,\,\cup\,\bigcup_{r\in\Fbb_q^\times}Uh_k(r)\sigma U}&\text{if }\ell(s_k\sigma)=\ell(\sigma)-1
\end{cases}\]
\end{crl}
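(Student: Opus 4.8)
The plan is to establish a dictionary between the three length conditions in the statement and the three diagrammatic cases of Proposition~\ref{037}; granting this, the Corollary is immediate. The first step is a reduction to the rook monoid: by Lemma~\ref{050} one has $\widetilde{s_k\sigma}=s_k\tilde{\sigma}$, so that $\ell(s_k\sigma)=\ell(s_k\tilde{\sigma})$ by the definition of the length function on $\QQ$, while the sets $I(\sigma)$ and $\inv(\sigma)$---and therefore all the sets $I_{a,b}(\sigma)$---depend only on $\tilde{\sigma}$. Hence it suffices to treat the case $\sigma=\tilde{\sigma}\in\R$, where left multiplication by $s_k$ interchanges rows $k$ and $k+1$ of $\sigma$ and fixes every column. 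In particular $J(s_k\sigma)=J(\sigma)$ and $\sigma$, $s_k\sigma$ have the same rank $r$, so $m_R(s_k\sigma)=m_R(\sigma)$ by Proposition~\ref{031}.

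The core of the proof is a short case analysis according to the membership of $k$ and $k+1$ in $I(\sigma)$, keeping track of $n$, $m_L$, $m_R$ and using $\ell=n+m_L+m_R$ (Proposition~\ref{031}). If $(k,k+1)\in I_{0,0}(\sigma)$, then rows $k$ and $k+1$ of $\sigma$ vanish, so $s_k\sigma=\sigma$ and $\ell(s_k\sigma)=\ell(\sigma)$. If exactly one of $k,k+1$ lies in $I(\sigma)$, the relative order of the unique nonzero row among $k$ and $k+1$ with respect to every other nonzero row is unaffected, so $n(s_k\sigma)=n(\sigma)$, while the formula $m_L(\sigma)=\sum_{i\in I(\sigma)}(i-1)-\frac{r(r-1)}{2}$ shows $m_L$ increases by $1$ when $(k,k+1)\in I_{1,0}(\sigma)$ and decreases by $1$ when $(k,k+1)\in I_{0,1}(\sigma)$; hence $\ell(s_k\sigma)=\ell(\sigma)+1$ and $\ell(s_k\sigma)=\ell(\sigma)-1$ respectively. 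Finally, if $k,k+1\in I(\sigma)$, then $I(s_k\sigma)=I(\sigma)$, so $m_L$ is unchanged, whereas interchanging the rows changes the inversion count solely by flipping the status of the pair $(k,k+1)$ (the total number of the remaining inversions being preserved); hence $n(s_k\sigma)=n(\sigma)+1$ when $(k,k+1)\in I_{1,1}(\sigma)\setminus\inv(\sigma)$ and $n(s_k\sigma)=n(\sigma)-1$ when $(k,k+1)\in\inv(\sigma)$, giving $\ell(s_k\sigma)=\ell(\sigma)\pm1$ accordingly. The two assertions about inversion counts are precisely the content of \cite[Lemma 2.25 and Corollary 2.44]{So90}.

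Since the cases $(k,k+1)\in I_{0,0}(\sigma)$, $(k,k+1)\in I_{1,0}(\sigma)\sqcup(I_{1,1}(\sigma)\setminus\inv(\sigma))$ and $(k,k+1)\in I_{0,1}(\sigma)\sqcup\inv(\sigma)$ are pairwise disjoint and exhaust all possibilities, the computation above shows that they are characterized by $\ell(s_k\sigma)=\ell(\sigma)$, $\ell(s_k\sigma)=\ell(\sigma)+1$ and $\ell(s_k\sigma)=\ell(\sigma)-1$, respectively. Substituting these equivalences into Proposition~\ref{037} yields the stated formula. I expect the only delicate point to be the bookkeeping in the case analysis---verifying that interchanging rows $k$ and $k+1$ modifies the inversion and left-inversion counts only through the pair $\{k,k+1\}$---but this is routine and is already packaged in the cited lemmas of Solomon.
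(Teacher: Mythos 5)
Your proof is correct and follows the route the paper intends: the corollary is stated without proof as an immediate consequence of Proposition~\ref{037}, and your case analysis (reducing to $\tilde{\sigma}\in\R$ via Lemma~\ref{050} and tracking $n$, $m_L$, $m_R$ through Proposition~\ref{031}) is exactly the dictionary between the diagrammatic cases and the length conditions that the paper leaves implicit. No gaps.
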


\begin{rem}\label{043}
Since $tU=Ut$ for all $t\in T$, we have $UtU\cdot U\sigma U=Ut\sigma U$ and $U\sigma U\cdot Ut U=U\sigma tU$ for all $t\in T$ and $\sigma\in\QQ$. In particular, for any $t,t',t_I\in T$ and $\sigma\in\QQ$, we get\[\begin{array}{c}
(UtU)^m=Ut^mU;\qquad UtU\cdot Ut'U=Ut'U\cdot UtU;\qquad Ut_IU\cdot U\sigma U=Ut_I\sigma U=U\sigma t_JU=U\sigma U\cdot Ut_JU.
\end{array}\]Moreover, $(UtU)^m=U$ whenever $m$ is a power of $q$.
\end{rem}

\section{The framed rook algebra}\label{154}

This section contains the main results of the paper. We first define the framed rook algebra $H(M,U)$ as the algebra generated by the characteristic functions of the double cosets of $U$ in $M$. Parallel to this, we introduce the Rook Yokonuma--Hecke algebra, denoted by $\RY_{d,n}(u)$, as an abstract structure defined by generators and relations. Our central result establishes that the algebra $H(M,U)$ is isomorphic to the abstract algebra $\RY_{d,n}(u)$ under a specific parameter specialization. To complete the characterization, we construct a faithful representation of $\RY_{d,n}(u)$ on a tensor space, which allows us to determine a standard basis and prove that the dimension of the algebra coincides with the cardinality of the generalized rook monoid.

For each $\sigma\in\QQ^r$, we set $T_\sigma=q^{\frac{r(1-r)}{2}}[\sigma]$, where ${\displaystyle[\sigma]=\sum_{x\in U\sigma U}x}$.

Consider the homomorphism $\pi:\Fbb_q[M]\to\Fbb_q$ defined by $\pi(\sigma)=1$ for all $\sigma\in M$. By Lemma~\ref{040}, we have\begin{gather}\label{041}\pi(T_\sigma)=q^{\frac{r(1-r)}{2}}|U\sigma U|=q^{\ell(\sigma)}.\end{gather}

The \emph{framed rook algebra} is the $\Z$-module\begin{equation}H(M,U)=\bigoplus_{\sigma\in\QQ}\Z T_\sigma.\label{125}\end{equation}

For each $k\in[n-1]$ and $r\in\Fbb_q^\times$, we define $h_k^s(r)=s_kh_k(r)s_k\in T$, so that $h_k^s(r)s_k=s_kh_k(r)$, as in \eqref{044}.

The following theorem is analogous to \cite[Theorem 4.12]{So90} and \cite[Th\'eor\`eme 2]{Yo67}.
\begin{thm}\label{055}
The module $H(M,U)$ is a unital ring with unit $T_1$, generated by $T_{s_k}$ with $k\in[n-1]$, $T_t$ with $t\in T$, and $T_\nu$. Moreover,
\begin{enumerate}
\item For each $t,t',t_I\in T$ and $\sigma\in\QQ$, we have
\[T_t^{\,q-1}=1;\qquad T_tT_{t'}=T_{t'}T_t\,;\qquad T_{t_I}T_\sigma=T_{t_I\sigma}=T_{\sigma t_J}=T_\sigma T_{t_J}.\]
where $t_J$ is defined as in \eqref{044}.\label{046}
\item For each $k\in[n-1]$ and each $\sigma\in\QQ$, we have
\begin{gather*}
T_{s_k}T_\sigma=\begin{cases}
qT_\sigma&\text{if }\ell(s_k\sigma)=\ell(\sigma)\\
T_{s_k\sigma}&\text{if }\ell(s_k\sigma)=\ell(\sigma)+1\\
{\displaystyle qT_{s_k\sigma}+\sum_{r\in\Fbb_q^\times}T_{h_k(r)}T_\sigma}&\text{if }\ell(s_k\sigma)=\ell(\sigma)-1
\end{cases}
\\
T_\sigma T_{s_k}=\begin{cases}
qT_\sigma&\text{if }\ell(\sigma s_k)=\ell(\sigma)\\
T_{\sigma s_k}&\text{if }\ell(\sigma s_k)=\ell(\sigma)+1\\
{\displaystyle qT_{\sigma s_k}+\sum_{r\in\Fbb_q^\times}T_\sigma T_{h_k^s(r)}}&\text{if }\ell(\sigma s_k)=\ell(\sigma)-1
\end{cases}
\end{gather*}\label{047}
\item For each $\sigma\in\QQ$, we have\[T_\nu T_\sigma=q^{\ell(\sigma)-\ell(\nu\sigma)}T_{\nu\sigma};\qquad T_\sigma T_\nu=q^{\ell(\sigma)-\ell(\sigma\nu)}T_{\sigma\nu}.\]\label{048}
\end{enumerate}
\end{thm}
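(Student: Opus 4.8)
The plan is to realise $H(M,U)$ inside the monoid algebra $\mathbb{Q}[M]$: by Proposition~\ref{033} the elements $[\sigma]$, $\sigma\in\QQ$, form a basis of the space $(\mathbb{Q}[M])^{U\times U}$ of $U$-bi-invariant elements, and since a product of $U$-bi-invariant elements is again $U$-bi-invariant, $(\mathbb{Q}[M])^{U\times U}=H(M,U)\otimes_\Z\mathbb{Q}$ is an associative $\mathbb{Q}$-subalgebra of $\mathbb{Q}[M]$ (a priori without unit; $T_1$ will turn out to be its identity). The augmentation $\pi\colon\mathbb{Q}[M]\to\mathbb{Q}$ is an algebra homomorphism with $\pi(T_\sigma)=q^{\ell(\sigma)}$ by \eqref{041}. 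The engine of the whole proof is the following: if the coset product $U\tau U\cdot U\sigma U$ equals a single double coset $U\rho U$, then $[\tau][\sigma]$ is $U$-bi-invariant with support in $U\rho U$, hence $[\tau][\sigma]=c\,[\rho]$ for a scalar $c$, and applying $\pi$ forces $c=|U\tau U|\,|U\sigma U|/|U\rho U|$, an explicit power of $q$ by Lemma~\ref{040}; rewriting through $T_\sigma=q^{-\binom r2}[\sigma]$ (for $\sigma\in\QQ^r$) then yields the asserted identity. Applied to $\tau=1$, where $[1][\sigma]$ is $U$-bi-invariant with support in $U\sigma U$, this gives $[1][\sigma]=\pi([1])[\sigma]=q^{\binom n2}[\sigma]$, hence $T_1T_\sigma=T_\sigma=T_\sigma T_1$, so $T_1$ is the two-sided identity.

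Part (1) then follows from the engine together with Remark~\ref{043}: the products $UtU\cdot Ut'U=Utt'U$ and $Ut_IU\cdot U\sigma U=Ut_I\sigma U=U\sigma t_JU$ are single double cosets, and because $\ell$ vanishes on $T$ and $\ell(t_I\sigma)=\ell(\sigma)$ all $q$-factors collapse to $q^{\binom n2}$, giving $T_tT_{t'}=T_{tt'}$ and $T_{t_I}T_\sigma=T_{t_I\sigma}=T_{\sigma t_J}=T_\sigma T_{t_J}$; thus $t\mapsto T_t$ is a homomorphism from the group $T$ into $H(M,U)^\times$, so $T_t^{\,q-1}=T_{t^{q-1}}=T_1$ and $T_tT_{t'}=T_{t't}=T_{t'}T_t$. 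For part (3) the plan is to first establish the linear-algebraic identity $U\nu=\nu U$: given $u\in U$, the matrix $u\nu$ vanishes on and below the main diagonal and has $1$'s on the superdiagonal, and shifting its entries one place down and to the right exhibits a $u'\in U$ with $u\nu=\nu u'$; the reverse inclusion is symmetric. Consequently $U\nu U\cdot U\sigma U=\nu U\sigma U=U\nu\sigma U$ and $U\sigma U\cdot U\nu U=U\sigma\nu U$ are single double cosets, and the engine gives $T_\nu T_\sigma=q^{\ell(\sigma)-\ell(\nu\sigma)}T_{\nu\sigma}$ and $T_\sigma T_\nu=q^{\ell(\sigma)-\ell(\sigma\nu)}T_{\sigma\nu}$; the exponents are non-negative by Lemma~\ref{051} and its right-hand analogue, so these products lie in $\bigoplus_\rho\Z T_\rho$.

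For part (2), Corollary~\ref{039} shows that $Us_kU\cdot U\sigma U$ is a single double coset when $\ell(s_k\sigma)=\ell(\sigma)$ (then $s_k\sigma=\sigma$ by Lemma~\ref{051}, the coset is $U\sigma U$, and the engine gives $T_{s_k}T_\sigma=qT_\sigma$) and when $\ell(s_k\sigma)=\ell(\sigma)+1$ (the coset $Us_k\sigma U$, giving $T_{s_k}T_\sigma=T_{s_k\sigma}$). The remaining case $\ell(s_k\sigma)=\ell(\sigma)-1$ is the only one in which $Us_kU\cdot U\sigma U$ splits into $q$ double cosets, so $\pi$ does not determine the multiplicities; here the plan is to first prove the quadratic relation $T_{s_k}^2=qT_1+\sum_{r\in\Fbb_q^\times}T_{h_k(r)}T_{s_k}$ — the multiplicity of $1$ in $[s_k]^2$ is $|\{x\in Us_kU:x^{-1}\in Us_kU\}|=|Us_kU|=q^{\binom n2+1}$, and that of each $h_k(r)s_k$ is $q^{\binom n2}$, obtained from the factorization $Us_kU=\bigsqcup_{a\in\Fbb_q}Us_kx_{k,k+1}(a)$, the rank-one relation $s_kx_{k,k+1}(r)=x_{k,k+1}(r^{-1})h_k(r)x_{k+1,k}(r^{-1})$ of the proof of Proposition~\ref{037}, and a count of the $u\in U$ with $s_kus_k\in Uh_k(r)s_kU$, exactly as in \cite[Lemme~3, Th\'eor\`eme~2]{Yo67}. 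Once this is known, the general subcase follows by bootstrapping: since $\ell(s_k\cdot s_k\sigma)=\ell(\sigma)=\ell(s_k\sigma)+1$, case~2 gives $T_{s_k}T_{s_k\sigma}=T_\sigma$, whence, by associativity and part~(1), $T_{s_k}T_\sigma=T_{s_k}^2T_{s_k\sigma}=(qT_1+\sum_rT_{h_k(r)}T_{s_k})T_{s_k\sigma}=qT_{s_k\sigma}+\sum_rT_{h_k(r)}T_\sigma$. The right-multiplication formulas come from the symmetric argument, using the relations for $\sigma x_{i,j}(r)$ in \eqref{036} (or the anti-automorphism $x\mapsto w_0x_*w_0$ of $M$, $w_0\in W$ the longest element, which fixes $U$), with $h_k^s(r)=s_kh_k(r)s_k$ in place of $h_k(r)$. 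Finally, formulas (1)--(3) show that products of $T_{s_k}$, $T_t$, $T_\nu$ expand as $\Z$-linear combinations of the $T_\rho$, while the reduced factorizations of \cite{So90} adapted to \eqref{071} express every $T_\rho$ conversely as such a product; hence $\bigoplus_\rho\Z T_\rho$ is a subring of $(\mathbb{Q}[M])^{U\times U}$, equal to the subring generated by $T_{s_k}$, $T_t$, $T_\nu$, with unit $T_1$.

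The step I expect to be the main obstacle is the third subcase of part (2): pinning down the multiplicities $q^{\binom n2+1}$ and $q^{\binom n2}$ of the $q$ double cosets in $Us_kU\cdot U\sigma U$. Every other identity reduces through $\pi$ to $q$-power bookkeeping on top of the single-double-coset results of Section~\ref{153}; this one instead requires a genuinely quantitative version of Yokonuma's lemma $s_kUs_k\subseteq U\cup\bigcup_{r\in\Fbb_q^\times}Uh_k(r)s_kU$, for which the augmentation supplies only one linear relation among the $q$ unknown multiplicities and hence offers no shortcut.
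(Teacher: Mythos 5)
Your proposal is correct and follows essentially the same route as the paper: products of double-coset sums are pinned down by the augmentation $\pi$ whenever $U\tau U\cdot U\sigma U$ is a single double coset, and the case $\ell(s_k\sigma)=\ell(\sigma)-1$ is bootstrapped through the quadratic relation $T_{s_k}^2=qT_1+\sum_{r\in\Fbb_q^\times}T_{h_k(r)s_k}$ exactly as in the paper's proof. Your explicit multiplicity count for that quadratic relation and your verification that $U\nu=\nu U$ merely supply details the paper leaves implicit (deferring to Yokonuma), so this is the same argument rather than a different one.
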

\begin{proof}
We first stablish \eqref{046}--\eqref{048}. Since $T_1=U$, we have $T_\sigma T_1=T_\sigma=T_1T_\sigma$ for all $\sigma\in\QQ$.

\eqref{046} By Remark~\ref{043} and the fact that $\ell(t)=0$ for all $t\in T$, we obtain\[\begin{array}{c}
T_t^{\,q-1}=[t]^q=[t^q]=U=T_1=1;\qquad T_tT_{t'}=[t][t']=[tt']=[t't]=[t'][t]=T_{t'}T_t\,;\\
T_{t_I}T_\sigma=q^{\frac{r(1-r)}{2}}[t_I][\sigma]=q^{\frac{r(1-r)}{2}}[t_I\sigma]=T_{t_I\sigma}=T_{\sigma t_J}=q^{\frac{r(1-r)}{2}}[\sigma t_J]=q^{\frac{r(1-r)}{2}}[\sigma][t_J]=T_\sigma T_{t_J}.
\end{array}\]

\eqref{047} If $U\sigma U\cdot U\tau U=U\sigma\tau U$ for some $(\sigma,\tau)\in\QQ^r\times\QQ^s$, then
\begin{gather}\label{045}T_\sigma T_\tau=q^{\frac{r(1-r)}{2}+\frac{s(1-s)}{2}}[\sigma\tau]=q^{\frac{r(1-r)}{2}+\frac{s(1-s)}{2}-\frac{t(1-t)}{2}}T_{\sigma\tau}\quad\text{where}\quad\sigma\tau\in\QQ^t.\end{gather}Applying $\pi$ to both sides and using\eqref{041} yields $q^{\frac{r(1-r)}{2}+\frac{s(1-s)}{2}-\frac{t(1-t)}{2}}=q^{\ell(\sigma)+\ell(\tau)-\ell(\sigma\tau)}$.

From this and Corollary~\ref{031}, we deduce\begin{gather}\label{042}T_{s_k}T_\sigma=\begin{cases}
qT_\sigma&\text{if }\ell(s_k\sigma)=\ell(\sigma)\\
T_{s_k\sigma}&\text{if }\ell(s_k\sigma)=\ell(\sigma)+1
\end{cases}\end{gather}

If $\ell(s_k\sigma)=\ell(\sigma)-1$, set $\tau=s_k\sigma$, so $\sigma=s_k\tau$ with $\ell(s_k\tau)=\ell(\sigma)=\ell(\tau)+1$. Then, using \eqref{042}, Corollary~\ref{031} and part~\eqref{046}, we get\[T_{s_k}T_\sigma=T_{s_k}^2T_\tau=\left(qT_1+\sum_{r\in\Fbb_q^\times}T_{h_k(r)s_k}\right)T_\tau=qT_\tau+\sum_{r\in\Fbb_q^\times}T_{h_k(r)}T_{s_k}T_\tau=qT_{s_k\sigma}+\sum_{r\in\Fbb_q^\times}T_{h_k(r)}T_\sigma.\]The formula for $T_\sigma T_{s_k}$ follows analogously, using part~\eqref{046}.

\eqref{048} Since $U\nu U\cdot U\sigma U=U\nu\sigma U$, because $U\nu=\nu U$, applying $\pi$ in \eqref{045} gives\[T_\nu T_\sigma=q^{\ell(\nu)+\ell(\sigma)-\ell(\nu\sigma)}T_{\nu\sigma}=q^{\ell(\sigma)-\ell(\nu\sigma)}T_{\nu\sigma},\]since $\ell(\nu)=0$. The second identity is shown similarly.

We now prove that $H(M,U)$ is generated by $T_{s_i}$, $T_t$, and $T_\nu$. First, observe that by applying induction on part \eqref{047}, we obtain $T_w=T_{s_{i_1}}\cdots T_{s_{i_k}}$ for all $w\in W$, where $s_{i_1}\cdots s_{i_k}$ is a reduced expression of $w$.

On the other hand, by applying induction on part \eqref{048}, for each integer $i\geq0$, we obtain\begin{equation}\label{056}T_\nu^iT_\sigma=q^{\ell(\sigma)-\ell(\nu^i\sigma)}T_{\nu^i\sigma}.\end{equation}In particular, taking $\sigma=1$ shows $T_\nu^i=T_{\nu^i}$.

By \eqref{044} and the transitivity of the $W\times W$ action on $\R^r$ \cite[Equation (2.2)]{So90}, each $\sigma\in\QQ^r$ can be written as $\sigma=tu\nu^i w$, where $t\in T$, $u,w\in W$, $i=n-r$ and $\ell(\sigma)=\ell(u)+\ell(w)$. We use induction on $\ell(\sigma)$ to show that $T_\sigma=T_tT_uT_\nu^iT_w$.

Observe that $\tilde{\sigma}=u\nu^i w\in\R$, and that, by applying \eqref{046}, we have $T_\sigma=T_tT_{u\nu^iw}$.

If $\ell(\sigma)=0$ then $\tilde{\sigma}=\nu^i$. As shown above, we have $T_{\tilde{\sigma}}=T_\nu^i$. If $\ell(\sigma)>0$ we assume the assertion is true for elements of smaller length. Since $\ell(\sigma)=\ell(u)+\ell(w)$, then $\ell(u)>0$ or $\ell(w)>0$. Suppose that $\ell(u)>0$, and consider $k\in[n-1]$ such that $\ell(s_ku)<\ell(u)$. Then $s_k\tilde{\sigma}=(s_ku)\nu^iw$ with $\ell(s_k\tilde{\sigma})=\ell(\tilde{\sigma})-1$. By applying part \eqref{047} and the induction hypothesis, we get $T_{\tilde{\sigma}}=T_{s_k}T_{s_k\tilde{\sigma}}=T_{s_k}T_{s_ku}T_\nu^iT_w=T_uT_\nu^iT_w$. The case $\ell(w)>0$ is analogous. Therefore $T_\sigma=T_tT_{\tilde{\sigma}}=T_tT_uT_\nu^iT_w$.

Finally, by \eqref{046} and \eqref{047}, we get $T_t\cdot H(M,U)\subseteq H(M,U)$ and $T_{s_k}\cdot H(M,U)\subseteq H(M,U)$. Lemma~\ref{051}\eqref{054} ensures that the exponent $q^{\ell(\sigma)-\ell(\nu\sigma)}$ in \eqref{048} is an integer. Thus, $T_\nu\cdot H(M,U)\subseteq H(M,U)$ as well. Similarly, we have $H(M,U)\cdot T_t\subseteq H(M,U)$, $H(M,U)\cdot T_{s_k}\subseteq H(M,U)$ and $H(M,U)\cdot T_\nu\subseteq H(M,U)$. This shows that $H(M,U)$ is a unital ring with unit $T_1=U$, generated as stated.
\end{proof}

\begin{rem}\label{086}
Notice that, in the cases where $\ell(\sigma s_k)\geq\ell(\sigma)$ and $\ell(s_k\sigma)\geq\ell(\sigma)$, the multiplication rules in Theorem~\ref{055}\eqref{047} agree exactly with those in \cite[Theorem 4.12]{So90}.
\end{rem}

\begin{crl}\label{126}
$H(M,U)$ is a $\Z$--algebra whose dimension is $|\QQ|$.
\end{crl}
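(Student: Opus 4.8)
The plan is to obtain this as an essentially immediate consequence of Theorem~\ref{055}. The multiplication on $H(M,U)$ is the restriction of the convolution product on $\Fbb_q[M]$ (equivalently, the bilinear extension of the set-product of double cosets used throughout Section~\ref{160}), which is $\Z$-bilinear; hence the unital ring structure on $H(M,U)$ established in Theorem~\ref{055} is automatically compatible with the $\Z$-module structure from \eqref{125}, so $H(M,U)$ is a $\Z$-algebra. The only substantive point left is therefore the computation of its rank.

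Next I would check that $\{T_\sigma\mid\sigma\in\QQ\}$ is a $\Z$-basis of $H(M,U)$. Spanning is immediate from the definition \eqref{125}. For linear independence, I would invoke Proposition~\ref{033}: the double cosets $U\sigma U$ with $\sigma\in\QQ$ are pairwise disjoint and partition $M$, so the elements $[\sigma]=\sum_{x\in U\sigma U}x$ have pairwise disjoint supports in $M$ and are thus linearly independent over $\Z$ (indeed over $\Q$). Since each $T_\sigma=q^{\frac{r(1-r)}{2}}[\sigma]$ is a nonzero scalar multiple of $[\sigma]$, the family $\{T_\sigma\mid\sigma\in\QQ\}$ is linearly independent, hence a basis.

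Finally, $\QQ$ is a finite set, being a submonoid of the finite monoid $M=\Mbf_n(\Fbb_q)$, so $H(M,U)$ is a free $\Z$-module of rank $|\QQ|$; its dimension (as the rank of this free module, equivalently the dimension after extension of scalars to a field) equals $|\QQ|$. I do not anticipate a genuine obstacle here: all the real work is carried by Theorem~\ref{055} and Proposition~\ref{033}, and the only step needing a word of justification is the disjoint-support argument for linear independence; the rest is formal.
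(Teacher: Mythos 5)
Your proof is correct and follows essentially the same route as the paper, which simply cites Theorem~\ref{055} for the ring structure and the direct-sum definition \eqref{125} for the rank; your additional disjoint-support argument via Proposition~\ref{033} just makes explicit why the $T_\sigma$ are independent, which the paper leaves implicit in the definition.
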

\begin{proof}
It is a direct consequence of Theorem~\ref{055} and the definition of the framed rook algebra in \eqref{125}.
\end{proof}

Observe that $T_0$ is not a zero element of the ring $H(M,U)$. Indeed, by applying \eqref{056} and using the fact that $\nu^n=0$, we obtain\[T_0T_\sigma=T_\nu^nT_\sigma=q^{\ell(\sigma)-\ell(\nu^n\sigma)}T_{\nu^n\sigma}=q^{\ell(\sigma)}T_0.\]

In what follows, we write $T_i$, $F_i$, and $N$ to denote $T_{s_i}$, $T_{a_i}$, and $T_\nu$, respectively. We also set\[E_i=\frac{1}{q-1}\sum_{k=0}^{q-2}F_i^kF_{i+1}^{-k}.\]

\begin{pro}\label{078}
The algebra $H(M,U)$ is generated by $T_1,\ldots,T_{n-1}$, $F_1,\ldots,F_n$ and $N$. Moreover, the following relations hold.\begin{gather}
T_iT_jT_i=T_jT_iT_j,\quad |i-j|=1;\qquad T_iT_j=T_jT_i,\quad |i-j|>1;\label{072}\\
F_i^{q-1}=1;\qquad F_iF_j=F_jF_i;\label{073}\\[-0.19cm]
T_i^2=q+(q-1)F_i^{\frac{q-1}{2}}E_iT_i;\qquad T_iF_j=F_{js_i}T_i;\label{074}\\
F_iN=NF_{i+1},\quad i<n;\qquad NF_1=F_nN=N;\label{075}\\
N^{i+1}T_i=qN^{i+1};\qquad T_iN^{n-i+1}=qN^{n-i+1};\label{076}\\
T_iN=NT_{i+1};\qquad NT_1\cdots T_{n-1}N=q^{n-1}N.\label{077}
\end{gather}
\end{pro}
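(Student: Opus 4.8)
The plan is to obtain the whole statement as a consequence of Theorem~\ref{055} together with the structure of the generalized rook monoid $\QQ$, so that only one genuinely computational step remains.

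\emph{Generation and the relations not involving $N$.} Theorem~\ref{055} already gives that $H(M,U)$ is generated by the $T_{s_k}$ ($k\in[n-1]$), the $T_t$ with $t\in T$, and $T_\nu$. Since $T\simeq(\Fbb_q^\times)^n$ is generated by $a_1,\dots,a_n$, every $t\in T$ is a product $a_1^{m_1}\cdots a_n^{m_n}$, and the multiplicativity of $t\mapsto T_t$ on $T$ (Theorem~\ref{055}\eqref{046}, cf.\ Remark~\ref{043}) gives $T_t=F_1^{m_1}\cdots F_n^{m_n}$; hence $T_1,\dots,T_{n-1},F_1,\dots,F_n,N$ generate $H(M,U)$. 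Relation \eqref{073} is then Theorem~\ref{055}\eqref{046} evaluated at $t=a_i$. For the braid and far-commutation relations \eqref{072} I would use the fact, established in the proof of Theorem~\ref{055}, that $T_w=T_{s_{i_1}}\cdots T_{s_{i_k}}$ for every reduced expression $s_{i_1}\cdots s_{i_k}$, applied to the two reduced words of $s_is_js_i=s_js_is_j$ (for $|i-j|=1$) and of $s_is_j=s_js_i$ (for $|i-j|>1$). For the relation $T_iF_j=F_{js_i}T_i$ in \eqref{074}: in $\QQ\simeq\F_q(\I_n)$ we have $s_ia_j=a_{js_i}s_i$ (relation \eqref{023}), and $\widetilde{s_ia_j}=s_i$, so $\ell(s_ia_j)=\ell(a_j)+1$; hence the second case of Theorem~\ref{055}\eqref{047} gives $T_iF_j=T_{s_ia_j}=T_{a_{js_i}s_i}$, and splitting off the torus factor with \eqref{046} yields $F_{js_i}T_i$.

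\emph{Relations involving $N$.} I would use throughout that $N^{j}=T_{\nu^{j}}$ (by \eqref{056}) and that $\ell(\nu^{j})=0$ (these powers are the base points $v_{n-j}$ of Subsection~\ref{158}). The matrix identities $a_i\nu=\nu a_{i+1}$ and $\nu a_1=a_n\nu=\nu$ hold in $\QQ$ (relations \eqref{085}, or directly from \eqref{035}); since $\nu\in\R$, Lemma~\ref{050} shows each of these products has the same support matrix as $\nu$ and hence length $0$, so Theorem~\ref{055}\eqref{048} gives $F_iN=T_{\nu a_{i+1}}=NF_{i+1}$ and $NF_1=F_nN=N$, which is \eqref{075}. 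The identities $\nu^{i+1}s_i=\nu^{i+1}$ and $s_{n-i}\nu^{i+1}=\nu^{i+1}$ in $\I_n$ (relations \eqref{066}, transported to $\QQ$) are length preserving, so the first case of Theorem~\ref{055}\eqref{047} contributes the factor $q$, giving $N^{i+1}T_i=qN^{i+1}$ and, after relabelling, $T_iN^{n-i+1}=qN^{n-i+1}$, which is \eqref{076}. For $T_iN=NT_{i+1}$ I would note that $s_i\nu=\nu s_{i+1}$ in $\QQ$ and $\ell(s_i)=\ell(s_{i+1})$, so the two relevant instances of Theorem~\ref{055}\eqref{048} produce the same power of $q$ and the same $T$-symbol, whence $T_iN=q^{\ell(s_i)-\ell(s_i\nu)}T_{s_i\nu}=q^{\ell(s_{i+1})-\ell(\nu s_{i+1})}T_{\nu s_{i+1}}=NT_{i+1}$. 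Finally, writing $T_1\cdots T_{n-1}=T_w$ with $w=s_1\cdots s_{n-1}$ reduced of length $n-1$, and using $\nu w\nu=\nu$ (relation \eqref{066}, so $\ell(\nu w\nu)=0$), two applications of Theorem~\ref{055}\eqref{048} give
\[
NT_1\cdots T_{n-1}N=q^{\ell(w)-\ell(\nu w)}T_{\nu w}T_\nu=q^{\ell(w)-\ell(\nu w)}\,q^{\ell(\nu w)-\ell(\nu w\nu)}T_{\nu w\nu}=q^{\ell(w)}N=q^{n-1}N,
\]
the intermediate exponent $\ell(\nu w)$ cancelling; this is \eqref{077}.

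\emph{Main obstacle.} The remaining relation $T_i^2=q+(q-1)F_i^{\frac{q-1}{2}}E_iT_i$ is where the actual computation lies, and I expect it to be the main difficulty. Applying the third case of the first formula in Theorem~\ref{055}\eqref{047} to the product $T_{s_i}\cdot T_{s_i}$ (valid since $\ell(s_i^2)=\ell(1)=0=\ell(s_i)-1$) gives $T_i^2=qT_1+\sum_{r\in\Fbb_q^\times}T_{h_i(r)}T_{s_i}=q+\bigl(\sum_{r\in\Fbb_q^\times}T_{h_i(r)}\bigr)T_i$. It then remains to identify $\sum_{r\in\Fbb_q^\times}T_{h_i(r)}$ with $(q-1)F_i^{\frac{q-1}{2}}E_i$: parametrising $\Fbb_q^\times=\{a^j:0\le j\le q-2\}$ and using \eqref{080} in the form $h_i(a^j)=a_i^{\frac{q-1}{2}}a_i^{-j}a_{i+1}^j$, the sum becomes $F_i^{\frac{q-1}{2}}\sum_{j=0}^{q-2}F_i^{-j}F_{i+1}^j$, and the cyclic reindexing $j\mapsto q-1-j$ (legitimate because $F_i^{q-1}=F_{i+1}^{q-1}=1$) turns $\sum_{j=0}^{q-2}F_i^{-j}F_{i+1}^j$ into $\sum_{k=0}^{q-2}F_i^kF_{i+1}^{-k}=(q-1)E_i$. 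The delicate points I anticipate are precisely this reindexing and the bookkeeping of the powers of $q$ in the $N$-relations — ensuring, for instance, that the exponents in the formula for $NT_1\cdots T_{n-1}N$ really do telescope; everything else is a mechanical transcription of identities of $\QQ$ through the multiplication rules of Theorem~\ref{055}.
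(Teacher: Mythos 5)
Your proposal is correct and follows essentially the same route as the paper: every relation is read off from the multiplication rules of Theorem~\ref{055} combined with the monoid identities in $\QQ\simeq\F_q(\I_n)$, and your treatment of the quadratic relation via \eqref{080} and the reindexing $j\mapsto q-1-j$ is exactly the computation the paper performs. The only difference is one of detail: the paper dispatches \eqref{072}, \eqref{076} and \eqref{077} by citing Remark~\ref{086} and Solomon's Lemma 2.6, whereas you derive them directly from Theorem~\ref{055}\eqref{047}--\eqref{048}, which is a correct (and self-contained) way of doing the same thing.
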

\begin{proof}
The generating set follows from Theorem~\ref{055} and \eqref{071}. The relations are derived from Theorem~\ref{055} and Proposition~\ref{025}. By Remark~\ref{086}, relations \eqref{072}, \eqref{076} and \eqref{077} are proved exactly as in \cite[Lemma 2.6]{So04}. Equation \eqref{073} follows directly from Theorem~\ref{055}\eqref{046}.

For the first identity in \eqref{074}, write $r=a^k$, then, using \eqref{080} we have\[T_i^2=q+\sum_{r\in\Fbb_q^\times}T_{h_k(r)}T_{s_i}=q+(q-1)F_i^{\frac{q-1}{2}}\left(\frac{1}{q-1}\sum_{r\in\Fbb_q^\times}F_i^{\frac{q-1}{2}}T_{h_i(r)}\right)T_i=q+(q-1)F_i^{\frac{q-1}{2}}E_iT_i.\]The second identity in \eqref{074} follows directly from Theorem~\ref{055}\eqref{046} and \eqref{044}. To prove \eqref{075}, we use \eqref{085}:\[F_iN=T_{a_i}T_\nu=T_{a_i\nu}=T_{\nu a_{i+1}}=T_\nu T_{a_{i+1}}=NF_{i+1}.\]The second relation in \eqref{075} is shown analogously.
\end{proof}

\subsection{The Rook Yokonuma--Hecke algebra}\label{007}

In what follows, $\Kbb$ denotes a field of characteristic zero and $u$ denotes an indeterminate. 

Given integers $n,d\geq1$, the \emph{Rook Yokonuma--Hecke algebra} is the $\Kbb(u)$-algebra $\RY_{d,n}(u)$ presented by generators $\Tsf_1,\ldots,\Tsf_{n-1}$, $\Fsf_1,\ldots,\Fsf_n$, and $\Nsf$, subject to the relations:
\begin{gather}
\Tsf_i\Tsf_j\Tsf_i=\Tsf_j\Tsf_i\Tsf_j,\quad|i-j|=1;\qquad\Tsf_i\Tsf_j=\Tsf_j\Tsf_i,\quad |i-j|>1;\label{087}\\
\Fsf_i^d=1;\qquad \Fsf_i\Fsf_j=\Fsf_j\Fsf_i;\label{088}\\[-0.22cm]
\Tsf_i^2=u+(u-1)\Fsf_i^{\frac{d^2-d}{2}}\Esf_i\Tsf_i;\qquad\Tsf_i\Fsf_j=\Fsf_{js_i}\Tsf_i;\label{089}\\
\Fsf_i\Nsf=\Nsf\Fsf_{i+1},\quad i<n;\qquad\Nsf\Fsf_1=\Fsf_n\Nsf=\Nsf;\label{090}\\
\Nsf^{i+1}\Tsf_i=u\Nsf^{i+1};\qquad \Tsf_i\Nsf^{n-i+1}=u\Nsf^{n-i+1};\label{091}\\
\Tsf_i\Nsf=\Nsf\Tsf_{i+1};\qquad\Nsf\Tsf_1\cdots\Tsf_{n-1}\Nsf=u^{n-1}\Nsf;\label{092}
\end{gather}
where\begin{equation}\Esf_i=\frac{1}{d}\sum_{k=0}^{d-1}\Fsf_i^k\Fsf_{i+1}^{-k}.\label{131}\end{equation}
By \eqref{080}, we note that $\Fsf_i^{\frac{d^2-d}{2}}=\Fsf_i^{\,\frac{d}{2}}$ when $d$ is even, and $\Fsf_i^{\frac{d^2-d}{2}}=1$ otherwise. Moreover, each $\Tsf_i$ is invertible; more precisely,\begin{gather}\Tsf_i^{-1}=u^{-1}\Tsf_{i}-\left(1-u^{-1}\right)\Fsf_i^{\frac{d^{2}-d}{2}}\Esf_i.\label{130}\end{gather}

Applying \eqref{092}, most relations in \eqref{091} become superfluous and can be reduced to the specialized relations:\begin{equation}
\Nsf^2\Tsf_1=u\Nsf^2;\qquad \Tsf_{n-1}\Nsf^2=u\Nsf^2.\label{107}\end{equation}Indeed, we have\[\Nsf^{i+1}\Tsf_i\stackrel{\eqref{092}}{=}\Nsf^2\Tsf_1\Nsf^{i-1}\stackrel{\eqref{107}}{=}u\Nsf^{i+1}\qquad\text{and}\qquad\Tsf_i\Nsf^{n-i+1}\stackrel{\eqref{092}}{=}\Nsf^{n-i-1}\Tsf_{n-1}\Nsf^2\stackrel{\eqref{091}}{=}u\Nsf^{n-i+1}.\]

\begin{rem}
The quadratic relation in \eqref{089} differs from the original relation in \cite{Yo67}. Our formulation is instead inspired by the generic presentation developed in \cite{DaDou17}. Specifically, one can recover the original Yokonuma-type quadratic structure by defining a new set of generators:\[\Tsf_i':=\Fsf_{i+1}^{\frac{d^2-d}{2}}\Tsf_i.\]More precisely, with these generators we obtain the following relation:\[\Tsf_i'\,^2=u\left(\Fsf_i\Fsf_{i+1}\right)^{\frac{d^2-d}{2}}+(u-1)\Esf_i\Tsf_i'.\]
\end{rem}

Our next goal is to construct a spanning set for the algebra $\RY_{d,n}(u)$. See Theorem~\ref{123}. Before defining our proposed spanning set, we first introduce some notation and preliminary definitions.

Given indexes $j,k\in[n]$ with $j\leq k$, and a sequence $\mbf=(m_1,\ldots,m_r)\in(\Z/d\Z)^r$, we define\[\Tsf_{k,\,j}=\begin{cases}
1&\text{if }k=j\\
\Tsf_{k-1}\cdots\Tsf_j&\text{otherwise},
\end{cases}\qquad\text{and}\qquad\Fsf_\mbf=\Fsf_1^{m_1}\cdots \Fsf_r^{m_r}.\]Additionally, for $\omega\in\S_n$ and $s_{i_1}\cdots s_{i_{l}}$ a reduced expression for $\omega$, we set $\Tsf_\omega=\Tsf_{i_1}\cdots \Tsf_{i_l}$. Matsumoto's theorem \cite{Mt64} ensures that this definition does not depend on the choice of the reduced expression.

For a subset $A=\{a_1<\cdots<a_k\}\subseteq[n]$, we define\[\Tsf_A=\Tsf_{a_{1,1}}\ldots\Tsf_{a_{k,k}}\qquad\text{and}\qquad\bTsf_A=\Tsf_{n-k+1,a_1}\ldots\Tsf_{n,a_k}.\]Observe that $\Tsf_A=\bTsf_A=1$ whenever $|A|=n$.

Set $\Csf_{n,0}=\{\Nsf^n\}$, and for $r\in[n]$ we define\begin{equation}\Csf_{n,r}=\left\{\Tsf_A\Fsf_\mbf\Tsf_\omega\Nsf^{n-r}\bTsf_B\mid|A|=|B|=r,\,\mbf\in(\Z/d\Z)^r,\,\omega\in\S_r\right\}.\label{129}\end{equation}We then define our proposed spanning set as ${\displaystyle\Csf_n=\bigsqcup_{r=0}^n\Csf_{n,r}}$. Observe that\begin{equation}|\Csf_{n,r}|=d^r\binom{n}{r}^2r!,\quad\text{and hence}\quad|\Csf_n|=\sum_{r=0}^nd^r\binom{n}{r}^2r!.\label{127}\end{equation}

The following theorem will be proved in Subsubsection~\ref{113}.
\begin{thm}\label{123}
The set $\Csf_n$ spans the algebra $\RY_{d,n}(u)$ over $\Kbb(u)$. Consequently, $\dim(\RY_{d,n}(u))\leq|\Csf_n|$.
\end{thm}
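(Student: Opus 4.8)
The plan is to show that $\Csf_n$ is closed, up to $\Kbb(u)$-linear combinations, under left multiplication by each of the algebra generators $\Tsf_i$, $\Fsf_i$, and $\Nsf$; since $\Nsf^n = \Csf_{n,0}$ already lies in the span and the identity $1 = \Tsf_\emptyset\Fsf_\emptyset\Tsf_1\Nsf^n\bTsf_\emptyset$-type element is easily seen to be reachable (indeed $1\in\Csf_{n,n}$ with $A=B=[n]$, $\mbf=0$, $\omega=\id$), this suffices to conclude that $\RY_{d,n}(u) = \Kbb(u)\text{-}\mathrm{span}(\Csf_n)$, and hence $\dim \RY_{d,n}(u)\le|\Csf_n|$. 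The strategy mirrors the treatment of the rook monoid algebra in \cite{So04}, \cite{Ha04}, with the extra framing generators $\Fsf_i$ handled exactly as in the Yokonuma--Hecke setting \cite{DaDou17}. Concretely, fix a basis element $x=\Tsf_A\Fsf_\mbf\Tsf_\omega\Nsf^{n-r}\bTsf_B\in\Csf_{n,r}$ and analyze $\Tsf_i x$, $\Fsf_i x$, $\Nsf x$ in turn.

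For $\Fsf_i x$: using the commutation $\Tsf_j\Fsf_k=\Fsf_{ks_j}\Tsf_j$ from \eqref{089} repeatedly, one moves $\Fsf_i$ past the prefix $\Tsf_A$, converting it into some $\Fsf_l$ times $\Tsf_A$; absorbing $\Fsf_l$ into $\Fsf_\mbf$ (and using $\Fsf_i^d=1$) either keeps us inside $\Csf_{n,r}$ directly, or — if $l$ lands outside $[r]$ — one must first push the offending $\Fsf_l$ through $\Tsf_\omega$ and then into $\Nsf^{n-r}$, where relation \eqref{090} ($\Nsf\Fsf_1=\Nsf$, $\Fsf_n\Nsf=\Nsf$, $\Fsf_i\Nsf=\Nsf\Fsf_{i+1}$) either annihilates the framing or shifts its index back into range. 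For $\Nsf x$: one commutes $\Nsf$ leftward past $\Tsf_A$ using $\Tsf_i\Nsf=\Nsf\Tsf_{i+1}$ from \eqref{092} and past $\Fsf_\mbf$ using \eqref{090}; the key structural point, already present in the rook case, is that $\Nsf\Tsf_A$ collapses via \eqref{091}–\eqref{092} to an element of the form $\Tsf_{A'}\Nsf^{n-r'}$ with $|A'|=r'=r-1$ (dropping the smallest element of $A$), at the cost of scalars $u^{\bullet}$; symmetrically $\bTsf_B\Nsf$ is absorbed on the right — here I would reuse the length-bookkeeping of Section~\ref{153}, in particular Proposition~\ref{031} and Corollary~\ref{039}, to track the powers of $u$ and confirm the ranks match \eqref{129}. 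For $\Tsf_i x$: this is the substantive case. One pushes $\Tsf_i$ through $\Tsf_A$ using the braid/commutation relations \eqref{087}; whenever a braid move produces $\Tsf_j^2$, the quadratic relation \eqref{089} replaces it by $u + (u-1)\Fsf_j^{(d^2-d)/2}\Esf_j\Tsf_j$, and since $\Esf_j$ is a $\Kbb$-combination of monomials in $\Fsf_j,\Fsf_{j+1}$, the result is a $\Kbb(u)$-combination of shorter words to which the framing-commutation analysis applies. The outcome is either a word $\Tsf_{A'}\Fsf_{\mbf'}\Tsf_{\omega'}\Nsf^{n-r}\bTsf_B$ with $\ell(\omega')$ or $|A'|$-data still within \eqref{129} (when $\Tsf_i$ lengthens or fixes the $\omega$-part), or a reduction in which $\Tsf_i$ meets $\Nsf^{n-r}$ and is swallowed by $\Nsf^{i+1}\Tsf_i=u\Nsf^{i+1}$ from \eqref{091}; the bookkeeping that every outcome stays inside $\bigsqcup_r\Csf_{n,r}$ is where most of the work lies.

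The main obstacle is the $\Tsf_i x$ reduction: one must set up a well-founded induction — ordering spanning-set words by the pair $(r, \ell(\omega))$ lexicographically, or by total length $\ell(\Tsf_A)+\ell(\omega)+\ell(\bTsf_B)$ — and verify that every application of the quadratic relation, every braid move across the ``staircase'' prefixes $\Tsf_A$ and $\bTsf_B$, and every collision with $\Nsf^{n-r}$ strictly decreases this quantity or leaves it fixed while bringing the word to the normal form \eqref{129}. This is precisely the point at which the extra $\Fsf$-generators could in principle obstruct the argument, but they do not: the framing part $\Fsf_\mbf$ only ever gets multiplied, permuted by $\omega$-conjugation, or truncated by the $\Nsf$-relations, and never feeds back into the length of the $\Tsf$-part, so the induction from the rook-monoid case goes through essentially verbatim once the $\Fsf$-commutations of \eqref{089}, \eqref{090} are interleaved at each step. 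The symmetric closure under \emph{right} multiplication, needed only for completeness, follows by the anti-automorphism sending $\Tsf_i\mapsto\Tsf_i$, $\Fsf_i\mapsto\Fsf_i$, $\Nsf\mapsto\Nsf$ and reversing words — or is simply not required, since left-closedness of a spanning set containing $1$ already yields the dimension bound.
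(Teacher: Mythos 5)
Your overall strategy is the same as the paper's: observe that $1\in\Csf_{n,n}$ and show that $\Csf_n$ is closed, up to $\Kbb(u)$-linear combinations, under multiplication by each generator (the paper's Lemma~\ref{122} does this for \emph{right} multiplication; your left-handed version is equally legitimate for the dimension bound). However, what you have written is a plan, not a proof: the entire substantive content of the theorem lies in the case analyses you defer — how $\Tsf_i$ interacts with the staircase factors $\Tsf_A$, $\bTsf_B$ (four cases according to whether $i,i+1$ lie in the indexing set), and how a stray $\Nsf$ migrates through $\Tsf_A\Fsf_\mbf\Tsf_\omega$ to merge with $\Nsf^{n-r}$. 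The paper spends several pages on exactly this bookkeeping (Lemmas~\ref{121} and~\ref{122}, using the auxiliary identities \eqref{111} and \eqref{120}), and your proposal acknowledges that this ``is where most of the work lies'' without carrying it out. As submitted, the proof is therefore incomplete.

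Two specific claims in the plan are also wrong as stated. First, there is no anti-automorphism of $\RY_{d,n}(u)$ fixing $\Tsf_i$, $\Fsf_i$, $\Nsf$ and reversing words: the relations $\Fsf_i\Nsf=\Nsf\Fsf_{i+1}$ and $\Tsf_i\Nsf=\Nsf\Tsf_{i+1}$ are not palindromic, so reversal sends them to $\Nsf\Fsf_i=\Fsf_{i+1}\Nsf$, which is \emph{not} a defining relation. (A genuine anti-automorphism exists only after also reversing indices, $\Tsf_i\mapsto\Tsf_{n-i}$, $\Fsf_i\mapsto\Fsf_{n+1-i}$.) You correctly note this is dispensable, so it does no damage. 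Second, the claim that multiplying by $\Nsf$ always drops the rank by one, ``dropping the smallest element of $A$,'' is false: the rank drops only when the relevant boundary index lies in the domain (in the paper's right-handed version, only when $n\in B$; otherwise $\bTsf_B\Nsf=u^{r}\,(\text{staircase})\,\bTsf_{B'}$ with $B'=B+1$ and the rank is preserved). This dichotomy is precisely why part~\eqref{119} of Lemma~\ref{122} lands in $\langle\Csf_{n,r}\cup\Csf_{n,r-1}\rangle$ rather than in $\langle\Csf_{n,r-1}\rangle$, and glossing over it would break the induction you propose.
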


As a consequence of Theorem~\ref{123}, we derive the following results:
\begin{thm}\label{057}
The algebra $H_\Kbb(M,U):=\Kbb\otimes_\Z H(M,U)$ is isomorphic to the Rook Yokonuma--Hecke algebra $\RY_{d,n}(u)$ under the specialization $d=q-1$ and $u=q$.
\end{thm}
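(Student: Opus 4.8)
The plan is to realize $H_\Kbb(M,U)$ as a quotient of the specialized algebra $\RY_{q-1,n}(q)$ --- meaning the $\Kbb$-algebra presented by \eqref{087}--\eqref{092} with $d$ the integer $q-1$ and the indeterminate $u$ replaced by the scalar $q\in\Kbb$ --- and then to force the quotient map to be an isomorphism by a dimension count. First I would invoke Proposition~\ref{078}: the elements $T_i$, $F_i$, $N$ generate $H(M,U)$, hence also $H_\Kbb(M,U)$, and they satisfy the relations \eqref{072}--\eqref{077}. Under the substitution $d=q-1$, $u=q$ these are exactly the defining relations \eqref{087}--\eqref{092}: the operator $E_i=\tfrac1{q-1}\sum_{k=0}^{q-2}F_i^kF_{i+1}^{-k}$ is the image of $\Esf_i$ from \eqref{131}; the constant $q^{n-1}$ in \eqref{077} is the image of $u^{n-1}$ in \eqref{092}; and, since $F_i^{\,q-1}=1$ forces $F_i^{\frac{(q-1)^2-(q-1)}2}=F_i^{\frac{q-1}2}$, the quadratic relation \eqref{074} is the image of \eqref{089}. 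Therefore there is a $\Kbb$-algebra homomorphism $\phi\colon\RY_{q-1,n}(q)\to H_\Kbb(M,U)$ sending $\Tsf_i\mapsto T_i$, $\Fsf_i\mapsto F_i$, $\Nsf\mapsto N$, and it is surjective because these elements generate $H_\Kbb(M,U)$.

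Next I would compare dimensions. By Corollary~\ref{126}, $H(M,U)$ is free of rank $|\QQ|$ over $\Z$, so $\dim_\Kbb H_\Kbb(M,U)=|\QQ|$; via the isomorphism $\QQ\simeq\F_q(\I_n)$ and the enumeration \eqref{128} this equals $\sum_{r=0}^n(q-1)^r\binom nr^2 r!$. On the other side, the rewriting argument behind Theorem~\ref{123} uses only the defining relations together with the invertibility of $u$ and of $d$; since $q$ and $q-1$ are invertible in $\Kbb$ (characteristic zero), the same argument shows that the specialized set $\Csf_n$ spans $\RY_{q-1,n}(q)$, so $\dim_\Kbb\RY_{q-1,n}(q)\le|\Csf_n|=\sum_{r=0}^n(q-1)^r\binom nr^2 r!$ by \eqref{127} with $d=q-1$. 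These two quantities coincide, so
\[\dim_\Kbb\RY_{q-1,n}(q)\ \le\ |\QQ|\ =\ \dim_\Kbb H_\Kbb(M,U)\ \le\ \dim_\Kbb\RY_{q-1,n}(q),\]
the last inequality coming from surjectivity of $\phi$. Hence all three numbers equal $|\QQ|$, and $\phi$, a surjection between $\Kbb$-vector spaces of the same finite dimension, is bijective, therefore an isomorphism of $\Kbb$-algebras.

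The substantive input on which the whole argument rests is Theorem~\ref{123} (the spanning property of $\Csf_n$, proved in Subsubsection~\ref{113}); the remainder is bookkeeping. The most delicate part of that bookkeeping is checking that the specialization $d=q-1$, $u=q$ genuinely identifies the presentation \eqref{087}--\eqref{092} with the relations of $H(M,U)$ obtained in Proposition~\ref{078}: concretely the exponent identity $F_i^{\frac{d^2-d}2}=F_i^{\frac{q-1}2}$ modulo $F_i^{\,q-1}=1$, and the matching of all scalars ($u\mapsto q$ throughout \eqref{089}, \eqref{091}, \eqref{092}, and $u^{n-1}\mapsto q^{n-1}$ in \eqref{092}). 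I would also remark in passing that $T_0=N^n\neq 0$ in $H(M,U)$, so the common dimension $|\QQ|=|\F_q(\I_n)|$ is strictly larger than $(q-1)^n n!$, reflecting that $\RY_{d,n}(u)$ properly extends the Yokonuma--Hecke algebra.
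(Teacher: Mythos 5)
Your proposal is correct and follows essentially the same route as the paper: use Proposition~\ref{078} to get a surjection $\RY_{q-1,n}(q)\twoheadrightarrow H_\Kbb(M,U)$, then squeeze dimensions via Corollary~\ref{126} and the spanning set of Theorem~\ref{123}, using $|\Csf_n|=|\QQ|$ from \eqref{127} and \eqref{128}. Your extra care in checking that the spanning argument survives the specialization $u=q$ and that $F_i^{\frac{d^2-d}{2}}=F_i^{\frac{q-1}{2}}$ modulo $F_i^{\,q-1}=1$ is a welcome refinement of details the paper leaves implicit, but it does not change the argument.
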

\begin{proof}
By Proposition~\ref{078} and the defining relations of $\RY_{d,n}(u)$, for $d=q-1$ and $u=q$, the assignment $\Tsf_i\mapsto T_i$, $\Fsf_i\mapsto F_i$ and $\Nsf\mapsto N$ defines an algebra epimorphism $\RY_{d,n}(u)\to H_\Kbb(M,U)$. Then, Corollary~\ref{126} yields the inequalities $|\QQ|=\dim(H_\Kbb(M,U))\leq\dim(\RY_{d,n}(u))\leq|\Csf_n|$. However, by \eqref{128} and \eqref{127}, $|\Csf_n|=|\QQ|$ whenever $d=q-1$ and $u=q$. Therefore the algebras are isomorphic.
\end{proof}

From this isomorphism, we immediately obtain a presentation and a linear basis for $H(M,U)$:
\begin{crl}\label{124}
The framed rook algebra $H(M,U)$ is presented by generators $T_1,\ldots,T_{n-1}$, $F_1,\ldots,F_n$ and $N$, subject to the relations in \eqref{072} through \eqref{077}.
\end{crl}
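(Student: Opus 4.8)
The plan is to descend the isomorphism of Theorem~\ref{057} from $\Kbb$ to $\Z$. Write $\Ubf$ for the $\Z$-algebra presented by generators $T_1,\dots,T_{n-1}$, $F_1,\dots,F_n$, $N$ subject to \eqref{072}--\eqref{077}; this makes sense over $\Z$ since, although $E_i$ itself is not integral, the combination $(q-1)E_i=\sum_{k=0}^{q-2}F_i^kF_{i+1}^{q-1-k}$ is, and \eqref{074} only involves $(q-1)E_i$. By Proposition~\ref{078} the rule $T_i\mapsto T_i$, $F_i\mapsto F_i$, $N\mapsto N$ defines a surjective $\Z$-algebra homomorphism $\psi\colon\Ubf\to H(M,U)$, and it remains to prove $\psi$ is injective. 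As $q$ is odd, $d=q-1$ is even, so $\Fsf_i^{(d^2-d)/2}=\Fsf_i^{(q-1)/2}$ and the relations \eqref{072}--\eqref{077} are precisely \eqref{087}--\eqref{092} at $d=q-1$, $u=q$; hence $\Kbb\otimes_\Z\Ubf=\RY_{q-1,n}(q)$ and $\id_\Kbb\otimes\psi$ is exactly the isomorphism of Theorem~\ref{057}, so at least $\ker\psi$ is a torsion $\Z$-module.

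To rule out torsion I would bound the size of $\Ubf$ using Theorem~\ref{123}. The straightening argument proving that theorem rewrites any monomial in the generators into a $\Z$-linear combination of the elements of $\Csf_n$ by repeated use of \eqref{072}--\eqref{077} only, and this rewriting is ``positive'': it never inverts $T_i$ or $q$ (the quadratic relation decreases the $T_i$-degree, \eqref{091}--\eqref{092} let powers of $N$ swallow adjacent $T_i$'s, and \eqref{089}, \eqref{090} push the $F_j$'s into canonical position), so it is valid over $\Z$. Hence $\Csf_n$, read inside $\Ubf$, generates $\Ubf$ as a $\Z$-module, and since $d=q-1$ we have $|\Csf_n|=\sum_{r}(q-1)^r\binom{n}{r}^2r!=|\QQ|$ from \eqref{127} and \eqref{128}. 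Now $\psi$ is a surjection from a module generated by $|\QQ|$ elements onto the free $\Z$-module $H(M,U)$ of rank $|\QQ|$ (Corollary~\ref{126}); such a surjection is necessarily an isomorphism (compose it with a surjection $\Z^{|\QQ|}\twoheadrightarrow\Ubf$: the resulting surjective endomorphism of $\Z^{|\QQ|}$ is an isomorphism). Therefore $\psi$ is an isomorphism and \eqref{072}--\eqref{077} is a presentation of $H(M,U)$.

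In fact this argument identifies $\Csf_n$ with the double-coset basis: $\psi$ carries $\Tsf_A\Fsf_\mbf\Tsf_\omega\Nsf^{n-r}\bTsf_B\in\Csf_{n,r}$ to $T_{\omega_A}T_{f_\mbf}T_\omega T_{\nu^{n-r}}T_{\overline{\omega}_B}$, and since $\ell(\omega_A)=m_L(\sigma)$, $\ell(\overline{\omega}_B)=m_R(\sigma)$ and $\ell(\omega)=n(\sigma)$, Proposition~\ref{031} makes the factorization $\sigma=\omega_A f_\mbf\sigma^\star\nu^{n-r}\overline{\omega}_B$ length-additive; by Theorem~\ref{055}(1)--(3) this product collapses to $T_\sigma$ with no extra power of $q$, and $(A,\mbf,\omega,B)\mapsto\sigma$ is the bijection onto $\QQ$ recorded in Subsection~\ref{149}. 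The one non-routine point is the second paragraph, namely confirming that the proof of Theorem~\ref{123} is denominator-free, so that it yields an honest $\Z$-spanning set of $\Ubf$ rather than merely a $\Z[q^{-1},(q-1)^{-1}]$-spanning set; granting that, the conclusion is purely formal.
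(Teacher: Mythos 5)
Your argument is correct, and its skeleton is the same as the paper's: the paper treats this corollary as an immediate consequence of Theorem~\ref{057}, i.e.\ of the epimorphism $\RY_{d,n}(u)\to H_{\Kbb}(M,U)$ from Proposition~\ref{078} together with the dimension count $|\Csf_n|=|\QQ|$. What you add, and what the paper does not address, is the descent from $\Kbb$ to $\Z$: read literally, Theorem~\ref{057} only presents $H_{\Kbb}(M,U)$, whereas the corollary is stated for the $\Z$-algebra $H(M,U)$. Your two extra ingredients are both sound. First, the relations \eqref{072}--\eqref{077} are genuinely integral because the prefactor $q-1$ in \eqref{074} cancels the $\tfrac{1}{q-1}$ inside $E_i$. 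Second, the straightening of Lemmas~\ref{121} and~\ref{122} is indeed denominator-free at the specialization $d=q-1$, $u=q$: the only a priori non-integral coefficient is $\tfrac{u-1}{d}$, which specializes to $1$; the intermediate factor $u^{1-n+r}$ in part~\eqref{119} of Lemma~\ref{122} is absorbed by $\Nsf\Tsf_1\cdots\Tsf_{n-1}\Nsf=u^{n-1}\Nsf$ into a final coefficient $u^r$; and the exponents $a_j-j$ are nonnegative since $a_j\geq j$. No inverses $\Tsf_i^{-1}$ are used in that proof, so $\Csf_n$ spans the presented algebra over $\Z$, and your ``surjection onto a free module of the same rank'' argument closes the loop. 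So the proposal is not merely a rephrasing: it upgrades the paper's $\Kbb$-statement to the integral statement actually claimed, at the cost of the one verification you correctly flagged as the non-routine point.
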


\begin{crl}
The set $C_n:=C_{n,0}\sqcup\cdots\sqcup C_{n,n}$ is a linear basis of $H(M,U)$, where each $C_{n,r}$ is defined analogously to \eqref{129} by replacing $\Tsf_i,\Fsf_i,\Nsf$ with $T_i,F_i$ and $N$, respectively.
\end{crl}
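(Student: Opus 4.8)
The plan is to prove the sharper fact that, as a subset of $H(M,U)$, the set $C_n$ is exactly the distinguished $\Z$-basis $\{T_\sigma:\sigma\in\QQ\}$ from \eqref{125}; the corollary then follows at once. Two preliminaries: first, $C_n\subseteq H(M,U)$, since each of its elements is a product of the ring elements $T_i,F_i,N$, and the $F$-exponents live in $\Z/(q-1)\Z$ because $F_i^{\,q-1}=1$ in $H(M,U)$ by Theorem~\ref{055}\eqref{046}. Second, by \eqref{128} and the isomorphism $\QQ\simeq\F_q(\I_n)$ of Section~\ref{153}, the index sets defining $C_{n,0},\dots,C_{n,n}$ have total cardinality $\sum_{r}(q-1)^r\binom{n}{r}^2r!=|\QQ|$, so $|C_n|\le|\QQ|$, with equality precisely when the defining products are pairwise distinct.

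The heart of the argument is to check that $T_\sigma\in C_{n,r}$ for every $\sigma\in\QQ^r$. For this I would use the normal form of Section~\ref{149}: writing $A=I(\sigma)$, $B=J(\sigma)$, one has $\sigma=\omega_A\,f_\mbf\,\omega\,\nu^{\,n-r}\,\overline{\omega}_B$, with $\mbf\in(\Fbb_q^\times)^r$, $\omega\in\S_r$, and $\omega_A=s_{i_1,1}\cdots s_{i_r,r}$, $\overline{\omega}_B=s_{n-r+1,j_1}\cdots s_{n,j_r}$ the staircase permutations defined there. A short count from Proposition~\ref{031} identifies $\ell(\omega_A)=m_L(\sigma)$, $\ell(\overline{\omega}_B)=m_R(\sigma)$ and $\ell(\omega)=n(\sigma)$, so that $\ell(\sigma)=\ell(\omega_A)+\ell(\omega)+\ell(\overline{\omega}_B)$; hence every factorization above is length-additive. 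Feeding this into Theorem~\ref{055}—part~\eqref{046} for the torus factor (which yields $T_{f_\mbf}=F_\mbf$ and lets it pass the permutation factors), the iterate \eqref{056} of part~\eqref{048} for $T_\nu^{\,n-r}=N^{n-r}$, and part~\eqref{047} applied one generator at a time along the reduced words, exactly as in the closing paragraphs of the proof of Theorem~\ref{055} where $T_\sigma=T_tT_uT_\nu^{\,i}T_w$ is obtained—gives $T_{\omega_A}=T_A$, $T_{\overline{\omega}_B}=\overline{T}_B$, and $T_\omega$ for the standardization factor (a product of $T_i$'s along a reduced word, well defined by Matsumoto's theorem via the braid relations \eqref{072}). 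Multiplying out, $T_\sigma=T_A\,F_\mbf\,T_\omega\,N^{n-r}\,\overline{T}_B\in C_{n,r}$.

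This shows $\{T_\sigma:\sigma\in\QQ\}\subseteq C_n$; as the left-hand set has $|\QQ|$ pairwise distinct elements and $|C_n|\le|\QQ|$, we get $C_n=\{T_\sigma:\sigma\in\QQ\}$ (and, as a bonus, that the defining products of $C_n$ are indeed pairwise distinct). By \eqref{125} this set is a $\Z$-basis of $H(M,U)$, which is the corollary. A shortcut for the linear-independence half: combine Theorem~\ref{057} with Theorem~\ref{123} to see directly that $C_n$ is a $\Kbb$-basis of $H_\Kbb(M,U)$, hence $\Z$-linearly independent; then the computation above is needed only to show that $C_n$ $\Z$-spans $H(M,U)$.

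The main obstacle is the length bookkeeping in the second step: one must verify that along the chosen factorization of $\sigma$ Theorem~\ref{055}\eqref{047} is always applied in its length-increasing case—so that the correction terms $\sum_{r\in\Fbb_q^\times}T_{h_k(r)}T_\sigma$ never arise—and that $\omega_A$, $\overline{\omega}_B$ are genuinely reduced as written. Neither point is deep: the reducedness of these staircase products is classical, and the length identities above are precisely the combinatorics behind Proposition~\ref{031}; the inductive argument transporting everything into $H(M,U)$ just repeats the one already carried out for Theorem~\ref{055}.
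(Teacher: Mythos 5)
Your argument is correct, but it takes a genuinely different route from the paper's. The paper obtains this corollary as an immediate consequence of the isomorphism theorem: the epimorphism $\RY_{q-1,n}(q)\to H_\Kbb(M,U)$ of Theorem~\ref{057} carries the spanning set $\Csf_n$ of Theorem~\ref{123} onto $C_n$, and the dimension count $|\Csf_n|=|\QQ|=\dim H_\Kbb(M,U)$ already established inside the proof of Theorem~\ref{057} forces $C_n$ to be a basis. You instead work entirely inside the concrete algebra and identify $C_n$ with the distinguished basis $\{T_\sigma\mid\sigma\in\QQ\}$ of \eqref{125} via the normal form $\sigma=\omega_A f_\mbf\,\sigma^\star\nu^{n-r}\,\overline{\omega}_B$ from Subsection~\ref{149}. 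This is sound: commuting the torus factor to the far left turns your factorization into one of the shape $\sigma=tu\nu^{n-r}w$ with $u=\omega_A\sigma^\star$, $w=\overline{\omega}_B$ and $\ell(\sigma)=\ell(u)+\ell(w)$, which follows from Proposition~\ref{031} together with the reducedness of the staircase words and the identity $\ell(\omega_A\sigma^\star)=\ell(\omega_A)+\ell(\sigma^\star)$ (valid because $\omega_A$ is the minimal-length representative of its coset modulo $\S_r\times\S_{[r+1,n]}$). The identity $T_\sigma=T_tT_uT_\nu^{n-r}T_w$ under this length condition is exactly what the closing induction in the proof of Theorem~\ref{055} establishes, so you do not actually need to re-run the length-increasing bookkeeping you flag as the main obstacle; only the length identities themselves require checking. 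Your route buys more than the paper's: it yields the equality of sets $C_n=\{T_\sigma\mid\sigma\in\QQ\}$ with an explicit bijection, hence a genuine $\Z$-basis of the $\Z$-module $H(M,U)$ (the paper's derivation, as written, directly gives only a $\Kbb$-basis of $H_\Kbb(M,U)$), and it is independent of the abstract algebra $\RY_{d,n}(u)$. The price is the extra combinatorial verification; the paper's argument is shorter given the machinery already in place.
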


\subsubsection{Proof of Theorem~\ref{123}}\label{113}
To prove that $\Csf_n$ forms a spanning set of $\RY_{d,n}(u)$, it suffices to show that $\Csf_n$ spans $\RY_{d,n}(u)$ as a $\Kbb(u)$-algebra. To this end, we first establish some preliminary results.

By applying \eqref{089}, for $i\in[n]$ and $\omega\in\S_n$, we have the relations\[\Tsf_\omega\Fsf_i=\Fsf_{\omega i}\Tsf_\omega\qquad\text{and}\qquad\Fsf_i\Tsf_\omega=\Tsf_\omega\Fsf_{i\omega}.\]Recall that there are natural left and right actions of $\S_n$ on $(\Z/d\Z)^n$ given, respectively, by\[\omega\mbf=(m_{\omega1},\dots,m_{\omega n})\qquad\text{and}\qquad\mbf\omega=(m_{1\omega},\dots,m_{n\omega}).\]Therefore, for every $\mbf=(m_1, \dots, m_n)\in (\Z/d\Z)^n$ and $\omega\in\S_n$, we obtain\[\Fsf_\mbf\Tsf_\omega=\Tsf_\omega\Fsf_{1\omega}^{m_1}\cdots\Fsf_{n\omega}^{m_n}=\Tsf_\omega\Fsf_{\omega\mbf},\quad\text{and similarly,}\quad\Tsf_\omega\Fsf_\mbf=\Fsf_{\mbf\omega}\Tsf_\omega.\]

For instance, for $d=6$, $\mbf=(3,5,4)$ and $\omega=s_1s_2$, we have\[\Fsf_\mbf\Tsf_\omega=\Fsf_1^3\Fsf_2^5\Fsf_3^4\Tsf_1\Tsf_2=\Tsf_1\Tsf_2\Fsf_1^5\Fsf_2^4
\Fsf_3^3=\Tsf_\omega\Fsf_{\mbf'},\]where $\mbf'=\omega\mbf=(m_2,m_3,m_1)=(5,4,3)$.

Using relation \eqref{087}, we verify that
\begin{equation}\label{111}
\Tsf_{j,k}\Tsf_l=\left\{\begin{array}{ll}
\Tsf_l\Tsf_{j,k}&\text{if }l<k-1 \text{ or}\ l>j+1 \\
\Tsf_{j,k-1}&\text{if }l=k-1\\[-0.21cm]
u\Tsf_{j,k+1}+(u-1)\Fsf_k^{\frac{d^2-d}{2}}\Esf_{k,j}\Tsf_{j,k}&\text{if }l=k\\
\Tsf_{l-1}\Tsf_{j,k}&\text{if }k<l<j.
\end{array}\right.\end{equation}
where ${\displaystyle\Esf_{k,j}=\frac{1}{d}\sum_{s=0}^{d-1}\Fsf_k^s\Fsf_j^{-s}}$.

Applying \eqref{111}, one has for all $j,k\in[n]$ with $j\leq k$ that
\begin{equation}
\Tsf_{k,\,j}\Tsf_{k+1,\,j+1}\Tsf_j=\Tsf_k\Tsf_{k,\,j}\Tsf_{k+1,\,j+1}.\label{120}
\end{equation}

Next, we prove two technical lemmas.

\begin{lem}\label{121}
For $\mbf\in(\Z/d\Z)^n$ and $\omega\in\S_n$ with $n\geq2$, the element $\Fsf_\mbf\Tsf_\omega$ is a linear combination of elements of the form $\Fsf_{\mbf'}\Tsf_x\alpha\Tsf_y$, where $x,y\in\S_{n-1}$, $\mbf'\in(\Z/d\Z)^n$ and $\alpha\in\{1,\Tsf_{n-1}\}$.
\end{lem}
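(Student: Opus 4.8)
The plan is to deduce the statement from the coset structure of the parabolic subgroup $\S_{n-1}=\langle s_1,\dots,s_{n-2}\rangle\le\S_n$; in fact this yields a single term, which is in particular a linear combination of the required form. First I would recall that the minimal-length representatives of the left cosets in $\S_{n-1}\backslash\S_n$ are $c_0=1$ together with the descending chains $c_j=s_{n-1}s_{n-2}\cdots s_{n-j}$ for $j\in[n-1]$, so $\ell(c_j)=j$, and that every $\omega\in\S_n$ factors uniquely as $\omega=w\,c_j$ with $w\in\S_{n-1}$ and $\ell(\omega)=\ell(w)+\ell(c_j)$ (the length-additivity property of minimal coset representatives).

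Granting this, the argument is a short case split. If $j=0$ then $\omega=w\in\S_{n-1}$ and $\Fsf_\mbf\Tsf_\omega$ already has the required shape $\Fsf_{\mbf'}\Tsf_x\alpha\Tsf_y$ with $\mbf'=\mbf$, $x=w$, $\alpha=1$, $y=1$. If $j\ge1$, I would write $c_j=s_{n-1}\,y$ with $y:=s_{n-2}\cdots s_{n-j}\in\S_{n-1}$ and $\ell(c_j)=1+\ell(y)$; concatenating reduced expressions for $w$, for $s_{n-1}$, and for $y$ then produces a word of length $\ell(w)+1+\ell(y)=\ell(\omega)$, hence a reduced expression for $\omega$. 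Matsumoto's theorem \cite{Mt64} (used to define $\Tsf_\omega$ in the first place) gives $\Tsf_\omega=\Tsf_w\,\Tsf_{n-1}\,\Tsf_y$, so $\Fsf_\mbf\Tsf_\omega=\Fsf_\mbf\,\Tsf_w\,\Tsf_{n-1}\,\Tsf_y$ is of the required shape with $\mbf'=\mbf$, $x=w$, $\alpha=\Tsf_{n-1}$.

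The main point I would actually prove, rather than merely cite, is the description of the minimal coset representatives, since it is exactly what makes peeling off a single $\Tsf_{n-1}$ possible: if $c\neq1$ is the minimal element of its coset then $\ell(s_ic)>\ell(c)$ for all $i\le n-2$, so $c$ has a left descent only at $i=n-1$, giving $c=s_{n-1}c'$ with $\ell(c')=\ell(c)-1$; and since any such $c$ satisfies $c(n)=n-1$ (this is visible from the one-line form of a minimal representative), the element $c'=s_{n-1}c$ fixes $n$, hence lies in $\S_{n-1}$, and iterating recovers the chain $c=s_{n-1}s_{n-2}\cdots s_{n-j}$. I expect this elementary Coxeter bookkeeping to be the only mildly delicate ingredient; everything else is immediate. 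Alternatively one can bypass coset theory and induct on the number of occurrences of $\Tsf_{n-1}$ in a reduced word for $\omega$, using the braid relation $\Tsf_{n-1}\Tsf_{n-2}\Tsf_{n-1}=\Tsf_{n-2}\Tsf_{n-1}\Tsf_{n-2}$ when the block between two consecutive $\Tsf_{n-1}$'s meets index $n-2$, and the quadratic and $\Tsf$--$\Fsf$ commutation relations of \eqref{089} together with \eqref{088} to absorb and commute to the front the $\Fsf$-terms produced when that block lies in $\langle\Tsf_1,\dots,\Tsf_{n-3}\rangle$; it is this second route that genuinely produces linear combinations, but the coset argument above is cleaner.
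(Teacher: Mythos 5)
Your argument is correct, and it takes a genuinely different route from the paper's. The paper proves the lemma by induction on $n$: it reduces to a word $\Fsf_\mbf\Tsf_x\Tsf_{n-1}\Tsf_y\Tsf_{n-1}\Tsf_z$ with $x,y,z\in\S_{n-1}$, normalizes $\Tsf_y$ via the induction hypothesis, and then removes the second occurrence of $\Tsf_{n-1}$ either by the braid relation or by the quadratic relation in \eqref{089} --- the latter being what forces genuine linear combinations, with $\Esf$-terms that must then be pushed to the left using \eqref{088} and the commutation in \eqref{089}. Your observation that the parabolic factorization $\omega=w\,c_j$, with $w\in\S_{n-1}$, $c_j=s_{n-1}s_{n-2}\cdots s_{n-j}$ and $\ell(\omega)=\ell(w)+j$, combined with Matsumoto's theorem, yields $\Tsf_\omega=\Tsf_w\,\Tsf_{n-1}\,\Tsf_{s_{n-2}\cdots s_{n-j}}$ on the nose is sharper: it exhibits $\Fsf_\mbf\Tsf_\omega$ as a \emph{single} monomial of the required shape, using no algebra relations beyond the well-definedness of $\Tsf_\omega$. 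The price is the Coxeter-theoretic input on minimal representatives of the cosets $\S_{n-1}g$ (these are right cosets, not left, a harmless slip in your wording), which is standard and could simply be cited from \cite{Hm97} rather than reproved; your sketch of it is essentially right, though the assertion $c(n)=n-1$ is convention-dependent --- the point that matters is that $s_{n-1}c$ fixes $n$ and hence lies in $\S_{n-1}$, which does hold for the minimal representative. Your alternative route at the end, inducting on the number of occurrences of $\Tsf_{n-1}$ in a reduced word, is in substance the paper's proof.
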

\begin{proof}
We proceed by induction on $n$. For $n=2$ the statement directly holds. Assume the statement is true for $n-1\geq2$. Let $\Fsf_\mbf\Tsf_\omega$ with $\mbf\in(\Z/d\Z)^n$ and $\omega\in\S_n$. It suffices to consider the case $\Fsf_\mbf\Tsf_\omega=\Fsf_\mbf\Tsf_x\Tsf_{n-1}\Tsf_y\Tsf_{n-1}\Tsf_z$, where $x,y,z\in\S_{n-1}$. If $\Tsf_y\in\S_{n-2}$, then\[\begin{array}{rclll}
\Fsf_\mbf\Tsf_x\Tsf_{n-1}\Tsf_y\Tsf_{n-1}\Tsf_z&=&\Fsf_\mbf\Tsf_x\Tsf_y\Tsf_{n-1}^2\Tsf_z&=&\Fsf_\mbf(u\Tsf_x\Tsf_y\Tsf_z+(u-1)\Tsf_x\Tsf_y\Fsf_{n-1}^{\frac{d^2-d}{2}}\Esf_{n-1}\Tsf_{n-1}\Tsf_z)\\
&&&=&u\Fsf_\mbf\Tsf_x\Tsf_y\Tsf_z+(u-1)\Fsf_\mbf\Fsf_{\tau(n-1)}^{\frac{d^2-d}{2}}\Esf_{\tau(n-1),n}\Tsf_{x}\Tsf_y\Tsf_{n-1}\Tsf_z,
\end{array}\]where $\tau=xy$. If $\Tsf_{y}=\Tsf_{y_1}\Tsf_{n-2}\Tsf_{y_2}$ with $y_1,y_2\in\S_{n-2}$, then
\[\begin{array}{rclll}
\Fsf_\mbf\Tsf_x\Tsf_{n-1}\Tsf_y\Tsf_{n-1}\Tsf_z
&=&\Fsf_\mbf\Tsf_x\Tsf_{n-1}\Tsf_{y_1}\Tsf_{n-2}\Tsf_{y_2}\Tsf_{n-1}\Tsf_z\\
&=&\Fsf_\mbf\Tsf_x\Tsf_{y_1}\Tsf_{n-1}\Tsf_{n-2}\Tsf_{n-1}\Tsf_{y_2}\Tsf_z\\
&=&\Fsf_\mbf\Tsf_x\Tsf_{y_1}\Tsf_{n-2}\Tsf_{n-1}\Tsf_{n-2}\Tsf_{y_2}\Tsf_z
&=&\Fsf_\mbf\Tsf_{x'}\Tsf_{n-1}\Tsf_{y'}
\end{array}\]
where $x'=xy_1s_{n-2}$ and $y'=s_{n-2}y_2z$. By the induction hypothesis, we obtain that $\Fsf_\mbf\Tsf_\omega$ is a linear combination of elements as required.
\end{proof}

\begin{lem}\label{122}
For $r\in[n]$ and every $X\in\Csf_{n,r}$, the following hold:
\begin{enumerate}
\item\label{117} $X\Fsf_i\in\Csf_{n,r}$ for all $i\in[n]$.
\item\label{118} $X\Tsf_i\in\langle\Csf_{n,r}\rangle$ for all $i\in[n-1]$.
\item\label{119} $X\Nsf\in\langle\Csf_{n,r}\cup\Csf_{n,r-1}\rangle$; that is, $X\Nsf$ is a linear combination of elements from $\Csf_{n,r}\cup\Csf_{n,r-1}$.
\end{enumerate}
\end{lem}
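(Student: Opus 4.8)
Throughout, write $X=\Tsf_A\Fsf_\mbf\Tsf_\omega\Nsf^{n-r}\bTsf_B$ with $|A|=|B|=r$, $\mbf\in(\Z/d\Z)^r$, and $\omega\in\S_r$, so that only the tail $\Nsf^{n-r}\bTsf_B$ and the core $\Fsf_\mbf\Tsf_\omega$ are touched by right multiplication. In all three parts the plan is the same: transport the new right factor ($\Fsf_i$, $\Tsf_i$, or $\Nsf$) leftwards, first through $\bTsf_B$, then through $\Nsf^{n-r}$, and finally past $\Tsf_\omega$ so that it merges with $\Fsf_\mbf$, is reabsorbed into the appropriate $\bTsf$-part, or combines with $\Tsf_\omega$ inside the subalgebra generated by $\Tsf_1,\dots,\Tsf_{r-1}$. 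The key structural fact is that $\Nsf^{n-r}$ behaves as a filter: by \eqref{090}, an $\Fsf$-generator arriving from the right with index $\le n-r$ is annihilated and one with index in $[n-r+1,n]$ emerges on the left with index lowered into $[r]$; similarly, by \eqref{091}, \eqref{092} and \eqref{107}, a $\Tsf$-generator arriving with small index is absorbed up to a factor of $u$, while one with larger index emerges with index in $[r-1]$.

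For part \eqref{117}, I would commute $\Fsf_i$ to the left of $\bTsf_B$ via $\Tsf_j\Fsf_k=\Fsf_{ks_j}\Tsf_j$ (from \eqref{089}), so that its index is acted on by the permutation underlying $\bTsf_B$, then across $\Nsf^{n-r}$ by \eqref{090}, then past $\Tsf_\omega$, and finally merge it with $\Fsf_\mbf$ using \eqref{088}. No correction terms appear and the index is forced into $[r]$ (or the generator vanishes), so $X\Fsf_i$ is again of the form $\Tsf_A\Fsf_{\mbf'}\Tsf_\omega\Nsf^{n-r}\bTsf_B\in\Csf_{n,r}$.

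For part \eqref{118}, I would push $\Tsf_i$ leftwards through $\bTsf_B=\Tsf_{n-r+1,b_1}\cdots\Tsf_{n,b_r}$, processing one block $\Tsf_{j,k}$ at a time with \eqref{111} and \eqref{120}. Each step either commutes the current factor further left (with index possibly dropped by one), absorbs it via $\Tsf_{j,k}\mapsto\Tsf_{j,k-1}$ — which moves one entry of $B$, so the diagram still has $|B'|=r$ — or, in the $l=k$ case, produces $u$ times such a term plus a correction $\Fsf_k^{(d^2-d)/2}\Esf_{k,j}$ times a shorter tail. When the current factor reaches $\Nsf^{n-r}$ it is either absorbed up to a factor $u$, reabsorbed into a $\bTsf$-part after rearranging via \eqref{111} and \eqref{120}, or emerges to the right of $\Tsf_\omega$ with index in $[r-1]$, in which case the braid relation \eqref{087} and the quadratic relation \eqref{089} rewrite $\Tsf_\omega$ times that factor as a $\Kbb(u)$-combination of $\Tsf_{\omega'}$ with $\omega'\in\S_r$ plus lower terms carrying $\Esf$-factors. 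All $\Esf$- and $\Fsf$-corrections are then transported left through $\Nsf^{n-r}$ and $\Tsf_\omega$ into $\Fsf_\mbf$ as in part \eqref{117}; since $\Esf_i$ is a sum of $\Fsf$-monomials this produces a $\Kbb(u)$-linear combination, so $X\Tsf_i\in\langle\Csf_{n,r}\rangle$.

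Part \eqref{119} is the step I expect to be the main obstacle. Since $\Nsf$ corresponds to a rank-$(n-1)$ generalized rook matrix, $X\Nsf$ must have rank $r$ or $r-1$, and the work lies in realizing this reduction with the relations alone. I would slide the trailing $\Nsf$ leftwards through $\bTsf_B$: every block $\Tsf_{n-r+i,b_i}$ with $i<r$ has top index $\le n-2$ and is carried to $\Tsf_{n-r+i+1,b_i+1}$ by $\Tsf_j\Nsf=\Nsf\Tsf_{j+1}$ from \eqref{092}. The final block $\Tsf_{n,b_r}$ is the obstruction: if $b_r=n$ it is trivial, the rank drops immediately, and the output is a $\Csf_{n,r-1}$-element carrying $\Nsf^{n-(r-1)}$; if $b_r<n$, the leftmost letter $\Tsf_{n-1}$ of this block cannot be slid past $\Nsf$, so one must reorganize $\Tsf_\omega$ and the shifted $\bTsf$-part according to their coset-representative structure until the stuck $\Tsf_{n-1}$ is adjacent to a power $\Nsf^m$ with $m\ge 2$, and then $\Tsf_{n-1}\Nsf^2=u\Nsf^2$ and $\Nsf^2\Tsf_1=u\Nsf^2$ from \eqref{107}, together with \eqref{091}--\eqref{092}, collapse it and lower the rank. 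Finally the excess $\Tsf$- and $\Fsf$-factors of $\Tsf_A$ and $\Tsf_\omega$ are absorbed by the enlarged power of $\Nsf$ via $\Nsf^{i+1}\Tsf_i=u\Nsf^{i+1}$ and $\Nsf\Fsf_1=\Nsf=\Fsf_n\Nsf$, putting the result into the normal form of $\Csf_{n,r-1}$. The delicate point is precisely this rank-dropping bookkeeping, and I would organize the argument as an induction on $r$ (or on $|B|$ together with $\ell(\bTsf_B)$) reducing to the boundary cases $b_r\in\{n-1,n\}$.
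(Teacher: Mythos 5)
Your parts \eqref{117} and \eqref{118} follow essentially the paper's own route (write $\bTsf_B=\Tsf_{\tau_B}$, transport $\Fsf_i$ or $\Tsf_i$ left through $\bTsf_B$ via \eqref{111} and \eqref{120}, let $\Nsf^{n-r}$ absorb or relabel, push $\Esf$-corrections into $\Fsf_\mbf$), and are fine; if anything you are more careful than the paper in the subcase where the surviving $\Tsf_k$ must be merged with $\Tsf_\omega$.

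Part \eqref{119}, however, contains a genuine error: you have the rank bookkeeping backwards. Since $\Nsf$ corresponds to $\nu$, which has codomain $[2,n]$ and domain $[n-1]$, the product $X\Nsf$ drops rank exactly when $n\in B$ (your $b_r=n$ case, which you treat correctly), and \emph{preserves} rank when $b_r<n$. In that second case the paper computes $\bTsf_B\Nsf=\Tsf_{n-r}\cdots\Tsf_{n-1}\Nsf\,\bTsf_{B'}$ with $B'=\{b_1+1,\dots,b_r+1\}$ still of size $r$, then pads with $\Tsf_1\cdots\Tsf_{n-r-1}$ (absorbed by $\Nsf^{n-r}$ at cost $u^{-(n-r-1)}$ via \eqref{091}) so that the second relation of \eqref{092}, $\Nsf\Tsf_1\cdots\Tsf_{n-1}\Nsf=u^{n-1}\Nsf$, \emph{merges} the trailing $\Nsf$ into the leading power without increasing it, giving $X\Nsf=u^{r}\,\Tsf_A\Fsf_\mbf\Tsf_\omega\Nsf^{n-r}\bTsf_{B'}\in\langle\Csf_{n,r}\rangle$. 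Your plan instead tries to "collapse and lower the rank" using $\Tsf_{n-1}\Nsf^2=u\Nsf^2$, but there is no $\Nsf^2$ adjacent to the stuck $\Tsf_{n-1}$ (only a single trailing $\Nsf$ followed by $\bTsf_{B'}$), and forcing a rank drop here would compute the wrong element. Separately, in the genuine rank-dropping case $b_r=n$ you wave at "absorbing the excess factors of $\Tsf_A$ and $\Tsf_\omega$", but this is where most of the work lies: one must convert $\Tsf_A\Fsf_\mbf\Tsf_\omega\Nsf^{n-r+1}$ into the normal form with $|A'|=r-1$, $\mbf'\in(\Z/d\Z)^{r-1}$ and $\omega'\in\S_{r-1}$, which the paper achieves by decomposing $\Tsf_\omega$ via Lemma~\ref{121} and an induction that lets $\Nsf^{n-r+1}$ eat one block $\Tsf_{a_r,r}$ of $\Tsf_A$ and one coset representative of $\omega$; your sketch does not supply this reduction.
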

\begin{proof}
Observe that the result holds trivially for $r=0$. Let $r\in[n]$ and consider\[X=\Tsf_A\Fsf_\mbf\Tsf_\omega\Nsf^{n-r}\bTsf_B\in\Csf_{n,r}.\]

For \eqref{117}, let $B=\{b_1,\ldots,b_r\}$ and  $i\in[n]$. Consider the complement $[n]\setminus B=\{b_1',\ldots,b_{n-r}'\}$. Notice that $\bTsf_B=\Tsf_{\tau_B}$, where $\tau_B\in\S_n$ is given by $j\tau_B=b_j'$ if $j\leq n-r$, and $j\tau_B=b_{j-(n-r)}$ otherwise.

If $i\in B$, that is, $b_k=i$ for some $k\in[r]$. Then $\bTsf_B\Fsf_i=\bTsf_B\Fsf_{b_k}=\Fsf_{\tau_B(b_k)}\bTsf_B=\Fsf_{n-r+k}\bTsf_B$.

By \eqref{090} we have $\Nsf^{n-r}\Fsf_{n-r+k}=\Fsf_k\Nsf^{n-r}$. Therefore, $X\Fsf_i=\Tsf_A\Fsf_{\mbf'}\Tsf_\omega\Nsf^{n-r}\bTsf_B\in\Csf_{n,r}$, where $\mbf'=(m_1,\dots,m_{j-1},m_j+1,m_{j+1},\dots,m_r)$ and $j=\omega k$.

If $i\not\in B$, then $\tau_B(i)\leq n-r$, and so $\bTsf_B\Fsf_i=\Fsf_j\bTsf_B$ for some $j\leq n-r$. By applying (\ref{090}), we get $\Nsf^{n-r}\Fsf_j=\Nsf^{n-r}$ and hence $X\Fsf_i=X$.

For \eqref{118}, let be $i\in[n-1]$. First suppose $i,i+1\not\in B$. Then, there exists $k\in[r]$ such that $b_k<i$ and $b_{k+1}>i+1$. Thus, we can write $\bTsf_B=YZ$, where $Y=\Tsf_{n-r+1,b_1}\cdots\Tsf_{j,b_k}$ and $Z=\Tsf_{j+1,b_{k+1}}\cdots\Tsf_{n,b_r}$, with $j=n-r+k$. Applying \eqref{111} we obtain $\bTsf_B\Tsf_i=YZ\Tsf_i=\Tsf_{i-k}\bTsf_B$. Hence,
\[X\Tsf_i=\Tsf_A\Fsf_{\mbf}\Tsf_\omega\Nsf^{n-r}\Tsf_{i-k}\bTsf_B=u\Tsf_A\Fsf_1^{m_1}\cdots\Fsf_r^{m_r}\Tsf_\omega\Nsf^{n-r}\bTsf_B.\]
Now suppose that $b_k=i+1\in B$ and $i\not\in B$. Then, $\bTsf_B=Y\Tsf_{j,i+1}Z$, where $Z=\Tsf_{j+1,b_{k+1}}\cdots\Tsf_{n,b_r}$ and $Y=\Tsf_{n-r+1,b_1}\cdots\Tsf_{j-1,b_{k-1}}$, with $j=n-r+k$. Thus, $\bTsf_B\Tsf_i=Y\Tsf_{j,i+1}\Tsf_iZ=Y\Tsf_{j,i}Z=\bTsf_{B'}$, where $B'=\{b_1,\ldots,b_{k-1},b_k-1,b_{k+1}\dots,b_r\}$. Therefore $X\Tsf_i=\Tsf_A\Fsf_{\mbf}\Tsf_\omega\Nsf^{n-r}\bTsf_{B'}$.

If $i\in B$ and $i+1\not\in B$, let $b_k=i\in B$. Since $i+1\not\in B$, we have $b_{k+1}>i+1$. Thus,\[\bTsf_B\Tsf_i=Y\Tsf_{j,i}Z\Tsf_i
=Y(u\Tsf_{j,i+1}+(u-1)\Fsf_i^{\frac{d^2-d}{2}}\Esf_{i,j}\Tsf_{j, i})Z=uY\Tsf_{j,i+1}Z+(u-1)Y\Fsf_i^{\frac{d^2-d}{2}}\Esf_{i,j}\Tsf_{j, i}Z.\]
Applying (\ref{090}), for $B'=\{b_1,\ldots,b_{k-1},b_k+1,b_{k+1}\dots,b_r\}$ and some $\mbf_s\in(\Z/d\Z)^r$, we get\[X\Tsf_i=u\Tsf_A\Fsf_\mbf\Tsf_\omega\Nsf^{n-r}\bTsf_{B'}+\frac{u-1}{d}\sum_{s=0}^{d-1}\Tsf_A\Fsf_{\mbf_s}\Tsf_\omega\Nsf^{n-r}\bTsf_B.\]

If $i,i+1\in B$, let $k\in[r-1]$ such that $b_k=i$ and $b_{k+1}=i+1$. Then $\bTsf_B=Y\Tsf_{j,i}\Tsf_{j+1),i+1}Z$, where $Z=\Tsf_{j+2,b_{m+2}}\cdots\Tsf_{n,b_r}$ and $Y=\Tsf_{n-r+1,b_1}\cdots\Tsf_{j-1,b_{m-1}}$, with $j=n-r+k$. Therefore, by \eqref{120}, we get\[\bTsf_BT_i=Y\Tsf_{j,i}\Tsf_{j+1,i+1}\Tsf_iZ=Y\Tsf_j\Tsf_{j,i}\Tsf_{j+1,i+1}\Tsf_iZ=\Tsf_j\bTsf_B.\]Hence,\[X\Tsf_i=\Tsf_A\Fsf_\mbf\Tsf_\omega\Nsf^{n-r}\bTsf_B\Tsf_i
=\Tsf_A\Fsf_\mbf\Tsf_\omega\Nsf^{n-r}\Tsf_j\bTsf_B
=\Tsf_A\Fsf_\mbf\Tsf_\omega\Tsf_k\Nsf^{n-r}\bTsf_B
=\Tsf_A\Fsf_\mbf\Tsf_\tau\Nsf^{n-r}\bTsf_B,\]where $\tau=\omega s_k\in\S_r$, since $k\in[r-1]$.

For \eqref{119}, suppose first that $n\not\in B$. Then,
\[\begin{array}{rcl}
\bTsf_B\Nsf&=&\Tsf_{n-r+1,b_1}\cdots\Tsf_{n-1,b_{r-1}}\Tsf_{n,b_r}\Nsf\\
&=&\Tsf_{n-r+1,b_1}\cdots\Tsf_{n-2}\Tsf_{n-3}\cdots\Tsf_{b_{r-1}}\Tsf_{n-1}\Tsf_{n-2}\cdots\Tsf_{b_r}\Nsf\\
&\stackrel{(\ref{091})}{=}&\Tsf_{n-r+1,b_1}\cdots\Tsf_{n-2}\Tsf_{n-3}\cdots \Tsf_{b_{r-1}}\Tsf_{n-1}\Nsf\Tsf_{n-1}\cdots\Tsf_{b_r+1}\\
&=&\Tsf_{n-r+1,b_1}\cdots\Tsf_{n-2}\Tsf_{n-1}\Tsf_{n-3}\cdots\Tsf_{b_{r-1}} \Nsf\Tsf_{n,b_r+1}\\
&=&\Tsf_{n-r+1,b_1}\cdots\Tsf_{n-2}\Tsf_{n-1}\Nsf\Tsf_{n-1,b_{r-1}+1}\Tsf_{n,b_r+1}.
\end{array}\]
Repeating this argument yields $\bTsf_B\Nsf=\Tsf_{n-r}\cdots\Tsf_{n-1}\Nsf \bTsf_{B'}$, where $B'=\{b_1+1,\dots, b_r+1\}$. Hence,
\[\begin{array}{rclll}
X\Nsf&=&\Tsf_A\Fsf_\mbf\Tsf_\omega\Nsf^{n-r}\bTsf_B\Nsf\\
&=&\Tsf_A\Fsf_\mbf\Tsf_\omega\Nsf^{n-r}\Tsf_{n-r}\cdots\Tsf_{n-1}\Nsf\bTsf_{B'}\\
&=&u^{1-n+r}\Tsf_A\Fsf_\mbf\Tsf_\omega\Nsf^{n-r}\Tsf_1\cdots\Tsf_{n-r-1}\Tsf_{n-r}\cdots\Tsf_{n-1}\Nsf\bTsf_{B'}&=&u^r\Tsf_A\Fsf_\mbf\Tsf_\omega\Nsf^{n-r}\bTsf_{B'}.
\end{array}\]
Suppose finally that $b_r=n$. Then,
\[\begin{array}{rclll}
\bTsf_B\Nsf&=&\Tsf_{n-r+1,b_1}\cdots\Tsf_{n-1,b_{r-1}}\Tsf_{n,n}\Nsf
&=&\Tsf_{n-r+1,b_1}\cdots\Tsf_{n-1,b_{r-1}}\Nsf\\
&&&=&\Nsf\Tsf_{n-r+2,b_1+1}\cdots\Tsf_{n,b_{r-1}+1}=\Nsf\bTsf_{B'}
\end{array}\]
where $B'=\{b_1+1,\ldots,b_{r-1}+1\}$ and $|B'|=r-1$. Hence,\[X\Nsf=\Tsf_A\Fsf_\mbf\Tsf_\omega\Nsf^{n-r}\bTsf_B\Nsf\\
=\Tsf_A\Fsf_\mbf\Tsf_\omega\Nsf^{n-r+1}\bTsf_{B'}.\]Let $\rho:\S_r\to\S_{r+1}$ be the homomorphism defined by $s_k\mapsto s_{k+1}$ for all $k\in[r-1]$. For simplicity, write $\rho(\tau)$ as $\nu\tau$, and for $\mbf=(m_1,\dots,m_n)\in(\Z/d\Z)^r$, write $\Fsf_2^{m_1}\cdots\Fsf_{n+1}^{m_n}$ as $\Fsf_{\nu\mbf}$. Suppose $\Tsf_\omega\in\S_{r-1}$, then
\[\begin{array}{rcl}\Tsf_A\Fsf_\mbf\Tsf_\omega\Nsf^{n-r+1}&=&\Tsf_{a_1,1}\cdots\Tsf_{a_{r-1},r-1}\Tsf_{a_{r},r}\Fsf_\mbf\Nsf^{n-r+1}\Tsf_{\nu\omega}\\
&=&\Tsf_{a_1,1}\cdots\Tsf_{a_{r-1},r-1}\Tsf_{a_{r},r}\Nsf^{n-r+1}\Fsf_{\nu\mbf}\Tsf_{\nu\omega}\\
&=&u^{a_r-r}\Tsf_{a_1,1}\cdots\Tsf_{a_{r-1},r-1}\Nsf^{n-r+1}\Fsf_{\nu\mbf}\Tsf_{\nu\omega}\\
&=&u^{a_r-r}\Tsf_{a_1,1}\cdots\Tsf_{a_{r-1},r-1}\Fsf_{\mbf\setminus\{r\}}\Nsf^{n-r+1}\Tsf_{\nu\omega}\\
&=&u^{a_r-r}\Tsf_{a_1,1}\cdots\Tsf_{a_{r-1},r-1}\Fsf_{\mbf\setminus\{r\}}\Tsf_\omega\Nsf^{n-r+1},\end{array}\]
where $\mbf\setminus\{r\}=(m_1,\ldots,m_{r-1})\in(\Z/d\Z)^{r-1}$.

Now suppose $\Tsf_\omega=\Tsf_x\Tsf_{r-1}\Tsf_y$ with $x,y\in\S_{r-1}$. Then,\[\begin{array}{rcl}\Tsf_A\Fsf_\mbf\Tsf_\omega\Nsf^{n-r+1}&=&\Tsf_{a_1,1}\cdots\Tsf_{a_{r-1},r-1}\Tsf_{a_r,r}\Fsf_\mbf\Tsf_x\Tsf_{r-1}\Tsf_y\Nsf^{n-r+1}\\
&=&\Tsf_{a_1,1}\cdots\Tsf_{a_{r-1},r-1}\Tsf_{a_r,r}\Fsf_\mbf\Tsf_x\Tsf_{r-1}\Nsf^{n-r+1}\Tsf_{\nu y}\\
&=&\Tsf_{a_1,1}\cdots\Tsf_{a_{r-1},r-1}\Tsf_x\Tsf_{a_r,r}\Tsf_{r-1} \Fsf_{\tau\mbf}\Nsf^{n-r+1}\Tsf_{\nu y}\\
&=&\Tsf_{a_1,1}\cdots\Tsf_{a_{r-1},r-1}\Tsf_x\Tsf_{a_r,r-1}\Fsf_{\tau\mbf}\Nsf^{n-r+1}\Tsf_{\nu y},\end{array}\]where $\tau=xs_{r-1}$. Note that if $\Tsf_x\in\S_{r-2}$, we get\[\begin{array}{rcl}
\Tsf_A\Fsf_\mbf\Tsf_\omega\Nsf^{n-r+1}&=&\Tsf_{a_1,1}\cdots\Tsf_{a_{r-1},r-1}\Tsf_x\Tsf_{a_r,r-1}\Fsf_{\tau\mbf}\Nsf^{n-r+1}\Tsf_{\nu y}\\
&=&\Tsf_{a_1,1}\cdots\Tsf_{a_{r-1},r-1}\Nsf^{n-r+1}\Tsf_{\nu x}\Tsf_{a_r+1,r} \Fsf_{\nu\tau\mbf}\Tsf_{\nu y}\\
&=&u^{a_{r-1}-(r-1)}\Tsf_{a_1,1}\cdots\Tsf_{a_{r-2},r-2}\Nsf^{n-r+1}\Tsf_{\nu x}\Tsf_{a_r+1,r}\Fsf_{\nu\tau\mbf}\Tsf_{\nu y}\\
&=&u^{a_{r-1}-(r-1)}\Tsf_{a_1,1}\cdots\Tsf_{a_{r-2},r-2}\Tsf_x\Tsf_{a_r,r-1} \Fsf_{\tau\mbf}\Nsf^{n-r+1}\Tsf_{\nu y}\\
&=&u^{a_{r-1}-(r-1)}\Tsf_{a_1,1}\cdots\Tsf_{a_{r-2},r-2}\Tsf_{a_r,r-1} \Fsf_{s_{r-1}\mbf}\Nsf^{n-r+1}\Tsf_{\nu x}\Tsf_{\nu y}\\
&=&u^{a_{r-1}-(r-1)}\Tsf_{a_1,1}\cdots\Tsf_{a_{r-2},r-2}\Tsf_{a_r,r-1}\Fsf_{m'}\Nsf^{n-r+1}\Tsf_{\nu x}\Tsf_{\nu y}\\
&=&u^{a_{r-1}-(r-1)}\Tsf_{A'}\Fsf_{m'}\Tsf_{\omega'}\Nsf^{n-r+1}
\end{array}\]where $A'=A\setminus\{a_{r-1}\}$, $\mbf=(m_1,\ldots,m_{n-2},m_n)\in (\Z/d\Z)^{r-1}$ and $\omega'=xy\in\S_{r-1}$.

Applying Lemma~\ref{122} and iterating the previous approach, we obtain\[X\Nsf=\sum_{k=0}^s\Tsf_{A_k}\Fsf_{\mbf_k}\Tsf_{\omega_k}\Nsf^{n-r+1}\bTsf_{B'}\]
with $|A_k|=r-1$, $\mbf_k\in(\Z/d\Z)^{r-1}$ and $\omega_k\in\S_{r-1}$ for all $k$.
\end{proof}

\begin{proof}[Proof of Theorem~\ref{123}]
Since $1\in\Csf_n$, by applying Lemma~\ref{122} it follows that any word formed by the generators of the algebra $\RY_{d,n}(u)$ lies in $\langle\Csf_n\rangle$. Consequently, this implies that $\langle\Csf_n\rangle=\RY_{d,n}(u)$.
\end{proof}

\subsection{Representation on tensors}\label{161}

In the sequel, we set $\Sbb=\Kbb(u^{\frac{1}{2}})$. For each $s\in[d]$, let  $V_s$ be the free $\Sbb$-module spanned by $\{v_i^s\mid i\in[n]\}$. Let $V=\bigoplus_{s\in [d]}V_s$ and $U=\Sbb\oplus V$. In particular, if $v_0$ denotes the identity of $\Sbb$, then $\{v_i^s\}\cup\{v_0\}$ is a $\Sbb$-basis of $U$.

Now, we define three operators $\Fbf\in\End_\Sbb(U)$, $\Ebf\in\End_\Sbb(U^{\otimes 2})$ and $\Tbf\in\End_\Sbb(U^{\otimes2})$, as follows:
\begin{equation}\label{098}
\Fbf(v_i^s)=\xi^sv_i^s,\qquad\Fbf(v_0)=v_0,
\end{equation}
\begin{equation}\label{099}
\Ebf(x\otimes y)=\begin{cases}
v_i^t\otimes v_j^s&\text{if }x=v_i^s,\;y=v_j^t\text{ and }s=t\\
v_0\otimes v_0&\text{if }x=v_0\text{ and }y=v_0\\
0&\text{otherwise},
\end{cases}
\end{equation}and
\begin{equation}\label{101}
\Tbf(x\otimes y)=\begin{cases}
u^{\frac{1}{2}}\cdot v_j^t\otimes v_i^s&\text{if }x=v_i^s,\;y=v_j^t\text{ and }s\neq t\\
(-1)^{(d-1)s}u\cdot v_i^s\otimes v_j^t&\text{if }x=v_i^s,\;y=v_j^t,\;s=t,\;i=j\\
(-1)^{(d-1)s}u^{\frac{1}{2}}\cdot v_j^t\otimes v_i^s&\text{if }x=v_i^s,\;y=v_j^t,\;s=t,\;i>j\\
(-1)^{(d-1)s}(u-1)\cdot v_i^s\otimes v_j^t+(-1)^{(d-1)s}u^{\frac{1}{2}}\cdot v_j^t\otimes v_i^s &\text{if }x=v_i^s,\;y=v_j^s,\;s=t,\;i<j\\
u\cdot v_0\otimes v_0&\text{if }x=v_0,\;y=v_0\\
u^{\frac{1}{2}}\cdot y\otimes x&\text{if }x=v_0,\;y\in V\text{ or }x\in V,\;y=v_0
\end{cases}
\end{equation}where $\xi$ is a primitive $d$th root of unity.

We extend them to operators $\Fbf_i$, $\Ebf_i$ and $\Tbf_i$ acting on the tensor space $U^{\otimes n}$ by letting $\Fbf$ act in the $i$th factor, $\Ebf$ in the $i$th and $(i+1)$st factors and $\Tbf$ in the $i$th and $(i+1)$st factors, respectively.

It was shown in \cite[Lemma 6]{EsRyH18} that\[\Ebf_i=\dfrac{1}{d}\sum_{k=0}^{d-1}\Fbf_i^k\Fbf_{i+1}^{-k}.\]

In order to define the operator $\Nbf$ associated with the generator $\Nsf$ of $\RY_{d,n}(u)$, for $x=x_1\otimes\cdots\otimes x_n\in U^{\otimes n}$, we set $\varepsilon(x)=|\{k\mid x_k \in V\}|$, which counts the number of non-scalar components in the tensor.

The operator $\Nbf$ on $U^{\otimes n}$ is then defined by\[\Nbf(x_1\otimes\cdots\otimes x_n)=u^{\frac{\varepsilon(x)}{2}}x_2\otimes\cdots\otimes x_n\otimes x_1^0,\]where $x_1^0=v_0$ whenever $x_1\in\Sbb$, and $x_1^0=0$ whenever $x_1\in V$.

\begin{thm}\label{151}
There is a representation $\rho$ of $\RY_{d,n}(u)$ in $U^{\otimes n}$ given by $\Fsf_i\mapsto\Fbf_i$, $\Tsf_i\mapsto\Tbf_i$ and $\Nsf\mapsto\Nbf$.
\end{thm}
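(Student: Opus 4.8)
The plan is to verify that the operators $\Fbf_i$, $\Tbf_i$ and $\Nbf$ satisfy each of the defining relations \eqref{087}--\eqref{092} of $\RY_{d,n}(u)$; since those relations present the algebra, this suffices. Several relations are immediate. Relations \eqref{088} hold because each $\Fbf_i$ is diagonal on the basis $\{v_j^s\}\cup\{v_0\}$ with eigenvalues among the $d$th roots of unity (namely $\xi^s$ and $1$), so the $\Fbf_i$ pairwise commute and satisfy $\Fbf_i^d=\id$. The far-away relations $\Tbf_i\Tbf_j=\Tbf_j\Tbf_i$ for $|i-j|>1$ and $\Tbf_i\Fbf_j=\Fbf_{js_i}\Tbf_i$ for $j\notin\{i,i+1\}$ hold because the operators involved act on disjoint tensor factors. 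For the remaining part of \eqref{089}, namely $\Tbf_i\Fbf_j=\Fbf_{js_i}\Tbf_i$ with $j\in\{i,i+1\}$, I would reduce to $U^{\otimes2}$: by \eqref{101}, $\Tbf$ sends each simple tensor to a linear combination of that tensor and its transpose (including the cases where one entry is $v_0$), so comparing the action of $\Fbf$ on the relevant slot of each term on both sides settles the finitely many cases.

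The substantive local relations are the braid relation in \eqref{087} for $|i-j|=1$ and the quadratic relation in \eqref{089}; both are local and reduce, respectively, to operator identities on $U^{\otimes3}$ (three consecutive slots) and on $U^{\otimes2}$, which I would check on simple tensors, organizing the cases by how many of the relevant slots lie in $V$ and how many equal $v_0$. When all relevant slots lie in $V$, these identities are precisely those established for the tensor representation of the Yokonuma--Hecke algebra in \cite{EsRyH18}; the bookkeeping point is that $\Fbf_i^{\frac{d^2-d}{2}}$ multiplies $v_a^s$ by the scalar $\xi^{s(d^2-d)/2}=(-1)^{(d-1)s}$ (equal to $1$ for $d$ odd and to $(-1)^s$ for $d$ even), which is exactly the sign appearing in \eqref{101}, and, together with the identity $\Ebf_i=\frac{1}{d}\sum_{k=0}^{d-1}\Fbf_i^k\Fbf_{i+1}^{-k}$ recalled before the statement, this makes the right-hand side of \eqref{089} match $\Tbf_i^2$ term by term. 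When all relevant slots equal $v_0$, $\Tbf_i$ acts as multiplication by $u$, so \eqref{087} and \eqref{089} are trivial there. In the mixed cases, $\Tbf$ simply transposes a $v_0$ past a $V$-vector with a factor $u^{1/2}$, compatibly with the braiding of the $V$-vectors among themselves, and the remaining finitely many instances of \eqref{087} and \eqref{089} follow directly from \eqref{101}.

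Next I would treat the relations involving $\Nbf$, using the explicit formula $\Nbf(x_1\otimes\cdots\otimes x_n)=u^{\varepsilon(x)/2}\,x_2\otimes\cdots\otimes x_n\otimes x_1^0$ and the resulting identity that $\Nbf^k(x)=0$ unless $x_1,\dots,x_k\in\Sbb$, in which case $\Nbf^k(x)=u^{k\varepsilon(x)/2}\,x_{k+1}\otimes\cdots\otimes x_n\otimes v_0^{\otimes k}$. Relations \eqref{090} follow by comparing the slot on which $\Fbf$ acts before and after the cyclic shift, using $\Fbf(v_0)=v_0$ and that $\Fbf$ preserves $\varepsilon$. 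For \eqref{091} it suffices, by \eqref{107} (as observed in the text), to treat $\Nbf^2\Tbf_1=u\Nbf^2$ and $\Tbf_{n-1}\Nbf^2=u\Nbf^2$, and more generally $\Tbf_i\Nbf^{n-i+1}=u\Nbf^{n-i+1}$ and $\Nbf^{i+1}\Tbf_i=u\Nbf^{i+1}$: after applying $\Nbf^{n-i+1}$ the slots $i$ and $i+1$ both equal $v_0$, so $\Tbf_i$ multiplies by $u$; and since $\Tbf_i$ sends any simple tensor having a $V$-entry among slots $1,\dots,i+1$ to a combination of such tensors, the equality $\Nbf^{i+1}\Tbf_i(x)=u\Nbf^{i+1}(x)$ holds when $x_1,\dots,x_{i+1}\in\Sbb$ (where again $\Tbf_i$ contributes the scalar $u$) and both sides vanish otherwise. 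The relation $\Tbf_i\Nbf=\Nbf\Tbf_{i+1}$ is a direct comparison of the two sides after the shift. Finally, for $\Nbf\Tbf_1\cdots\Tbf_{n-1}\Nbf=u^{n-1}\Nbf$, the inner $\Nbf$ first produces a tensor with $v_0$ in its last slot, and then $\Tbf_{n-1},\Tbf_{n-2},\dots,\Tbf_1$ successively carry this $v_0$ to the first slot, each step contributing $u^{1/2}$ when the entry it passes lies in $V$ and $u$ when it equals $v_0$, for an accumulated scalar $u^{(n-1)-\varepsilon(x)/2}$; combining this with the two factors $u^{\varepsilon(x)/2}$ coming from the two outer $\Nbf$'s yields exactly $u^{n-1}\Nbf(x)$ (both sides vanishing when $x_1\in V$).

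The main obstacle will be the second step: the braid relation on $U^{\otimes3}$ in the mixed cases requires a finite but lengthy case analysis, and throughout the argument one must keep precise track of the half-integer powers of $u$ (the reason the representation is defined over $\Sbb=\Kbb(u^{1/2})$) and of the signs $(-1)^{(d-1)s}$ forced by $\Fbf_i^{\frac{d^2-d}{2}}$; once this bookkeeping is in place, each individual case is routine. Faithfulness of $\rho$ then follows by combining it with Theorem~\ref{123} and a linear-independence check on the images $\rho(X)$, $X\in\Csf_n$.
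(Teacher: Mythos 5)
Your proposal is correct and follows essentially the same strategy as the paper: reduce each defining relation to a local computation on $U^{\otimes2}$ or $U^{\otimes3}$, organize the cases by which slots lie in $V$ versus equal $v_0$, use the sign $\xi^{s(d^2-d)/2}=(-1)^{(d-1)s}$ coming from $\Fbf_i^{\frac{d^2-d}{2}}$ to match the quadratic relation, and handle the $\Nbf$-relations via the cyclic-shift formula. The only difference is one of packaging: where you verify the second relation of \eqref{089} and the relations \eqref{090}--\eqref{092} by direct computation, the paper delegates these to \cite[Theorem 8]{EsRyH18} and \cite[Lemma 3.8]{So04} respectively, observing that $\Tbf$ has the required "flip plus diagonal" form on pairs; the underlying calculations are the same.
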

\begin{proof}
It suffices to show that the operators defined above satisfy the defining relations in \eqref{087} to \eqref{092}. We first observe that relations in \eqref{088} follow immediately from the commutativity of the operators $\Fbf_i$. On the other hand, the second relation in \eqref{089} is verified exactly as in the proof of \cite[Theorem 8]{EsRyH18}, upon identifying our operators $\Tbf$ and $\Fbf$ with $\Gbf$ and $\Tbf$, respectively.

Similarly for relations \eqref{090} to \eqref{092}, involving $\Tbf$ and $\Nbf$, the proof is the same as in \cite[Lemma 3.8]{So04}. Indeed, the argument depends only on the structure of $\Nbf$, which is the same as the operator $N$ in \cite[Equation (3.6)]{So04}, and on the property that $\Tbf$ maps $v_i^s\otimes v_j^t$ into the $\Sbb$-submodule spanned by $v_i^s\otimes v_j^t$ and $v_j^t\otimes v_i^s$, that is, $\Tbf(v_i^s\otimes v_j^t)=\alpha\,v_i^s\otimes v_j^t+\beta\,v_j^t\otimes v_i^s$ for suitable scalars $\alpha,\beta$. This is exactly the case for our $\Tbf$, so the proofs of relations \eqref{090} to \eqref{092} go through unchanged.

To verify the quadratic relation in \eqref{087} it suffices to consider a tensor $x\in U\otimes U$. We split into cases:

If $x=v_0\otimes v_0$, it follows by\[\Tbf^2(v_0\otimes v_0)=u^2(v_0\otimes v_0)=(v_0\otimes v_0)u+(u^2-u)(v_0\otimes v_0)=(v_0\otimes v_0)u+(u-1)\Fbf^{\frac{d^2-d}{2}}\Ebf\Tbf(v_0\otimes v_0).\]

If $x=v_i^s\otimes v_j^s$ and $d$ even. Whenever $s$ is even, the quadratic relation reduces to the classical rook case, and our operator $\Tbf$ coincides with the operator $T$ in \cite[Equation (3.2)]{So04}. Hence it holds. Whenever $s$ is odd, the quadratic relation becomes $\Tbf^2(v_i^s\otimes v_j^s)=(v_i^s\otimes v_j^s)u-(u-1)\Tbf(v_i^s\otimes v_j^s)$. Furthermore, in this case the operator $\Tbf$ coincides with $-T$. On the other hand, if $d$ is odd, then $\Tbf$ again coincides with the operator $T$, so the equality holds in this case as well.

If $x=v_i^s\otimes v_j^t$ with $s\neq t$, or $x=v_0\otimes v_i^s$, or $x=v_i^s\otimes v_0$, then $\Tbf^2(x)=ux$, and $\Ebf$ vanishes on them, since $\Tbf$ acts as $u^{\frac{1}{2}}$ times the flip. Consequently $\Tbf^2(x)=ux=ux+(u-1)\Fbf^{\frac{d^2-d}{2}}\Ebf\Tbf(x)$.

We now verify the relations in \eqref{087}.

First, we verify the commutation relation for distant generators; it suffices to work on $U^{\otimes4}$. In this setting $\Tbf_i$ and $\Tbf_j$ act on disjoint tensor factors, hence they commute, and the identity $\Tbf_i\Tbf_j=\Tbf_j\Tbf_i$ follows immediately.

To verify the relation $\Tbf_i\Tbf_{i+1}\Tbf_i=\Tbf_{i+1}\Tbf_i\Tbf_{i+1}$, it suffices to work on $U^{\otimes 3}$. We split into cases:

If the upper indices are all distinct, more generally, whenever a pair has distinct upper indices or involves a scalar, each $\Tbf$ on its adjacent pair acts as $q^{\frac{1}{2}}$ times the flip. Both sides then implement the same permutation of the three tensor factors, multiplied by the same overall scalar, so the relation holds immediately.

If the upper indices are all equal, say $s$, then on $V_s^{\otimes 3}$ we have $\Tbf_k=(-1)^{(d-1)s}T_k$ for $k=i,i+1$, where $T$ is the classical rook operator. Hence both sides are $(-1)^{3(d-1)s}$ times the classical rook relation and the sign cancels on both sides. Similarly, for the case $v_0\otimes v_0\otimes v_0$ the relation follows from the classical rook case.

For the mixed pattern with two equal upper indices and one different, by symmetry it suffices to consider a representative vector $x=u_i\otimes u_j\otimes v$, where each $u_\ell$ denotes either $v_\ell^{\,s}$ (for a fixed $s$) or $v_0$, and $v$ denotes either $v_k^{\,t}$ with $t\neq s$ or $v_0$, as appropriate. In all these cases the operator $\Tbf$ acts on the first two factors by $\Tbf(w_i\otimes w_j)=\alpha w_i\otimes w_j+\beta w_j\otimes w_i$ with $\alpha,\beta\in\Sbb$. A direct use of the defining rules of $\Tbf$ then gives\[\Tbf_1\Tbf_2\Tbf_1(x)=u\alpha v\otimes w_i\otimes w_j+u\beta v\otimes w_j\otimes w_i=
\Tbf_2\Tbf_1\Tbf_2(x).\]Therefore the relation holds in this case as well. By symmetry, the other placements of the distinct upper index are analogous, so the result follows.
\end{proof}

We now use this tensor representation to prove that the spanning set
$\Csf_n$ of $\RY_{d,n}(u)$ in Theorem~\ref{123} is in fact a basis. For this purpose we introduce the notion of support for tensors in $U^{\otimes n}$ and establish a few elementary properties.

We define the \emph{support} of a tensor $x=x_1\otimes\cdots\otimes x_n\in U^{\otimes n}$ by $\operatorname{supp}(x)=\{k\in[n]\mid x_k\in V\}$. If $\supp(x)=A$, we say that $x$ \emph{has support} $A$. For instance, $v_0\otimes v_2^1\otimes v_0\otimes v_3^0\otimes v_0\otimes v_1^2 \otimes v_0$ has support $\{2,4,6\}$.

From now on, for $A,B\subseteq[n]$ and $\omega\in\S_n$, we write $\Tbf_A$, $\overline{\Tbf}_B$ and $\Tbf_\omega$ to denote the respective images of $\Tsf_A$, $\bTsf_B$ and $\Tsf_\omega$ in Subsection~\ref{007}, under the tensor representation in \eqref{101}.

\begin{lem}\label{150}
Let $r\in[n]$, and let $B=\{b_1<\cdots<b_r\}\subseteq[n]$. Then:
\begin{enumerate}
\item If $x\in U^{\otimes n}$ has support $B$, then $\overline{\Tbf}_B(x)$ is nonzero and has support $[n-r+1,n]$.\label{012}
\item If $y\in U^{\otimes n}$ has support of size $r$, then $\Nbf^{n-r}(y)\neq 0$ if and only if $\supp(y)=[n-r+1,n]$. Moreover, in this case $\Nbf^{n-r}(y)$ has support $[r]$.\label{014}
\item If $z\in U^{\otimes n}$ has support $[r]$ and its non scalar
factors have strictly decreasing lower indices, then for every
$\omega\in\S_r$ the tensor $\Tbf_\omega(z)$ is nonzero, and
$\Tbf_\omega$ acts on $z$ exactly as the permutation $\omega$ acts
on tensors: the $r$ non scalar factors of $z$ are permuted according to $\omega$,
up to nonzero scalar multiples.\label{064}
\end{enumerate}
\end{lem}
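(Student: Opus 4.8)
The plan is to prove the three statements in order, each by unwinding the explicit definitions of the operators $\overline{\Tbf}_B$, $\Nbf$ and $\Tbf_\omega$ given in \eqref{101} and in the definition of $\Nbf$, and tracking how the support set evolves. The guiding principle throughout is that, on any pair of tensor factors whose upper indices differ, or where at least one factor is the scalar $v_0$, the operator $\Tbf$ acts as $u^{1/2}$ times the flip; this is the last three lines of \eqref{101}. Hence on tensors whose non-scalar components all carry distinct colours (which is automatically the case in parts \eqref{012} and \eqref{014}, where the colours never need to coincide), each $\Tbf_i$ is, up to a nonzero scalar, simply the transposition of the $i$th and $(i{+}1)$st factors. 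The operators $\Tbf_A$, $\overline{\Tbf}_B$, $\Tbf_\omega$ are products of such $\Tbf_i$'s, so under this hypothesis they realise the corresponding (partial) permutation of the tensor factors, scaled by a nonzero power of $u^{1/2}$. For \eqref{012}: writing $\overline{\Tbf}_B=\Tbf_{n-r+1,b_1}\cdots\Tbf_{n,b_r}$ and recalling $\Tbf_{k,j}=\Tbf_{k-1}\cdots\Tbf_j$, one checks that this product moves the factor in position $b_1$ to position $n-r+1$, the factor in position $b_2$ to position $n-r+2$, and so on, while shifting the intermediate scalar factors leftward; since the moved factors are precisely the $r$ non-scalar ones, the image is nonzero (no colour collisions arise because the factors being flipped past one another are scalars) and its support is exactly $\{n-r+1,\dots,n\}$. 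A short induction on $r$, peeling off the last factor $\Tbf_{n,b_r}$, makes this rigorous.

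For part \eqref{014}: from the definition $\Nbf(x_1\otimes\cdots\otimes x_n)=u^{\varepsilon(x)/2}\,x_2\otimes\cdots\otimes x_n\otimes x_1^0$, the first factor is cyclically sent to the last position and replaced by $v_0$ if it was scalar, and annihilated (giving $0$) if it was non-scalar. Iterating $\Nbf$ a total of $n-r$ times, the result is nonzero precisely when the first $n-r$ factors encountered are all scalars, i.e.\ when $\supp(y)\subseteq[n-r+1,n]$; since $|\supp(y)|=r$ this forces $\supp(y)=[n-r+1,n]$. Conversely, if $\supp(y)=[n-r+1,n]$, the $n-r$ applications of $\Nbf$ successively strip off the $n-r$ leading scalar factors and append fresh copies of $v_0$ at the end, leaving the $r$ non-scalar factors of $y$ cyclically shifted into positions $[r]$; each step contributes a nonzero scalar $u^{r/2}$ (since $\varepsilon$ stays equal to $r$ throughout, as only scalars are removed), so $\Nbf^{n-r}(y)\neq0$ and has support $[r]$.

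For part \eqref{064}: here colours may coincide, so $\Tbf_i$ is no longer a pure flip, and this is the one place requiring care — hence the strictly decreasing lower-index hypothesis. The point is that, by the four colour-respecting cases of \eqref{101}, on a pair $v_i^s\otimes v_j^t$ the operator $\Tbf$ always sends it into the span of $v_i^s\otimes v_j^t$ and $v_j^t\otimes v_i^s$, with the coefficient of the flipped term $v_j^t\otimes v_i^s$ always a nonzero multiple of $u^{1/2}$ (it equals $u^{1/2}$ when $s\neq t$, $(-1)^{(d-1)s}u^{1/2}$ when $s=t$ and $i>j$, and again $(-1)^{(d-1)s}u^{1/2}$ when $s=t$ and $i<j$; the case $i=j$, $s=t$ does not occur since lower indices are strictly decreasing hence distinct). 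I would prove by induction on $\ell(\omega)$, using a reduced expression $\omega=s_{i_1}\cdots s_{i_l}$, that $\Tbf_\omega(z)$ equals a nonzero scalar times the tensor obtained by permuting the $r$ factors of $z$ by $\omega$, plus a linear combination of tensors obtained from $z$ by permuting its factors via strictly shorter permutations. Applying one generator $\Tbf_{i_k}$ at a time: on the "leading" term it either produces a pure flip (advancing the permutation, still with distinct lower indices in each acted pair because $z$'s are distinct and permutations preserve the multiset of entries) or, in the $i<j$ sub-case, produces the flip plus a non-flipped "lower" term; in all cases the leading coefficient stays a nonzero power of $u^{1/2}$ times a sign. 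A triangularity argument — ordering permutations by length — then shows the coefficient of the genuinely $\omega$-permuted tensor is nonzero, so $\Tbf_\omega(z)\neq0$ and acts as claimed. The main obstacle is precisely this last step: controlling the lower-order correction terms produced by the $i<j$ branch of the quadratic operator and verifying that they never cancel the leading term. I expect this to follow cleanly from the length-triangularity of the Hecke-type action, exactly as in the classical rook-monoid tensor representation of \cite[Section 3]{So04}, to which the colour-diagonal blocks of our operators reduce up to the sign $(-1)^{(d-1)s}$.
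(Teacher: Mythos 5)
Your arguments for parts \eqref{012} and \eqref{014} coincide with the paper's: in $\overline{\Tbf}_B$ every flip involves at least one scalar factor, so it is a pure flip up to $u^{1/2}$, and $\Nbf$ rotates leftward, annihilating the tensor exactly when a non-scalar factor reaches the first position. Those two parts are fine.

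Part \eqref{064} is where there is a genuine gap. You read the hypothesis ``strictly decreasing lower indices'' as serving only to rule out the diagonal case $i=j$, $s=t$, and you then accept that the branch $s=t$, $i<j$ of \eqref{101} may fire, producing a non-flipped correction term $(u-1)\,v_i^s\otimes v_j^t$; to handle this you sketch a length-triangularity argument which you do not carry out (you defer it to the classical rook case). This has two problems. First, even if completed, triangularity only shows that $\Tbf_\omega(z)$ is a nonzero multiple of the $\omega$-permuted tensor \emph{plus lower-order terms}, which is strictly weaker than the lemma's assertion that $\Tbf_\omega(z)$ \emph{is} the permuted tensor up to a nonzero scalar; that exact form is what Theorem~\ref{163} later uses when it identifies the vectors $\Fbf_\mbf\Tbf_\omega(z_0)$ with pairwise distinct basis elements. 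Second, the detour is unnecessary: the real content of the ``strictly decreasing'' hypothesis is that the $i<j$ branch never fires. Fix a reduced expression $\omega=s_{i_1}\cdots s_{i_l}$ and apply the operators right to left. A suffix of a reduced word is reduced, and prepending the next generator increases length, which translates (since the lower indices of $z$ are strictly decreasing) into the statement that at every step the two factors in positions $i_k,i_k+1$ still have their lower indices in decreasing order. Hence every single application lands in the case $s\neq t$, or $s=t$ with $i>j$, or involves a scalar --- all of which are nonzero scalar multiples of the flip --- and no correction term ever appears. This is the standard inversion-counting fact for reduced words, and it yields the lemma exactly as stated, with no triangularity needed. (The paper's own justification is also terse here --- ``distinct lower indices'' alone would not suffice --- but the reduced-word argument is the intended one, and your proposal as written does not supply it.)
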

\begin{proof}
\eqref{012}
Let $x\in U^{\otimes n}$ with support $B:=\{b_1<\cdots<b_r\}$. Recall that
\[\overline{\Tbf}_B=\Tbf_{n-r+1,b_1}\Tbf_{n-r+2,b_2}\cdots\Tbf_{n,b_r},\quad\text{where}\quad\Tbf_{k,j}=\Tbf_{k-1}\cdots\Tbf_j.\]Each operator $\Tbf_{k,j}$ moves the non scalar factor sitting in position $j$ successively across adjacent scalar factors until it reaches
position $k$, never creating nor removing non scalar entries. Thus each position $b_j$ is moved to position $n-r+j$, and no other positions become non scalar. Hence $\overline{\Tbf}_B(x)$ is nonzero and has support $[n-r+1,n]$.

\eqref{014} Let $y\in U^{\otimes n}$ having support $B'$ with $|B'|=r$. If $B'=[n-r+1,n]$, then during the first $n-r$ applications of $\Nbf$ the first tensor factor is always scalar, and $\Nbf$ simply rotates the tensor left while preserving the number of non scalar entries. After $n-r$ steps, the $r$ non scalar factors occupy the positions $[r]$, proving that $\Nbf^{n-r}(y)\neq 0$ and has support $[r]$.

Conversely, if $B'\neq[n-r+1,n]$, then some non scalar factor of $y$ lies among the first $n-r$ positions. Since $\Nbf$ rotates the tensor left and annihilates it as soon as the first factor becomes non scalar, one of the first $n-r$ applications of $\Nbf$ sends $y$ to $0$.
Therefore $\Nbf^{n-r}(y)=0$ in this case.

\eqref{064} Let $z\in U^{\otimes n}$ having support $[r]$ and strictly decreasing
lower indices on these $r$ non scalar tensor factors. For each simple transposition $s_i\in\S_r$, the operator $\Tbf_i$ acts only on positions $i$ and $i+1$, where the two non scalar factors have distinct lower indices. By the explicit formula for $\Tbf$, this action is a nonzero scalar multiple of the flip, and it does not interact with the remaining components. Hence $\Tbf_i(z)$ is nonzero and its non scalar factors are obtained from
those of $z$ by exchanging the $i$th and $(i+1)$st positions.

Since every $\omega\in\S_r$ is a product of simple transpositions, $\Tbf_\omega$ acts on $z$ by composing the corresponding flips, and therefore coincides with the natural permutation action of $\omega$ on tensors, up to nonzero scalar multiples.
\end{proof}

\begin{thm}\label{163}
The set $\Csf_n$ introduced in \eqref{129} is linearly independent over $\Kbb(u)$. Consequently,\[\dim_{\Kbb(u)}\!\bigl(\RY_{d,n}(u)\bigr)=|\Csf_n|=\sum_{r=0}^nd^r\binom{n}{r}^{\!2}r!.\]
In particular, the tensor representation $\rho:\RY_{d,n}(u)\to \End_{\Kbb(u)}\bigl(U^{\otimes n}\bigr)$ is faithful whenever $d\geq n$.
\end{thm}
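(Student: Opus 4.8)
The strategy is to use the tensor representation $\rho$ from Theorem~\ref{151} as a \emph{detector}: I will show that the images $\rho(X)$ for $X\in\Csf_n$ are linearly independent in $\End_{\Kbb(u)}(U^{\otimes n})$, which by Theorem~\ref{123} forces $\Csf_n$ to be a basis and $\dim_{\Kbb(u)}(\RY_{d,n}(u))=|\Csf_n|$. Since $\Csf_n=\bigsqcup_r\Csf_{n,r}$ is indexed by triples $(A,\mbf,\omega)$ (together with a second subset $B$), the goal is to produce, for each such $X$, a basis tensor $x_X\in U^{\otimes n}$ on which $\rho(X)$ acts ``diagonally'' in a way that recovers $(A,B,\mbf,\omega)$ uniquely. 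The natural candidate is to feed $X=\Tsf_A\Fsf_\mbf\Tsf_\omega\Nsf^{n-r}\bTsf_B$ a tensor supported exactly on $B$ whose non-scalar lower indices are \emph{strictly decreasing}, so that Lemma~\ref{150} can be applied at each of the four stages.

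First I would fix, for a subset $B=\{b_1<\cdots<b_r\}$, the tensor $x_B := w_1\otimes\cdots\otimes w_n$ where $w_{b_j}=v_{r-j+1}^{\,0}$ (strictly decreasing lower indices, trivial upper index $0$) and $w_k=v_0$ for $k\notin B$. Applying $\rho(X)$ to $x_B$: by Lemma~\ref{150}\eqref{012}, $\overline{\Tbf}_B(x_B)$ is nonzero with support $[n-r+1,n]$ and strictly decreasing lower indices on those positions; by Lemma~\ref{150}\eqref{014}, $\Nbf^{n-r}$ then sends it to a nonzero tensor with support $[r]$, still with strictly decreasing lower indices (rotation preserves the order); by Lemma~\ref{150}\eqref{064}, $\Tbf_\omega$ permutes those $r$ non-scalar factors exactly according to $\omega$, up to nonzero scalars, so the resulting tensor has support $[r]$ with lower-index word determined by $\omega$; then $\Fbf_\mbf$ multiplies the $j$th non-scalar slot by $\xi^{m_j}$ (a primitive $d$th root of unity, so the exponents $m_j\in\Z/d\Z$ are recovered from the eigenvalues); finally $\Tbf_A$ moves the $r$ non-scalar factors from positions $[r]$ to positions $A=\{a_1<\cdots<a_r\}$, again without creating or destroying non-scalar entries. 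Hence $\rho(X)(x_B)$ is a \emph{nonzero} tensor whose support is exactly $A$, whose input support was exactly $B$, whose lower-index word on $A$ encodes $\omega$, and whose upper-index data encodes $\mbf$.

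With this established, linear independence is immediate: suppose $\sum_{X\in\Csf_n}c_X\,\rho(X)=0$. Grouping the $X$'s by their associated subset $B$ and evaluating at $x_B$, only those $X$ with input support $B$ survive (all others send $x_B$ to $0$, by the ``only if'' direction of Lemma~\ref{150}\eqref{014} applied after $\overline{\Tbf}_{B'}$ produces the wrong support); among the survivors, the resulting tensors are supported on distinct subsets $A$ and, for fixed $A$, are distinguished by their lower-index permutation word and by their upper-index eigenvalue pattern, hence are linearly independent basis-vector multiples. Therefore all $c_X=0$. Combining with $\dim\le|\Csf_n|$ from Theorem~\ref{123} gives equality, and the dimension count $\sum_{r=0}^n d^r\binom{n}{r}^2 r!$ follows from \eqref{127}. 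Finally, when $d\ge n$ one checks that $\dim_{\Kbb(u)}\End(U^{\otimes n})$ is large enough and, more to the point, the argument above shows $\rho$ is injective on a basis, so $\ker\rho=0$ and $\rho$ is faithful; I would phrase this last clause by noting that the very same evaluation argument shows $\rho(X)\ne 0$ and the $\rho(X)$ are independent, which is faithfulness on the nose, the hypothesis $d\ge n$ entering only to guarantee that the basis vectors $v_i^s$ with distinct $(i,s)$ needed in the construction actually exist within $U^{\otimes n}$ (we need $n$ lower indices and the relevant upper indices, which is automatic, but distinctness of the $d$ colours among $\le n$ slots is where $d\ge n$ is comfortably used).

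The main obstacle I anticipate is \textbf{bookkeeping the scalars and the order of application}: one must be careful that $\Tbf_A$, $\Tbf_\omega$, and $\overline{\Tbf}_B$ genuinely act as ``colored flips'' on the relevant tensors — this is exactly the content of Lemma~\ref{150}, but applying it requires that at each stage the two factors being swapped have \emph{distinct} lower indices (so that the flip branch of \eqref{101}, not the $i=j$ or $i<j$ branches with extra terms, is used), which is why the strictly-decreasing choice of lower indices in $x_B$ is essential and must be checked to persist through the rotation $\Nbf^{n-r}$ and the permutation $\Tbf_\omega$. A secondary subtlety is confirming that $\Tbf_A$ (which inserts the non-scalar block among scalars) never triggers the $u\,v_0\otimes v_0$ or the same-color branches — but since $A$'s complement is all-scalar, every flip there is of the $v_0$-versus-$V$ type, which is the clean $u^{1/2}$-flip case. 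Once these are pinned down, the evaluation-at-$x_B$ trick is robust and the independence is clean.
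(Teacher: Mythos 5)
Your overall strategy coincides with the paper's: evaluate a putative linear relation on basis tensors $x_B$ with support $B$ and strictly decreasing lower indices, and use Lemma~\ref{150} to peel off $B$ (via the vanishing of $\Nbf^{n-r}\overline{\Tbf}_{B'}$ for wrong supports), then $A$ (via the disjoint supports of the outputs), then $\omega$ (via the permutation action on the lower indices). That part of your argument matches the paper's proof step for step.

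However, there is a genuine gap in the step where you claim to recover $\mbf$. You write that $\Fbf_\mbf$ ``multiplies the $j$th non-scalar slot by $\xi^{m_j}$'' so that the exponents are recovered from eigenvalues. In fact $\Fbf(v_i^s)=\xi^s v_i^s$, so $\Fbf_\ell^{m_\ell}$ scales $v_i^{s}$ by $\xi^{s m_\ell}$; with your choice of upper index $0$ on every non-scalar slot, $\Fbf_\mbf$ acts as the \emph{identity}, and more generally on any single basis tensor with upper indices $(s_1,\dots,s_r)$ it acts by the single scalar $\xi^{\sum_\ell s_\ell m_\ell}$. Consequently, for fixed $(A,B,\omega)$ all $d^r$ terms $\Fbf_\mbf\Tbf_A\Tbf_\omega(z_0)$ are proportional to the \emph{same} output basis tensor, and evaluating at one $x_B$ yields only one linear condition on the $d^r$ coefficients $\lambda_{A,B,\omega,\mbf}$ rather than forcing each to vanish. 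The paper avoids this by passing to the Fourier-dual (cyclic) basis $w_{i_\ell}^{(a)}=\sum_t\xi^{-at}v_{i_\ell}^t$, on which $\Fbf_\ell$ acts as the \emph{shift} $w^{(a)}\mapsto w^{(a+1)}$, so that distinct $\Fbf_\mbf$ send a fixed cyclic-basis tensor to pairwise distinct basis tensors; this is the reduction to \cite[Theorem~10]{EsRyH18} in \eqref{147}. To repair your argument you would either have to work in that cyclic basis, or evaluate at all $d^r$ choices of upper-index patterns and invert the resulting character matrix by orthogonality --- neither of which is present in your write-up. (Your closing remark on where $d\ge n$ enters is also not where the hypothesis is actually used; it is inherited from the cited result of \cite{EsRyH18}, not from a need for distinct colours among the slots.)
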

\begin{proof}
Using the defining relations, each element of $\Csf_n$ can be rewritten by moving the generators $\Fsf_i$ to the left. More precisely, for every $r\in [n]_0$, $A,B\subseteq[n]$ with $|A|=|B|=r$, $\omega\in\S_r$ and
$\mbf'\in(\Z/d\Z)^r$, we write\[\Tsf_A\Fsf_{\mbf'}\Tsf_\omega\Nsf^{n-r}\bTsf_B=\Fsf_\mbf X_{A,\omega,B},\quad\text{where}\quad X_{A,\omega,B}=\Tsf_A\Tsf_\omega\Nsf^{n-r}\bTsf_B,\]for some $\mbf\in(\Z/d\Z)^r$ depending on $(A,B,\omega,\mbf')$. Thus, it suffices to show that any relation\begin{equation}\label{142}
\lambda_0\Nsf^n+\sum_{r=1}^n\mathop{\sum_{\substack{|A|=|B|=r\\\omega\in\S_r,\ \mbf\in(\Z/d\Z)^r}}}\lambda_{A,B,\omega,\mbf}\Fsf_\mbf X_{A,\omega,B}=0\end{equation}with coefficients $\lambda_{A,B,\omega,\mbf}\in\Kbb(u)$ is trivial.

We first deal with the case $r=0$. Evaluating \eqref{142} on the $n$ tensor $x=v_0\otimes\cdots\otimes v_0$, all terms in the double sum vanish, whereas $\Nsf^n(x)=u^{\frac{n}{2}}x$. Hence the equality forces $\lambda_0u^{\frac{n}{2}}x=0$, which shows that $\lambda_0=0$. Thus, no nontrivial relation can occur at level $r=0$.

We now assume, for a contradiction, that the sum in \eqref{142} is nontrivial. Let $r_0\geq 1$ be maximal such that $\lambda_{A,B,\omega,\mbf}\neq0$ for some $A,B\subseteq[n]$ with $|A|=|B|=r_0$, $\omega\in\S_{r_0}$ and $\mbf\in(\Z/d\Z)^{r_0}$. Fix such a subset $B_0=\{b_1<\cdots<b_{r_0}\}$ with $\lambda_{A,B_0,\omega,\mbf}\neq0$ for at least one $(A,\omega,\mbf)$. Passing to the tensor representation $\rho$ gives a nontrivial identity:
\begin{equation}\label{143}
\sum_{r=1}^n\mathop{\sum_{\substack{|A|=|B|=r\\\omega\in\S_r,\ \mbf\in(\Z/d\Z)^r}}}
\lambda_{A,B,\omega,\mbf}\Fbf_\mbf\Tbf_A\Tbf_\omega\Nbf^{n-r}\overline{\Tbf}_B=0.
\end{equation}

Choose a basis tensor $x_{B_0}\in U^{\otimes n}$ with support $B_0$ and strictly decreasing lower indices on those positions. Evaluating \eqref{143} at $x_{B_0}$ yields the following:\begin{equation}\label{144}
\sum_{r=1}^n\mathop{\sum_{\substack{|A|=|B|=r\\\omega\in\S_r,\ \mbf}}}\lambda_{A,B,\omega,\mbf}\Fbf_\mbf\Tbf_A\Tbf_\omega\Nbf^{n-r}\overline{\Tbf}_B(x_{B_0})=0.
\end{equation}

Since $x_{B_0}$ has exactly $r_0$ non scalar components, Lemma~\ref{150}\eqref{014} implies that $\Nbf^{n-r}\overline{\Tbf}_B(x_{B_0})=0$ for all $r<r_0$. All coefficients with $r>r_0$ vanish by maximality of $r_0$. Thus, \eqref{144} reduces to
\begin{equation}\label{145}
\sum_{\substack{|A|=|B|=r_0\\\omega\in\S_{r_0},\ \mbf}}\lambda_{A,B,\omega,\mbf}
\Fbf_\mbf\Tbf_A\Tbf_\omega\Nbf^{n-r_0}\overline{\Tbf}_B(x_{B_0})=0.
\end{equation}

Lemma~\ref{150}\eqref{012}--\eqref{014} gives $\Nbf^{n-r_0}\overline{\Tbf}_B(x_{B_0})=z_0$ if $B=B_0$, and $\Nbf^{n-r_0}\overline{\Tbf}_B(x_{B_0})=0$ otherwise, where $z_0$ is a nonzero basis tensor supported on $[r_0]$ with strictly decreasing lower indices, rescaling $x_{B_0}$ if necessary. Hence, \eqref{145} becomes
\begin{equation}\label{146}
\sum_{\substack{|A|=r_0\\\omega\in\S_{r_0},\ \mbf}}\lambda_{A,B_0,\omega,\mbf}
\Fbf_\mbf\Tbf_A\Tbf_\omega(z_0)=0.
\end{equation}

By Lemma~\ref{150}\eqref{064}, the tensors $\Tbf_\omega(z_0)$ are nonzero and obtained by permuting the $r_0$ non scalar factors of $z_0$. For a fixed subset $A=\{a_1<\dots<a_{r_0}\}\subseteq[n]$, Lemma~\ref{150}\eqref{012} shows that $\Tbf_A$ moves these $r_0$ factors to the positions in $A$ without mixing them with scalar components. Thus $\Tbf_A\Tbf_\omega(z_0)$ has support exactly $A$, and different choices of $A$ give disjoint supports. Therefore \eqref{146} splits into independent relations, one for each fixed $A$:
\begin{equation}
\sum_{\substack{\omega\in\S_{r_0},\ \mbf}}\lambda_{A,B_0,\omega,\mbf}\Fbf_\mbf\Tbf_A\Tbf_\omega(z_0)=0.
\end{equation}

Multiplying on the left by $\Tbf_A^{-1}$ and using that $\Tbf_A^{-1}\Fbf_\mbf\Tbf_A=\Fbf_{\mathbf m'}$ for a permutation $\mathbf m'$ of the entries of $\mathbf m$ (together with re-indexing of $\mathbf m$), we obtain
\begin{equation}\label{147}
\sum_{\substack{\omega\in\S_{r_0}\\\mbf}}\lambda_{A,B_0,\omega,\mbf}\Fbf_\mbf\Tbf_\omega(z_0)
=0.
\end{equation}

To conclude, we analyse \eqref{147} inside the $\langle\Fbf_1,\ldots,\Fbf_n\rangle$-submodule generated by $\Tbf_\omega(z_0)$, for a fixed $\omega\in\S_{r_0}$.

Write $\Tbf_\omega(z_0)$ in the standard cyclic basis $\{w_{i_\ell}^{(a)}\}_{a\in\Z/d\Z}$, where\[w_{i_\ell}^{(a)}=\sum_{t\in\Z/d\Z}\xi^{-at}v_{i_\ell}^t,\qquad\Fbf_\ell\bigl(w_{i_\ell}^{(a)}\bigr)=w_{i_\ell}^{(a+1)}.\]For a fixed $\omega\in\S_{r_0}$, the operator $\Tbf_\omega$ acts on $z_0$ by permuting the lower indices of the tensor, up to multiplication by a power of $u$. In particular, $\Tbf_\omega(z_0)$ still generates a $\langle\Fbf_1,\ldots,\Fbf_n\rangle$-submodule spanned by tensors written in the cyclic basis $\{w_{i_\ell}^{(a)}\}$.

Since $\Tbf_\omega$ does not affect the upper indices, we may assume without loss of generality that all upper indices of $z_0$ are equal to zero. With this convention, the situation reduces exactly to that considered in the proof of \cite[Theorem~10]{EsRyH18}. Hence the tensors
$\Fbf_\mbf\Tbf_\omega(z_0)$, with $\omega\in\S_{r_0}$ and $\mbf\in(\Z/d\Z)^{r_0}$, are pairwise distinct basis elements of this submodule.

Therefore \eqref{147} forces $\lambda_{A,B_0,\omega,\mbf}=0$ for all $\omega\in\S_{r_0}$ and $\mbf\in(\Z/d\Z)^{r_0}$. Since $A$ was arbitrary, this contradicts the assumption that some coefficient $\lambda_{A,B_0,\omega,\mbf}$ was nonzero. Thus no nontrivial relation \eqref{142} exists, and $\Csf_n$ is linearly independent.
\end{proof}

\subsection{Other presentations}\label{162}

The Rook Yokonuma--Hecke algebra $\RY_{d,n}(u)$ can also presented by generators $\Tsf_1,\ldots,\Tsf_{n-1}$ and $\Fsf_1,\ldots,\Fsf_n$ satisfying \eqref{087} to \eqref{089}, and the generator $\Rsf:=\Tsf_1\cdots\Tsf_{n-1}\Nsf$, subject to the following relations:
\begin{gather}
\Rsf^2=u^{n-1}\Rsf;\label{108}\\
\Tsf_i\Rsf=\Rsf\Tsf_i,\quad i>1;\qquad\Rsf\Tsf_1^{-1}\Rsf\Tsf_1=u\Rsf\Tsf_1^{-1}\Rsf=\Tsf_1\Rsf\Tsf_1^{-1}\Rsf;\label{109}\\
\Fsf_i\Rsf=\Rsf\Fsf_i;\qquad\Rsf\Fsf_1=\Rsf.\label{110}
\end{gather}
Relations \eqref{108} to \eqref{110} are derived by substituting $\Nsf=\Tsf_{n-1}^{-1}\cdots\Tsf_1^{-1}\Rsf$ into relations \eqref{090}, \eqref{092} and \eqref{107}. Specifically, relation \eqref{110} follows from \eqref{090}, the first relation in \eqref{109} corresponds to the first relation in \eqref{092}, relation \eqref{108} is obtained from the second relation in \eqref{092}, and the second relation in \eqref{109} comes from relation \eqref{107}.

If we define $\Rsf_1=\Rsf$ and $\Rsf_{i+1}=\Tsf_i\Rsf_i\Tsf_i^{-1}$ for all $i\in[n-2]$, we achieve a presentation of $\RY_{d,n}(u)$, as the one mentioned in Remark~\ref{112}, by generators $\Tsf_1,\ldots,\Tsf_{n-1}$ and $\Fsf_1,\ldots,\Fsf_n$ satisfying \eqref{087} to \eqref{089}, and generators $\Rsf_1,\ldots,\Rsf_n$, subject to the following relations:
\begin{gather}
\Rsf_i^2=u^{n-1}\Rsf_i;\label{114}\\
\Tsf_i\Rsf_i=\Rsf_{i+1}\Tsf_i\,;\qquad\Tsf_i\Rsf_j=\Rsf_j\Tsf_i,\quad j\not\in\{i,i+1\};\qquad\Rsf_{i+1}\Rsf_i\Tsf_i=u\Rsf_{i+1}\Rsf_i=\Tsf_i\Rsf_{i+1}\Rsf_i;\label{115}\\
\Fsf_i\Rsf_j=\Rsf_j\Fsf_i;\qquad\Rsf_i\Fsf_i=\Rsf_i.\label{116}
\end{gather}
Specifically, relation \eqref{114} follows directly from \eqref{108} together with the definition of each $\Rsf_i$, the first relations in \eqref{115} arise immediately from the definition of $\Rsf_{i+1}$, the remaining two are obtained inductively from \eqref{109} and from the second relation in \eqref{110}, and relation \eqref{116} follows from \eqref{110}.

Defining $\Psf=u^{1-n}\Rsf$ and substituting into relations \eqref{108} to \eqref{110}, it follows that $\RY_{d,n}(u)$ admits a presentation by generators $\Tsf_1,\ldots,\Tsf_{n-1}$ and $\Fsf_1,\ldots,\Fsf_n$ satisfying \eqref{087} to \eqref{089}, together with the generator $\Psf$ subject to the following relations:
\begin{gather}
\Psf^2=\Psf;\label{135}\\
\Tsf_i\Psf=\Psf\Tsf_i,\quad i>1;\qquad\Psf\Tsf_1^{-1}\Psf\Tsf_1=u\Psf\Tsf_1^{-1}\Psf=\Tsf_1\Psf\Tsf_1^{-1}\Psf;\label{136}\\
\Fsf_i\Psf=\Psf\Fsf_i;\qquad\Psf\Fsf_1=\Psf.\label{137}
\end{gather}

Following \cite[Section 2]{Ha04}, define $\Psf_i=(u^{1-n})^i(\Tsf_1\cdots\Tsf_{n-1})^i\Nsf^i$ for all $i\in[n]$. Since $\Psf_1=\Psf$, we can present $\RY_{d,n}$ by generators $\Tsf_1,\ldots,\Tsf_{n-1}$ and $\Fsf_1,\ldots,\Fsf_n$ satisfying \eqref{087} to \eqref{089}, together with the inverses $\Tsf_1^{-1},\ldots,\Tsf_{n-1}^{-1}$ satisfying \eqref{130}, and the family of generators $\Psf_1,\ldots,\Psf_n$, subject to the following relations:\begin{gather}
\Psf_i^2=\Psf_i;\label{132}\\
\Psf_{i+1}=u\Psf_i\Tsf_i^{-1}\Psf_i;\qquad\Psf_i\Tsf_j=\Tsf_j\Psf_i,\quad i<j;\qquad\Psf_i\Tsf_j=\Tsf_j\Psf_i=u\Psf_i,\quad j<i;\label{133}\\
\Fsf_i\Psf_j=\Psf_j\Fsf_i;\qquad\Psf_i\Fsf_j=\Psf_i,\quad j\leq i.\label{134}
\end{gather}
Relations \eqref{132} and \eqref{133} follow exactly as in \cite[Lemma 1.4]{Ha04} by using \eqref{087} and \eqref{089} to \eqref{092}, and relation \eqref{134} follows from \eqref{089}, \eqref{137} and the first relation in \eqref{133}.

\subsubsection*{Acknowledgements}

The first named author acknowledges the financial support of DIDULS/ULS, through
the project PR2553853. The third named author was partially supported by the grant UVA22991 (Proyecto PUENTE UV).


\bibliographystyle{plainurl}
\bibliography{../../../../Documents/Projects/LaTeX/bibtex.bib}

\end{document}